\setlist[description]{leftmargin=2em}
\newtheorem{thm}{Theorem}[subsection]
\newtheorem{cor}[thm]{Corollary}
\newtheorem{lem}[thm]{Lemma}
\newtheorem{prop}[thm]{Proposition}
\theoremstyle{definition}
\newtheorem{defn}[thm]{Definition}
\theoremstyle{remark}
\newtheorem{rem}[thm]{Remark}
\numberwithin{equation}{section}
\DeclareMathOperator{\Frac}{Frac}
\DeclareMathOperator{\Hom}{\mathrm{Hom}}
\DeclareMathOperator{\End}{\mathrm{End}}
\DeclareMathOperator{\Gal}{{Gal}}
\DeclareMathOperator{\Sel}{Sel}
\DeclareMathOperator{\Ext}{Ext}
\newcommand{\ord}{\mathrm{ord}}
\newcommand{\spl}{\mathrm{spl}}
\DeclareMathOperator{\Res}{Res}
\DeclareMathOperator{\Ind}{Ind}
\DeclareMathOperator{\rank}{rank}
\DeclareMathOperator{\Frob}{Frob}
\newcommand{\GL}{\mathrm{GL}}
\DeclareMathOperator{\Spec}{Spec}
\newcommand{\id}{\mathrm{id}}
\newcommand{\cC}{{\mathcal C}}
\newcommand{\cE}{{\mathcal E}}
\newcommand{\cF}{{\mathcal F}}
\newcommand{\cG}{{\mathcal G}}
\newcommand{\cI}{{\mathcal I}}
\newcommand{\cK}{{\mathcal K}}
\newcommand{\cL}{{\mathcal L}}
\newcommand{\cM}{{\mathcal M}}
\newcommand{\cO}{{\mathcal O}}
\newcommand{\cU}{{\mathcal U}}
\newcommand{\cX}{{\mathcal X}}
\newcommand{\cY}{{\mathcal Y}}
\newcommand{\cZ}{{\mathcal Z}}
\newcommand{\frc}{{\mathfrak c}}
\newcommand{\fC}{{\mathfrak{C}}}
\newcommand{\frg}{{\mathfrak g}}
\newcommand{\frl}{{\mathfrak l}}
\newcommand{\frp}{{\mathfrak p}}
\newcommand{\frq}{{\mathfrak q}}
\newcommand{\frv}{{\mathfrak v}}
\newcommand{\fZ}{{\mathfrak{Z}}}
\newcommand{\bC}{{\mathbb C}}
\newcommand{\F}{{\mathbb F}}
\newcommand{\bI}{{\mathbb I}}
\newcommand{\bQ}{{\mathbb Q}}
\newcommand{\bT}{{\mathbb T}}
\newcommand{\bZ}{{\mathbb Z}}
\newcommand{\sW}{{\mathfrak W}}
\newcommand{\frL}{{\mathfrak L}}
\newcommand{\Q}{{\mathbb Q}}
\newcommand{\Z}{{\mathbb Z}}
\newcommand{\wt}[1]{\ensuremath{\widetilde{#1}}}
\newcommand{\ra}{\rightarrow}
\newcommand{\lra}{\longrightarrow}
\newcommand{\lrisom}{\buildrel\sim\over\lra}
\newcommand{\risom}{\buildrel\sim\over\ra}
\newcommand{\rinj}{\hookrightarrow}
\newcommand{\rsurj}{\twoheadrightarrow}
\newcommand{\sm}[4]{\ensuremath{\big(\begin{smallmatrix}#1 & #2 \\ #3 & #4\end{smallmatrix}\big)}}
\newcommand{\til}{\widetilde}
\newcommand{\lb}{{[\![}}
\newcommand{\rb}{{]\!]}}
\newcommand{\CM}{\mathrm{CM}}
\newcommand{\dia}{{\langle-\rangle}}
\newcommand{\lr}[1]{{\langle{#1}\rangle}}
\newcommand{\m}{\mathfrak{m}}
\newcommand{\p}{\mathfrak{p}}
\newcommand{\oQ}{\overline{\Q}}
\newcommand{\ttmat}[4]{\left( \begin{array}{cc}
#1 & #2 \\
#3 & #4
\end{array}
\right)}
\let\c@equation\c@thm
\numberwithin{equation}{subsection}
\title{Class groups and local indecomposability for non-CM forms}
\author{Francesc Castella}
\address{Department of Mathematics, Princeton University\\
    Princeton, NJ 08544, USA}
\email{fcabello@math.princeton.edu}
\author{Carl Wang-Erickson}
\address{Department of Mathematics, Imperial College London \\
	London SW7 2AZ, UK}
\email{c.wang-erickson@imperial.ac.uk}
\address{Department of Mathematics, UCLA, Los Angeles, CA 90095-1555, USA}
\email{hida@math.ucla.edu}
\begin{document}



\maketitle
\vspace{-5.mm}
\begin{center}
	\footnotesize{(with an appendix by HARUZO HIDA)}
\end{center}

\begin{abstract}
In the late 1990s, R.~Coleman and R.~Greenberg (independently) asked for a global property characterizing those $p$-ordinary cuspidal eigenforms whose associated Galois representation becomes decomposable upon restriction to a decomposition group at $p$. It is expected that such $p$-ordinary eigenforms are precisely those with complex multiplication. 

In this paper, we study Coleman--Greenberg's question using Galois deformation theory. In particular, for $p$-ordinary eigenforms which are congruent to one with complex multiplication, we prove that the conjectured answer follows from the $p$-indivisibility of a certain class group.  
\end{abstract}

\tableofcontents


\section{Introduction}
\label{sec: intro}

\subsection{Overview}
\label{subsec: OV}

As recorded in \cite[Question 1]{GV2004}, R.~Greenberg has asked when the $2$-dimensional $p$-adic Galois representation $\rho_f$ of $\Gal(\oQ/\Q)$ attached to a $p$-ordinary cuspidal eigenform $f$ of weight $k \geq 2$ has the property of being $p$-locally split, i.e.\ its restriction to a decomposition group $\Gal(\oQ_p/\Q_p)$ at $p$ is isomorphic to the sum of two characters. An equivalent form of this question, which appears to be a very subtle problem in the $p$-adic theory of modular forms, was independently raised by R.~Coleman  \cite[Remark 2, pg.\ 232]{coleman-classical}.\footnote{See \cite[Theorem~4.3.3,  Theorem~4.4.8]{Breuil-Emerton} for the equivalence between the two formulations.}

One easily sees that  $p$-ordinary eigenforms with complex multiplication have this property, and the converse is expected to hold, i.e. (see \cite[Conj.\ (0.1)]{emerton2004}):
\begin{equation}
\label{CG}
\textrm{$\rho_f\vert_{\Gal(\oQ_p/\Q_p)}$ is split}\quad\overset{?}\Longrightarrow\quad\textrm{$f$ has complex multiplication.}\tag{CG}
\end{equation}

Let $\bQ(f)\subset\mathbb{C}$ be the Hecke field of $f$. The Galois representation $\rho_f$ is valued in ${\rm GL}_2(E)$, where $E$ is the completion of $\bQ(f)$ at a prime $v$ above $p$. Serre \cite{serre1968} established (CG) when $k=2$ and $\bQ(f)=\bQ$ using Serre--Tate deformation theory. Still in weight $2$, Serre's argument was extended independently by Emerton \cite{emerton2004} and Ghate \cite{ghate2004} provided $\rho_f$ is ordinary and $p$-split for \emph{all} primes $v$ of $\bQ(f)$ above $p$ (we then say that $\rho_f$ is \emph{totally} $p$-split); the general weight $2$ case was recently established by Zhao \cite{zhao} building on Hida's breakthrough \cite{hidaJAMS2013}. For weights $k>2$, Emerton \cite{emerton2004} showed  that (CG) follows from a $p$-adic analogue of Grothendieck's variational Hodge conjecture, provided $\rho_f$ is totally $p$-split. In a different direction, building on modularity lifting results \cite{Buzzard-Taylor, Buzzard-JAMS} in weight $1$, Ghate--Vatsal \cite{GV2004} showed under mild hypotheses that (CG) holds for all but finitely many $p$-ordinary eigenforms in any single Hida family.

The main result of this paper is Theorem \ref{thm:main}, which gives a sufficient condition for (\ref{CG}) to hold for all forms in a fixed congruence class $\bar f$, allowing for any $p$-adic weight. This condition is that a certain quotient $X$ (later denoted $X(\psi^-)$) of the $p$-part of the class group of the number field cut out by the associated mod $p$ Galois representation $\bar\rho_f$ is zero. Such an $X$ can be associated to any congruence class that contains some member with complex multiplication; we impose only mild additional assumptions. We list some examples of vanishing $X$ in \S\ref{subsec: eg}. 

Greenberg's pseudo-nullity conjecture \cite[Conj.\ (3.5)]{greenberg2001} suggests that a certain Iwasawa-theoretic class group $X^-_\infty$ (later denoted $X^-_\infty(\psi^-)$), which surjects onto $X$, has finite cardinality. To illustrate the influence of $X_\infty^-$, under an extra assumption, we prove in Theorem \ref{thm:main2} that the finiteness of $X_\infty^-$ can be used to produce another proof of the main result of \cite{GV2004} for the class of $\bar\rho_f$ we consider in this paper. 

It is natural to ask whether there exist converse arguments establishing the finiteness of $X^-_\infty$. Thus we give modular characterizations of the vanishing of $X_\infty^-$ (Theorem~\ref{thm: equiv main}) and its finiteness (Theorem~\ref{thm: equiv main2}). 

\subsection{Setup}
\label{subsec: setup}

In order to state question (CG) and the main result of this paper precisely, we introduce the objects of study. Here $G_F$ denotes an absolute Galois group of a field $F$, $O_F$ denotes the appropriate standard integer ring of $F$, and ``CM'' is short for ``complex multiplication.'' Let $p$ be a prime (later, $p \geq 5$). 

\subsubsection{The question}
\label{sssec: the question} 

We fix embeddings of algebraically closed fields $\oQ \rinj \oQ_q$ for all primes $q$, and $\oQ \rinj \bC$. These embeddings give rise to a choice of $q$-adic valuation on any algebraic complex number. They also determine a choice of decomposition group $G_q := \Gal(\oQ_q/\Q_q) \rinj G_\Q$ and complex conjugation $c \in G_\Q$. We write $I_q \subset G_q$ for the inertia subgroup. 

Choose a classical normalized cuspidal Hecke newform $f'$ of weight $k \geq 2$ and level $N' \geq 1$. If $p \nmid N'$, let $f$ be a $p$-stabilization of $f'$ of level $\Gamma_0(p) \cap \Gamma_1(N')$; otherwise, let $f = f'$. Thus $f$ is an eigenvector for the $U_p$-operator. Let 
\[
f=\sum_{n\geq 1}a_n(f)q^n
\] 
be the $q$-expansion of $f$ at the cusp $\infty$, write $\Q(f)/\Q$ for the subfield of $\bC$ generated by the coefficients (also the Hecke eigenvalues) $a_n(f)$, and write $v = v_f$ for the prime of $\Q(f)$ over $p$ that is distinguished by the embeddings above. We call $f$ \emph{$p$-ordinary} when its $U_p$-eigenvalue $a_p(f) \in \bC$, which is known to be an algebraic integer, is a $p$-adic unit. 

There is attached to $f$ an absolutely irreducible $p$-adic Galois representation 
\begin{equation}
\label{eq: assoc Gal rep}
\rho_f : G_\Q \lra \GL_2(\Q(f)_v)
\end{equation}
characterized by the property that
\begin{equation}\label{eq:tr-Frob}
\textrm{trace}\;\rho_f(\Frob_q) = a_q(f)\quad
\textrm{for all primes $q \nmid N'p$},
\end{equation}
where $\Frob_q \in G_\Q$ is a choice of arithmetic Frobenius element at $q$. It is known that $f$ is $p$-ordinary if and only if $\rho_f\vert_{G_p}$ admits a 1-dimensional unramified quotient with $\Frob_p$-eigenvalue $a_p(f)$. 

We call such a representation of $G_\Q$, when equipped with the $\Frob_p$-eigenvalue, \emph{$p$-ordinary}. Similarly, we call a representation $\rho$ of $G_\Q$ \emph{$p$-locally split} when, in addition, $\rho\vert_{G_p}$ is isomorphic to the direct sum of two characters. We ask the question recorded in \S\ref{subsec: OV}: when $k \geq 2$, what property of $f$ determines whether $\rho_f$ is $p$-locally split? 

As discussed above, the proposal, denoted (CG), is that such $f$ have \emph{CM}. While there exist representation-theoretic notions of CM that are arguably more encompassing, we give the simplest equivalent definition: $f$ is called CM when there exists an imaginary quadratic field $K/\Q$ such that the attached quadratic Dirichlet character $\bigl(\frac{K/\Q}{\cdot}\bigr)$ satisfies 
\begin{equation}
\label{eq: CM condition}
\qquad  a_n(f) \bigl(\textstyle{\frac{K/\Q}{n}}\bigr) = a_n(f), \quad\textrm{for almost all $n \geq 1$} \quad \text{ (the CM condition)}. 
\end{equation}

\subsubsection{Fixing the congruence class}
\label{sssec: fix cong class}

It is natural to study (CG) over one congruence class of eigenforms modulo $p$ at a time. Let $\F$ be a finite field of characteristic $p$. Let $\bar f \in \F\lb q\rb$ be the reduction modulo $v_f$ of the $f \in O_{\Q(f)}\lb q\rb$ that we designated above. Let 
\[
\bar\rho := (\rho_f\;{\rm mod}\;{v_f}) : G_\Q \lra \GL_2(\F)
\] 
be the associated representation. The Hecke eigenvalues of $\bar f$ are determined by $\bar\rho$ similarly to \eqref{eq:tr-Frob}. Since $f$ is a $p$-ordinary eigenform, we know that 
\begin{enumerate}
	\item[(1'')] $\bar\rho$ is odd and $\bar\rho\vert_{G_p}$ admits an unramified quotient with $\Frob_p$-eigenvalue $\bar\alpha_p := a_p(\bar f)$. 
\end{enumerate}

Let $N \geq 1$ denote the tame level of $\bar f$, which equals the (prime-to-$p$) Artin conductor of $\bar\rho$. While in general $N$ divides the prime-to-$p$ part $N'_{(p)}$ of $N'$, in this paper we address $f$ that are \emph{minimal}, that is, $N = N'_{(p)}$. 

Because question (CG) addresses $p$-ordinary eigenforms $f$ such that $\rho_f \vert_{G_p}$ splits, \cite[Prop.\ 6]{ghate2005} ensures that in the presence of (2') and (3') below, we may replace (1'') with the more restrictive assumption 
\begin{enumerate}
	\item[(1')] $\bar\rho$ is odd and $\bar\rho\vert_{G_p} \simeq \bar\chi_1 \oplus \bar\chi_2$, where $\bar\chi_2$ is unramified and $\bar\chi_2(\Frob_p) = \bar\alpha_p$.
\end{enumerate}

Our results on (CG) rely on conditions that imply that all Galois representations that give rise to  $\bar\rho$ arise from Hecke eigenforms, i.e.\ ``$R = \bT$.'' Such $R = \bT$-type results are subject to the following assumptions, when $p$ is odd. 
\begin{enumerate}
	\item[(2')] $\bar\chi_1 \neq \bar\chi_2$, which is known as the residually $p$-distinguished condition on $\bar\rho$.
	\item[(3')] $\bar\rho\vert_{G_M}$ is absolutely irreducible, where $M = \Q(\sqrt{(-1)^{(p-1)/2}p})$.
\end{enumerate}

\subsubsection{The residually CM $p$-ordinary setting}
\label{sssec: resid CM}

The following (0)--(4) are the assumptions we work under for the results of this paper. 
\begin{enumerate}
	\item[(0)] $p \geq 5$ and $\bar f$ has CM, in the sense of \eqref{eq: CM condition}. 
\end{enumerate}
It follows that there exists an imaginary quadratic field $K/\Q$ and a character
	\[
	\bar{\psi} : G_K \lra \F^\times \quad \text{ such that } 	\bar{\rho}\cong \Ind_K^\Q \bar{\psi}.
	\]
Let $\psi : G_K \ra W(\F)^\times$ denote the Teichm\"uller lift of $\bar\psi$, let $\frc' \subset O_K$ denote the conductor of $\psi$, and let $\frc \subset O_K$ be the (prime-to-$p$) Artin conductor of $\bar\psi$. Recalling the complex conjugation $c \in G_\Q$ established above, the anti-cyclotomic character associated to $\bar\psi$ is 
\[
\bar{\psi}^- := \bar\psi \cdot (\bar\psi^c)^{-1},
\]
where $\bar\psi^c(\gamma)$ denotes $\bar\psi(c\gamma c)$. 

Having assumed (0), assumptions (1')--(3') are implied (respectively) by
\begin{enumerate}
	\item $p$ splits in $K$, i.e.\ 
	\[
	p \cO_K = \p \p^\ast, 
	\]
	where $\p$ is the prime distinguished by our fixed embedding $\oQ \rinj \oQ_p$, and, also, $\bar\psi$ is unramified at $\p^\ast$ with $\bar\psi(\Frob_{\p^\ast}) = \bar\alpha_p$. One may then check that $N = \mathrm{Norm}_{K/\Q}(\frc)\vert\mathrm{Disc}(K)\vert$. 
	\item $\bar\psi^-\vert_{G_\p}$ is non-trivial and $v_\p(\frc') \leq 1$.
	\item $\bar\psi^-$ has order at least $3$. 
\end{enumerate} 
(For $(3)\Rightarrow\;$(3'), see \cite[Prop.~5.2(2)]{hida2015}.) Finally, we impose the following mild assumption.
\begin{enumerate}[resume]
	\item $\frc + \frc^c = O_K$. 
\end{enumerate}

\subsection{Results, Part I}
\label{subsec: results}

Our first main result addresses the representation $\rho_g : G_\Q \ra \GL_2(\oQ_p)$ attached to a normalized $p$-ordinary $p$-adic eigenform $g \in \overline{\Z}_p\lb q\rb$ that has tame level $N$, arbitrary $p$-adic weight, and a congruence with $\bar f$. We refer to whether $g$ has CM by the same definition \eqref{eq: CM condition}, which makes sense for any $p$-adic weight. 

The theorems in this section are subject to a condition on the following ideal class group. 
Let $\psi^-:G_K\rightarrow W^\times$ denote the Teichm\"uller lift of $\bar\psi^-$ to the Witt vector ring $W=W(\F)$. Let $K(\psi^-)/K$ be the finite abelian extension cut out by $\psi^-$, and denote by $X(\psi^-)$ the $\psi^-$-isotypical component of the $p$-cotorsion of the ideal class group of $K(\psi^-)$. 

\begin{thm}
	\label{thm:main}
	Assume (0)--(4) of \S\ref{subsec: setup}. Let $g$ denote a $p$-ordinary $p$-adic eigenform of tame level $N$ and arbitrary $p$-adic weight that is congruent to $\bar f$. If $X(\psi^-) = 0$ and $\rho_g\vert_{G_p}$ is split, then $g$ has CM. 
\end{thm}

We apply the theorem to (CG). 
\begin{cor}
	\label{cor: CG}
	Assume (0)--(4) of \S\ref{subsec: setup}. If $X(\psi^-) = 0$, then (CG) is true when restricted to those eigenforms of level $N$ with a congruence with $\bar f$. 
\end{cor}

See \S\ref{subsec: eg} for explicit examples where (CG) is verified. 

\begin{rem}
	\label{rem: analytic X vanishing}
	The condition $X(\psi^-) = 0$ can be ensured analytically in some cases: it is implied by the anti-cyclotomic Katz $p$-adic $L$-function $L_p^-(\psi^-)^\ast$ in \S\ref{subsec: katz} being a unit (see e.g.\ \cite[Cor.~5.2.7]{BCGKPST2017}). We also note that the implication (CG) is trivial in the congruence class of $\bar f$ unless a different Katz $p$-adic $L$-function $L_p^-(\psi^-)$, also defined in \S\ref{subsec: katz}, is not a unit. Indeed, when $L_p^-(\psi^-)$ is a unit, any $g$ congruent to $\bar f$ has CM (see Theorem~\ref{thm: CM and non-CM components}). 
\end{rem}

In fact, we prove that the vanishing of $X(\psi^-)$ is \emph{equivalent} to a stronger form of the expected implication (CG). To formulate this, we refer to a modulo $p$ generalized eigenform $\bar g' \in \F\lb q\rb$ whose eigensystem equals that of $\bar f$. We specify these objects in \S\ref{subsec: weights and GE}, also explaining that such a $\bar g'$ induces a Galois representation 
\[
\rho_{\bar g'} : G_\Q \lra \GL_2(A_{\bar g'}),
\]
where $A_{\bar g'}$ is a finite-dimensional augmented $\F$-algebra, such that  
\[
(\rho_{\bar g'}\;{\rm mod}\;{\m_{A_{\bar g'}}}) \simeq \bar\rho \quad \text{ and } \quad \rho_{\bar g'} \not\simeq \bar{\rho} \otimes_\F A_{\bar g'}.
\]
We also explain that the conditions ``$p$-locally split'' and ``CM'' can be sensibly applied to such $\bar g'$. 
\begin{thm}
	\label{thm: equiv main}
	Assume (0)--(4) of \S\ref{subsec: setup}. The following conditions are equivalent. 
	\begin{enumerate}[label=(\roman*)]
		\item $X(\psi^-) \neq 0$.
		\item There exists a modulo $p$ generalized eigenform $\bar g'$ such that 
		\begin{enumerate}
			\item the Hecke eigensystem of $\bar g'$ is equal to that of $\bar f$,
			\item $\bar g'$ does not have CM, and 
			\item $\rho_{\bar g'}\vert_{G_p}$ is split.
		\end{enumerate}
	\end{enumerate}
		When these conditions are true, then $\bar g'$ in (ii) may be chosen so that its Hecke span is 2-dimensional, or, equivalently, $A_{\bar g'} \simeq \F[\epsilon]/(\epsilon^2)$. 
\end{thm}

\subsection{Results, Part II}
\label{subsec: results2} 

We expect that there are many choices of $(K,\bar\psi)$ such that $X(\psi^-)$ does not vanish, as the results of \S\ref{subsec: results} require. The following theorems address the general case. 

We consider the following Iwasawa-theoretic class group tower over $X(\psi^-)$. Let $K^-_\infty/K$ be the anti-cyclotomic $\Z_p$-extension of $K$. Let $K^-_\infty(\psi^-)$ be the composite of  $K^-_\infty$ and $K(\psi^-)$, and let $X^-_\infty(\psi^-)$ be the $\psi^-$-isotypical component of Galois group of the maximal pro-$p$  abelian unramified extension of $K^-_\infty(\psi^-)$. There is a surjection $X^-_\infty(\psi^-) \rsurj X(\psi^-)$, and standard arguments about the action of $\Gal(K^-_\infty(\psi^-)/K)$ on $X^-_\infty(\psi^-)$ imply that 
\[
X^-_\infty(\psi^-) = 0 \quad \text{if and only if} \quad X(\psi^-) = 0. 
\]
In light of Greenberg's pseudo-nullity conjecture \cite[Conj.~(3.5)]{greenberg2001}, it is natural to expect that $X^-_\infty(\psi^-)$ is \emph{finite} in cardinality (note that our assumptions rule out trivial zeros). We prove a proportionally weakened version of Theorem~\ref{thm:main} in this case. 

\begin{thm}
	\label{thm:main2}
	Assume (0)--(4) of \S\ref{subsec: setup} and that the class number of $K$ is prime to $p$. 
	If $X^-_\infty(\psi^-)$ has finite cardinality, then there exist at most finitely many ordinary $p$-adic eigenforms $g$ of tame level $N$ congruent to $\bar f$ such that $\rho_g \vert_{G_p}$ is split and $g$ does not have complex multiplication. 
\end{thm}

\begin{rem}
	\label{rem: analytic X finiteness}
	We note in \S\ref{subsec: ac Iwasawa objects} that $X^-_\infty(\psi^-)$ is infinite if and only if the $p$-adic $L$-function $L_p^-(\psi^-)$ and $L_p^-(\psi^-)^\ast$ mentioned in Remark \ref{rem: analytic X vanishing} have a common factor. It follows from smoothness results of the ordinary eigencurve in cohomological weights (i.e.\ $k\in\Z_{\geq 2}$; see \cite[Cor.~1.4]{hida1986a}, along with a duality argument) that such a common factor cannot correspond to a $p$-adic weight in $\Z\smallsetminus\{1\}$. 
\end{rem}

\begin{rem}
	\label{rem: later rem on GV}
	The conclusion of Theorem \ref{thm:main2} was proven subject only to the conditions (1')--(3') of \S\ref{subsec: setup} by Ghate--Vatsal \cite[Thm.~13]{GV2004}. We describe the relationship between the two methods in Remark~\ref{rem: thm main2 and GV}. 
\end{rem}

In analogy with Theorem \ref{thm: equiv main}, we also can give a modular characterization of the infinitude of $X^-_\infty(\psi^-)$. However, a more pleasant criterion applies to a mild generalization $\cX^-_\infty(\psi^-)$ of $X^-_\infty(\psi^-)$, which surjects onto $X^-_\infty(\psi^-)$ (see \S\ref{subsec: ac Iwasawa objects} for the definition), and is isomorphic to it when $p$ does not divide the class number of $K$. 

Similarly to the mod $p$ case above, to any generalized $p$-adic eigenform $g'$ with eigensystem equal to that of a $p$-adic eigenform with CM $f$ with coefficient field $E/W[1/p]$, there is associated a Galois representation
\[
\rho_{g'} : G_\Q \lra \GL_2(A_{g'}),
\]
where $A_{g'}$ is a finite-dimensional augmented local $E$-algebra, such that 
\[
(\rho_{g'}\;{\rm mod}\;{\m_{A_{g'}}}) \simeq \rho_f \quad \text{ and } \quad \rho_{g'} \not\simeq \rho_f \otimes_E A_{g'}.
\]
As before, the conditions of being $p$-locally split and of being CM can be sensibly applied to $\rho_{g'}$. 

\begin{thm}
	\label{thm: equiv main2}
	Assume (0)--(4) of \S\ref{subsec: setup}. The following conditions are equivalent. 
	\begin{enumerate}
		\item $\cX^-_\infty(\psi^-)$ has infinite cardinality.
		\item There exists a generalized $p$-adic eigenform $g'$ of tame level $N$ such that:
		\begin{enumerate}
			\item the Hecke eigensystem of $g'$ has CM and is congruent to $\bar f$,
			\item $g'$ does not have CM, and
			\item $\rho_{g'}\vert_{G_p}$ is split. 
		\end{enumerate}
	\end{enumerate}
	When these conditions are true, then $g'$ in (2) may be chosen so that its Hecke span is 2-dimensional, or, equivalently, $A_{g'} \simeq E[\epsilon]/(\epsilon^2)$. 
\end{thm}

\subsection{Method of Galois deformation theory}
\label{subsec: method}

By Hida's influential work \cite{hida1986a}, $p$-ordinary $p$-adic eigenforms of tame level $N$ with a congruence with $\bar f$ 
(such as $g$ in the statement of Theorem \ref{thm:main}, for example) 
are in bijective correspondence with ring homomorphisms $\bT \ra \oQ_p$, where $\bT$ is the ``big'' local $p$-adic Hecke algebra arising from the Hecke action on $p$-ordinary modular forms of tame level $N$ whose residual Hecke eigensystem is congruent to $\bar f$. On the other hand, upon assumptions (1'') and (2'), there exists a universal $p$-ordinary deformation ring $R^\ord$ (constructed by Mazur \cite{mazur1989}) parameterizing $p$-ordinary deformations of $\bar\rho$. Hida's further result \cite{hida1986} --- that the Galois representations attached to $p$-ordinary eigenforms interpolate in families --- implies that there exists a natural map $R^\ord \ra \bT$. Under assumptions (1''), (2'), and (3') along with mild local conditions, Diamond \cite{diamond1997}, following Wiles \cite{wiles1995}, has shown that this induces an isomorphism $R^\ord \risom \bT$. 

Replacing (1'') with (1') so that the expected implication (CG) is not trivial on $\bT$, we use a universal Galois deformation ring denoted $R^\spl$ (constructed by Ghate--Vatsal \cite{GV2011}) that parameterizes $p$-locally split representations of $\Gal(\oQ/\Q)$ deforming $\bar\rho$. It follows from the definitions that there is a surjection $R^\ord \rsurj R^\spl$. Thus, homomorphisms $R^\spl \to \oQ_p$ are in bijection with normalized $p$-ordinary eigenforms $g$ such that $\rho_g\vert_{G_p}$ is split. 

Assuming (0), there exist $p$-ordinary CM forms congruent to $\bar f$, resulting in a quotient $\bT \rsurj \bT^\CM$, where $\bT^\CM$ arises from the Hecke action on these CM forms. The fact that the Galois representations arising from $p$-ordinary CM eigenforms are $p$-locally split is reflected in the fact that there exists a surjection $R^\spl \rsurj \bT^\CM$ fitting in a commutative diagram
\begin{equation}
\label{eq:main diagram}
\begin{split}
\xymatrix{
	R^\ord \ar[r]^\sim \ar@{->>}[d] & \bT \ar@{->>}[d] \\
	R^\spl \ar@{->>}[r] & \bT^\CM
}
\end{split}
\end{equation}

In terms of this deformation-theoretic picture, our main result is Theorem~\ref{thm: main R=T}, which states that the surjection $R^\spl \rsurj \bT^\CM$ is an isomorphism \textit{if and only if} $X(\psi^-) = 0$. Theorem~\ref{thm:main} follows directly from this. The argument for Theorem~\ref{thm:main2} is similar, with the addition of commutative algebra arguments set up in \S\ref{sec: resultant} and further results on the structure of $\bT$ reviewed in \S\ref{sec: CM and non-CM}.

Theorem \ref{thm: main R=T} is deduced from Theorem \ref{thm: conormal spaces}, which shows that $\cX^-_\infty(\psi^-)$ constitutes the conormal module of $\Spec( \bT^\CM) \subset \Spec (R^\spl)$. With this structure of $R^\spl$ understood, Theorems \ref{thm: equiv main} and \ref{thm: equiv main2} are applications of $R^\ord \risom \bT$ and the duality between Hecke algebras and cusp forms. 

\subsection{A question}
One upshot of Theorem \ref{thm: main R=T} is that (CG) lies somewhat deeper than the simplest possible ``big $R\!=\!\bT$''-type theorem one could hope for, namely, $R^\spl \cong \bT^\CM$. Is there a Hecke algebra that always corresponds to $R^\spl$? What is the module of ``$p$-split'' modular forms? We intend to take this up in future work. 

\subsection{The appendix to this paper}

These investigations arose from an attempt to study (CG), for congruence classes  $\bar\rho=\Ind_K^\Q\bar\psi$ as introduced in \S\ref{sssec: resid CM} above, after restriction of the Galois representations from $G_\Q$ to $G_K$, using the methods of Wake and the second author \cite{WWE1} to control residually reducible representations. In the process, we realized that some of these arguments amounted to an application of a refined version of Shapiro's lemma to move between deformations of representations of $G_\Q$ and $G_K$. This is the method that is developed in \S\ref{sec: conormal} to prove the key Theorem \ref{thm: conormal spaces}; in particular, the proof of our results makes no use of the theory of ordinary pseudorepresentations of \cite{WWE1}. 

Independently and at about the same time as us, Haruzo Hida established similar results to ours by building on \cite{WWE1} as well as his recent work  \cite{hida-cyclicity}; see \S A.3 for a discussion of the theory of ordinary pseudorepresentations. He has very kindly offered to write his proof of our Theorem~\ref{thm:main} (assuming the class number of $K$ is prime to $p$) as an appendix to this paper. 

\subsection{Examples}
\label{subsec: eg}

Theorem \ref{thm:main} applies to tuples $(p, K,\psi)$, where the $\psi^-$-isotypical part of the ideal class group of $K(\psi^-)$ vanishes. In order for the theorem to apply non-trivially, we are interested in cases where:
\begin{enumerate}[label=(\roman*), leftmargin=2em]
\item $\bT \not\cong \bT^\CM$, i.e.\ there exist non-CM cusp forms congruent to $\bar f$, and
\item $X(\psi^-) = 0$. 
\end{enumerate}
For it is in these cases where Theorem \ref{thm:main} implies that there are no exceptions to (CG) congruent to $\bar f$. 

There are seven examples of $(p,K,\psi)$ satisfying (i) listed in \cite[pg.\ 268]{tilouine1989-CM} (four of which appear in \cite[pg.\ 142]{hida1985}), calculated by Maeda or Mestre. They also each satisfy the running assumptions in our paper, because $p O_K = \p\p^\ast$, $\psi^-$ has order at least 3, and $\psi$ is ramified exactly at $\p$. Among these examples, three of them satisfy $[K(\psi^-) : K] \leq 13$, so that we found it manageable to calculate $K(\psi^-)$ and its class group using \texttt{PARI/GP} or \texttt{Magma} on a single machine. In each of these three cases, $p$ does not divide the class number of $K(\psi^-)$, so that (ii) is satisfied and Theorem \ref{thm:main} applies. These examples are 
\begin{center}
\begin{tabular}{c|c|c}
$p$ & $K$ &  $\psi$\\ \hline 
13 & $\Q(i)$ & $\omega_\p^{8}$  \\
23 & $\Q(\sqrt{-7})$ & $\omega_\p^{10}$ \\
79 & $\Q(\sqrt{-7})$ & $\omega_\p^{12}$
\end{tabular}
\end{center}
The character $\omega_\p$ of $G_K$ is the Teichm\"uller lift of the following character $\bar\omega_\p : G_K \ra \F_p^\times$. Let $w := \#O_K^\times$ and let $\bar\omega_\p : (O_K/\p)^\times \risom \F_p^\times$ be the canonical identification. Then, for every multiple $a$ of $w$, one makes sense of $\omega_\p^a$ by taking the $(a/w)$-th power of the character of $G_K$ associated via class field theory to the character 
\[
\bar\omega_\p^w : (O_K / \p)^\times/O_K^\times \rinj \F_p^\times.
\]

To illustrate the example $(p,K, \psi) = (13,\Q(i), \omega_\p^8)$, we observe that $\psi^-$ has order $3$ and cuts out the $S_3$-extension of $\Q$ with minimal polynomial
\[
x^6 - 2x^5 + 2x^4 - 6x^3 + 25x^2 - 20x + 8.
\]
Its class number is 3. 

\begin{rem}
At the moment, we know of no single example where (ii) fails (which implies that (i) holds), so that the surjection $R^\mathrm{spl} \rsurj \bT^\CM$ is not an isomorphism and also the conditions of Theorem \ref{thm: equiv main} are satisfied.
\end{rem}

\subsection{Acknowledgements}
The authors would like to thank Haruzo Hida for helpful discussions on this topic, and for offering to write his results on it as an appendix to this paper. The authors also thank him for providing funding for C.W.E.'s travel to UCLA, where this project was initiated. 

The authors also thank Matt Emerton, Ralph Greenberg, Mahesh Kakde, Mark Kisin, Bharath Palvannan, Preston Wake, and Liang Xiao for interesting discussions related to this work, and the anonymous referee for a very careful reading of this paper, whose detailed suggestions helped us improve the exposition of our results.

During the preparation of this work, F.C.\ was partially supported by NSF grant DMS-1801385; C.W.E.\ was partially supported by Engineering and Physical Sciences Research Council grant EP/L025485/1; H.H.\ was partially supported by NSF grant DMS-1464106.

\subsection{Notation and conventions}
\label{subsec: notation}

Homomorphisms between profinite topological groups and algebras, and related Galois cohomology modules, are implicitly meant to be continuous. 

When $F$ is a number field with a set of places $\Sigma$, we let $G_{F,\Sigma}$ denote the Galois group of $F_{\Sigma}/F$, where $F_\Sigma$ is the maximal subextension of $\oQ/F$ that is ramified only at the places in $\Sigma$. Other conventions about Galois groups, such as decomposition groups $G_q$, have been stated in \S\ref{subsec: setup}. We use the case that $F = \bQ$ and $\Sigma$ is the set $S$ of places supporting $Np\infty$, thus the Galois group $G_{\Q,S}$. We use $G_{K,S}$ to denote $G_{K, S_K}$, where $S_K$ is the set of places of $K$ over $S$. 

When $F$ is either $K$ or $\Q$ and $T$ is a $G_{F,S}$-module, we write $C^i(O_F[1/pN], T)$ for the standard cochain complex of (inhomogeneous) $G_{F,S}$-cochains valued in $T$, and $H^i(O_F[1/pN], T)$ for its cohomology. We also use the notation $Z^i(O_F[1/pN], T)$ and $B^i(O_F[1/pN], T)$ for the submodules of cocycles and coboundaries, respectively. For a local field $M$ arising as a completion of $F$, with absolute Galois group $G_M$, we  use $C^i(M, T)$, $H^i(M, T)$, $Z^i(M,T)$, and $B^i(M,T)$ to denote the analogues of the global objects above.

\section{Ordinary modular forms and Galois representations}
\label{sec: interpolation}

In this section, we review background from the theory of $p$-adic interpolation of $p$-ordinary modular forms and Galois representations. 

\subsection{Hida theory} 

Throughout this paper, we freely refer to the $p$-adic families of $p$-ordinary eigenforms constructed by Hida (see \cite{hida1986a, hida1986}), along with the associated Hecke algebras and big Galois representations. This section summarizes the parts of this theory that we shall apply, following \cite[\S3]{WWE1} in some of this summary.

We take the data $\bar f$, $\bar\rho$, and $N$ of \S\ref{sssec: fix cong class} to be fixed in advance. 

\subsubsection{Ordinary $\Lambda$-adic cusp forms and Hecke algebras}

For $r \geq 1$, let $S_2(\Gamma_1(Np^r))_{\Z_p}^\ord$ be the ordinary summand of the $\Z_p$-module of cuspidal forms of weight $2$ and level $Np^r$ with coefficients in $\Z_p$. Let
\[
S'_\Lambda = \varinjlim_r S_2(\Gamma_1(Np^r))^\ord_{\Z_p}, 
\]
the limit being over the natural inclusion maps. Let $\bT'$ be the $\Z_p$-algebra generated by the endomorphisms of $S'_\Lambda$ given by the Hecke operators
\begin{equation}
\label{eq: Hecke operators}
T_n, U_\ell, U_p, \lr{d}, \text{ where } (n, Np) = 1, (d, Np) = 1, \ell \mid N \text{ is prime}. 
\end{equation}
The action of these operators on the modulo $p$ $p$-stabilized eigenform $\bar f$ gives rise to a maximal ideal of $\bT'$ with residue field $\F$. Let $\bT''$ denote the completion of $\bT'$ at this maximal ideal. 

We write $\bar\chi$ for $\det \bar\rho$, and $\chi$ for the Teichm\"uller lift of $\bar\chi$. Using the isomorphism $G_\Q^\mathrm{ab} \cong \hat \Z^\times$ of class field theory to think of $\chi$ as a Dirichlet character on $(\Z/pN\Z)^\times$ valued in $\oQ_p^\times$, we define $\Lambda_\Q$ as the $\chi$-isotypical quotient of $\Z_p\lb \Z_p^\times \times (\Z/N\Z)^\times\rb$. Likewise, using the projection $\hat\Z^\times \rsurj \Z_p^\times \times (\Z/N\Z)^\times$, we define the character 
\begin{equation}
\label{eq: dia character}
\dia_\Q : G_\Q \lra \Lambda_\Q^\times,
\end{equation}
which is a deformation of $\bar\chi$ from $\F$ to $\Lambda_\Q$. 

There is a natural map $\Z_p\lb \Z_p^\times \times (\Z/N\Z)^\times\rb \ra \bT''$ sending $d \mapsto \lr{d}$ for $d \in \Z$ with $(d, Np) = 1$. We let 
\[
\bT := \bT'' \otimes_{\Z_p\lb \Z_p^\times \times (\Z/N\Z)^\times\rb} \Lambda_\Q,
\] 
that is, we specialize $\bT$ so that the nebentype on $(\Z/pN\Z)^\times$ is constant and equal to $\chi$ (as opposed to a non-constant  deformation, which is possible when $p \mid \phi(N)$). 
Let $S_\Lambda := S'_\Lambda \otimes_{\bT'} \bT$; this is the module of $p$-ordinary $\Lambda$-adic cusp forms congruent to $\bar f$ and with nebentype precisely $\chi$, and $\mathbb{T}$ the corresponding Hecke algebra. 

By Hida's control theorem \cite[\S3]{hida1986a},  both $\bT$ and $S_\Lambda$ are free $\Lambda_\Q$-modules of finite rank, and by [\emph{loc.\ cit.}, \S2] the pairing
\begin{equation}
\label{eq: Lambda duality}
	\langle\; ,\; \rangle : \bT \times S_\Lambda \lra \Lambda_\Q, \quad (T,f) \mapsto a_1(T\cdot f)
\end{equation}
is a perfect pairing of $\Lambda_\Q$-modules. Consequently, we may view $\cF \in S_\Lambda$ as a $\Lambda$-adic $q$-series in $\Lambda_\Q\lb q\rb$ via
\begin{equation}
\label{eq: Lambda q-series}
\cF \mapsto \sum_{n \geq 1} \langle T'_n, \cF\rangle q^n,
\end{equation}
where $T'_n = T_n$ for $(n, Np) =1$ and, otherwise, $T'_n$ is the usual polynomial (see e.g. \cite[Thm.~3.24]{shimura-IAT}) in the operators of \eqref{eq: Hecke operators} with coefficients in $\Z$. 

\subsubsection{Cohomological weights}
\label{sssec: cohom weights}

We define a \emph{$p$-adic weight} to be a characteristic zero height 1 prime $P$ of $\Lambda_\Q$. Any weight arises from a pair of characters $(\phi_k,\chi')$, 
\[
\phi_k: \Z_p^\times \ra \oQ_p^\times \quad \text{ and } \quad \chi' : (\Z/p^rN\Z)^\times \ra \oQ_p^\times\quad  (\text{some } r \geq 1)
\]
such that 
\[
(\phi_k \cdot \chi') \vert_{(\Z/pN\Z)^\times} = \chi
\]
under the canonical decomposition $\Z_p^\times \cong \F_p^\times \times (1 + p\Z_p)$. In general $k$ is a formal label, but when we start with $k\in\Z$, then $\phi_k$ is the homomorphism $\phi_k(x) := x^{k-1}$. The height $1$ prime $P = P_{k,\chi'} \subset \Lambda_\Q$ associated to $(\phi_k,\chi')$ is defined to be the kernel of the factorization of the ring homomorphism $\Z_p\lb \Z_p^\times \times (\Z/N\Z)^\times\rb \ra \oQ_p$ through $\Lambda_\Q$ induced by $\phi_k \cdot \chi'$. A weight $(\phi_k,\chi')$ is called \emph{cohomological} when $k \in \Z_{\geq 2}$.

By Hida's control theorem, $\bT$ and $S_\Lambda$ interpolate their classical analogues in cohomological weight. That is, for any $p$-adic weight $(\phi_k, \chi')$ with $k \in \Z_{\geq 2}$, we recover the module of cusp forms of this weight $k$ and nebentype $\chi'$ that are congruent to $\bar f$ via
\[
S_\Lambda \otimes_{\Lambda_\Q} \Lambda_\Q/P_{k,\chi'} \cong S_{k,\chi'} := S_k(\Gamma_1(Np^{r}), \chi')^\ord_{\bar f} \subset S_k(\Gamma_1(Np^{r}), \chi')^\ord_{\Z_p}.
\]
Similarly, denoting by $\bT_{k,\chi'}$ the Hecke algebra generated by the Hecke action on $S_{k,\chi'}$, we have a ring isomorphism
\[
\bT \otimes_{\Lambda_\Q} \Lambda_\Q/P_{k,\chi'} \cong \bT_{k,\chi'}
\]
and the $\Lambda_\Q$-adic duality \eqref{eq: Lambda duality} specializes modulo $P_{k,\chi'}$ to the $\bar f$-congruent part of the classical duality between $S_k(\Gamma_1(Np^{r}), \chi')$ and its Hecke algebra. 

We will use these consequences of the foregoing theory.
\begin{lem}
	\label{lem: classical duality}
	There is a bijection between forms in $S_{k,\chi'} \otimes_{\Lambda_\Q/P_{k,\chi'}} \oQ_p$ and $\Lambda_\Q$-linear maps $\bT \ra \oQ_p$ factoring through $\bT \otimes_{\Lambda_\Q} \Lambda_\Q/P_{k,\chi'}$, restricting to a bijection between normalized eigenforms and multiplicative maps. 
\end{lem} 

\begin{proof}
This is standard: see \cite[Cor.~3.2]{hida1986a} and \cite[Thm.~1.2]{hida1986}.
\end{proof}

\begin{lem}
\label{lem: reduced}
$\bT$ is reduced. 
\end{lem}
\begin{proof}
This follows from the argument of \cite[Lem.\ 5.4]{hida2015}. Indeed, the nilradical of $\bT_{k,\chi'}$ is known to act faithfully on oldforms that are old at levels dividing $N$ according to \cite[Cor.\ 3.3]{hida1986a}, and there are no such oldforms in cohomological weight by the assumption that $N$ is the Artin conductor of $\bar\rho$. Therefore $\bT \otimes_{\Lambda_\Q} \Lambda_\Q/P_{k,\chi'}$ is reduced for $k \in \Z_{\geq 2}$, and since cohomological weights are dense in $\Spec \Lambda_\Q$ and $\bT$ is flat over $\Lambda_\Q$, $\bT$ is reduced. 
\end{proof}

\subsubsection{Associated Galois representations}
\label{sssec:AGR}

Hida \cite{hida1986a} proved that the Galois representations $\rho_f$ of \eqref{eq: assoc Gal rep} associated to $p$-ordinary cuspidal eigenforms $f$ interpolate along $\bT$. Under some assumptions, this interpolation takes on the following particularly strong form. For the statement, we write $x_f : \bT \ra E_f \subset \oQ_p$ for the homomorphism associated to a cohomological $p$-ordinary eigenform $f$ as per Lemma~\ref{lem: classical duality}, where $E_f$ is the residue field of $x_f$. 
\begin{prop}
\label{prop: interpolation}
Upon assumptions (1'') and (2') of \S\ref{subsec: setup}, there exists a continuous representation 
\[
\rho_{\bT} : G_\Q \lra \GL_2(\bT),
\]
characterized by the interpolation condition
\[
\rho_f \simeq \rho_\bT \otimes_{\bT, x_f} E_f.
\]
Moreover, $\rho_\bT$ is ramified only at places supporting $Np\infty$ and restricts to $G_p$ with form
\begin{equation}
\label{eq: ord form of rho_T}
\rho_\bT\vert_{G_p} \simeq \ttmat{\dia_\Q\vert_{G_p} \cdot \nu^{-1}}{*}{0}{\nu},
\end{equation}
where $\nu : G_p \ra \bT^\times$ is an unramified character sending an arithmetic Frobenius $\Frob_p$ to $U_p$ and $\dia_\Q$ was defined in \eqref{eq: dia character}. 
\end{prop}
\begin{proof}
Using assumptions (1'') and (2'), Hida's interpolation result \cite[Thm.\ 2.1]{hida1986a} may be upgraded to the claimed form: see e.g.\ \cite[Props.\ 2.2.7 and 2.2.9]{EPW2006}. Then the characterization claim follows from the fact that $\bT$ is flat over $\Lambda_\Q$ and reduced by Lemma \ref{lem: reduced}, as $\bT$ therefore injects into the product of the $E_f$. 
\end{proof}

Also, it follows from the above interpolation and the properties of $\rho_f$ that the determinant of $\rho_\bT$ is given by 
\begin{equation}
\label{eq: det of ord}
\det \rho_\bT \cong \dia_\Q \otimes_{\Lambda_\Q} \bT. 
\end{equation}
In particular, we have equality of $\F$-valued characters of $G_\Q$, $(\det \rho_\bT \mod{\m_\bT}) = (\dia_\Q \mod{\m_{\Lambda_\Q}}) = \bar\chi$.

\subsubsection{Complex multiplication in Hida families}
\label{sssec: CM-hida}

When we impose assumption (0) --- i.e., that $\bar\rho$ is induced from $\bar\psi$ --- there exist classical $p$-ordinary eigenforms with CM that are congruent to $\bar f$ and have tame level $N$. In each cohomological weight $(\phi_k, \chi')$, these form Hecke submodules
\[
S^\CM_{k,\chi'} \subset S_{k,\chi'}. 
\]
The action of $\bT$ on these submodules in cohomological weight results in a quotient $\bT \rsurj \bT^\CM$ which acts faithfully on them (see e.g. \cite[Prop.~5.1]{hida2015}). 

Recalling from \eqref{eq: CM condition} the definition of CM form, we observe that this also applies to any element of $S_\Lambda$, using \eqref{eq: Lambda q-series}. Thus we have a sub-$\Lambda_\Q$-module of $\Lambda$-adic CM forms $S^\CM_\Lambda \subset S_\Lambda$. 

It is known (see e.g.\ \cite[\S{5}]{hida2015}) that $S^\CM_\Lambda$ is Hecke-stable, $\bT^\CM$ and $S^\CM_\Lambda$ are free $\Lambda_\Q$-modules, and the duality \eqref{eq: Lambda duality} restricts to a $\Lambda_\Q$-linear perfect pairing
\[
\bT^\CM \times S^\CM_\Lambda \lra \Lambda_\Q.
\]
This duality along with the control theorem results in a CM-version of the control in cohomological weights $(\phi_k,\chi')$, 
\[
\bT^\CM \otimes_{\Lambda_\Q} \Lambda_\Q/P_{k, \chi'} \risom \bT^\CM_{k,\chi'}, \quad S^\CM_\Lambda \otimes_{\Lambda_\Q} \Lambda_\Q/P_{k, \chi'} \risom S^\CM_{k,\chi'}.
\]

We let $I_\CM:=\ker(\bT \rsurj \bT^\CM)$, and denote by $\rho_\CM$ the restriction of $\rho_\bT$ to the CM locus: $\rho_\CM := \rho_\bT \otimes_\bT \bT^\CM$.

\subsection{Non-classical weights and generalized eigenforms}
\label{subsec: weights and GE}

We will have significant interest in both 
\begin{enumerate}[label=(\roman*)]
	\item $p$-ordinary $p$-adic cusp forms of non-cohomological weight, and 
	\item $p$-ordinary modulo $p$ cusp forms. 
\end{enumerate}
In both cases, we also need to define generalized eigenforms and their associated Galois representations. 

We define $p$-ordinary cusp forms of non-cohomological weight by interpolation. These are all implicitly ``of tame level $N$''.
\begin{defn}\label{defn:eigen}\hfill
\begin{enumerate}
	\item 
	A \emph{$p$-adic $p$-ordinary cusp form of $p$-adic weight $(\phi_k, \chi')$} with a congruence with $\bar f$ is an element of $S_{k, \chi'} := S_\Lambda \otimes_{\Lambda_\Q} \Lambda_\Q/P_{k,\chi'}$. 
	\item
	A \emph{$p$-ordinary $p$-adic Hecke eigensystem} congruent to $\bar f$ is a homomorphism $\bT \ra \oQ_p$, and its weight $(\phi_k,\chi')$ is determined by the unique height 1 prime $P \subset \Lambda_\Q$ through which the composite $\Lambda_\Q \ra \bT \ra \oQ_p$ factors.
\end{enumerate} 
\end{defn}

\begin{rem}
Note that $S_{k,\chi}$ is equal to the module of classical $p$-ordinary forms, denoted identically, when the weight is cohomological. 
\end{rem}

The notions of 
\begin{itemize}
	\item Hecke eigenform,
	\item generalized Hecke eigenform, and
	\item CM by $K$ (the condition of \eqref{eq: CM condition})  
\end{itemize}
apply to such objects in the same manner as to their classical counterparts. In particular, Lemma \ref{lem: classical duality} generalizes straightforwardly to any $p$-adic weight. Thus the eigensystems from Definition~\ref{defn:eigen}(2) are in natural bijection with normalized eigenforms, i.e., ``multiplicity one'' holds in the presence of (1'')--(3'). 

For the sake of clarity, we specify the meaning of ``generalized eigenform''. We use the notation $(-)[1/p]$ as shorthand for $(-) \otimes_{\Z_p} \Q_p$. 
\begin{defn}
	\label{defn: generalized eigenform}
	Let $g'$ be $p$-adic $p$-ordinary cusp form in $S_{k, \chi'}$ that is congruent to $\bar f$. Denote by $\bT[1/p]g'$ the $\bT[1/p$]-span of $g'$ in $S_{k,\chi'}[1/p]$. We call $g'$ a \emph{generalized eigenform} when 
	\begin{enumerate}[label=(\roman*)]
		\item $g'$ is not an eigenform, and 
		\item $\mathrm{soc}(\bT[1/p]g')$ is simple as a $\bT[1/p]$-module, where $\mathrm{soc}(\bT[1/p]g')$ denotes the socle of $\bT[1/p]g'$ as a $\bT[1/p]$-module.
\end{enumerate}

	From such a generalized eigenform, we obtain a $p$-adic $p$-ordinary eigensystem $\bT \ra \oQ_p$ of weight $(\phi_k, \chi')$ via the $\bT$-action on this socle. Denote by $E_{g'}$ the subfield of $\oQ_p$ generated by the image of $\bT$ in $\End_{\Q_p}(\mathrm{soc}(\bT[1/p]g'))$. We also say that \emph{the Hecke eigensystem of $g'$ is $g$} when $g \in S_{k,\chi'}$ is a eigenform and also is an $E_{g'}$-basis for $\mathrm{soc}(\bT[1/p]g')$. 
\end{defn}

We also define the $p$-ordinary modulo $p$ cusp forms required for Theorem \ref{thm: equiv main}.
\begin{defn}
\label{defn: mod p cusp form}
	A \emph{$p$-ordinary modulo $p$ cusp form} (of tame level $N$) congruent to $\bar f$ is an element of $S_\F := S_\Lambda \otimes_{\Lambda_\Q} \F$. 
\end{defn}

Exactly as in the $p$-adic case, the definition of eigenform, generalized eigenform, and CM by $K$ are identically formulated in $S_\F$. Note, however, that the socle of the Hecke span of an element of $S_\F$ is always simple and even 1-dimensional over $\F$, being spanned by $\bar f$. Thus every element of $S_\F$ is a generalized eigenform with Hecke eigensystem precisely $\bar f$. 

Finally, we require Galois representations associated to generalized eigenforms $g' \in S_{k,\chi'}$ and $\bar g' \in S_\F$. 

\begin{defn}
Let $A_{g'}$ be the $\Q_p$-subalgebra of $\End_{\Q_p}(\bT  g' \otimes_{\Z_p} \Q_p)$ generated by the Hecke action on the Hecke span $\bT[1/p]g'$ of $g'$. Thus we have a natural homomorphism $\bT \ra A_{g'}$, and the Galois representation $\rho_{g'}$ associated to $g'$ is given by 
\[
\rho_{g'} := \rho_\bT \otimes_\bT A_g'.
\]
The definition for $\rho_{\bar g'}$ is formulated identically. 
\end{defn}

\begin{lem}
\label{lem: GE endo}
There is a canonical structure of augmented $E_{g'}$-algebra on $A_{g'}$, compatible with the maps they receive from $\bT$. 
There is an identical statement for a generalized eigenform $\bar g' \in S_\F$ in place of $g'$. 
\end{lem}

\begin{proof}
Observe that $E_{g'}$ is the residue field of $\bT \otimes_{\Lambda_\Q} \Lambda_\Q/P_{k,\chi}$ at its prime ideal $\wp_{g'}$, because $\wp_{g'}$ is the kernel of the Hecke action homomorphism 
\[
\bT \otimes_{\Lambda_\Q} \Lambda_\Q/P_{k,\chi} \ra \End_{\Q_p}(\mathrm{soc}(\bT[1/p]g')).
\]
Likewise, $A_{g'}$ admits a surjection from the completion 
$(\bT \otimes_{\Lambda_\Q} \Lambda_\Q/P_{k,\chi})_{\wp_{g'}}^\wedge$ at this residue field. As this completion is naturally endowed with the structure of an augmented local Artinian $E_{g'}$-algebra, this gives $A_{g'}$ the same kind of structure. This augmentation structure $E_{g'} \rinj A_{g'} \rsurj E_{g'}$ is $\bT$-equivariant, by construction.
\end{proof}

\subsection{The ordinary deformation ring}
\label{subsec: ord def ring}

In this section, we recall an  minimal ordinary deformation ring and its comparison to a Hecke algebra. 

Recall that we have fixed $\bar\rho$ as in \S\ref{subsec: results}, with coefficient field $\F$, and that $W = W(\F)$ is the Witt ring of $\F$. Recall also that we denote the semi-simplification of $\bar\rho\vert_{G_p}$ by $\bar\chi_1 \oplus \bar\chi_2$, where $\bar\chi_2$ is assumed to be unramified. We use $\simeq$ to represent isomorphisms of representations up to conjugation, while we use $=$ to denote identical homomorphisms into $\GL_2$. Finally, recall also the notation $G_{\Q,S}$ from \S\ref{subsec: notation}. 

Let $\mathrm{CNL}_W$ denote the category of complete Noetherian local $W$-algebras $A$ with residue field $A/\m_A \cong \F$. 
\begin{defn}[{The minimal ordinary deformation functor, e.g.\ \cite[\S3.1]{DFG2004}}]
\label{defn: Dord}
Let $D^\ord : \mathrm{CNL}_W \ra \mathrm{Sets}$ be the functor associating to $A$ the set of strict equivalence classes of homomorphisms $\rho_A : G_{\Q,S} \ra \GL_2(A)$ such that
\begin{enumerate}[label=(\roman*), leftmargin=2em]
\item $\rho_A \otimes_A \F = \bar\rho$;
\item $\rho_A\vert_{G_p} \simeq \sm{\chi_1}{b}{0}{\chi_2}$, where $\chi_2 : G_p \ra A^\times$ deforms $\bar\chi_2$ and is unramified; 
\item for primes $\ell \mid N$ such that $\#\bar\rho(I_\ell) \neq p$, reduction modulo $\m_A$ induces an isomorphism $\rho_A(I_\ell) \risom \bar\rho(I_\ell)$;
\item for primes $\ell \mid N$ such that $\#\bar\rho(I_\ell) = p$, $\rho_A^{I_\ell}$ is $A$-free of rank 1. 
\end{enumerate}
The ``strict'' equivalence relation is conjugation by an element of $1 + M_{2\times 2}(\m_A) \subset \GL_2(A)$. Note also that $\#\bar\rho(I_\ell) = p$ is equivalent to $\bar\rho(I_\ell)$ having unipotent image. 

Deformations $\rho_A$ of $\bar\rho$ satisfying the conditions defining $D^\ord$ will be known as \emph{$p$-ordinary of tame level $N$}, or just \emph{$p$-ordinary}. 
\end{defn}

The term ``minimal'' refers to conditions (iii) and (iv), while ``ordinary'' refers to condition (ii). These conditions are well-known to be relatively representable on deformation problems, as follows. 

\begin{prop}
The conditions (1'') and (2') of \S\ref{subsec: setup} imply that $D^\ord$ is representable by $R^\ord \in \mathrm{CNL}_W$. In this case, there is a universal ordinary deformation $\rho^\ord : G_{\Q,S} \ra \GL_2(R^\ord)$ of $\bar\rho$. 
\end{prop}
\begin{proof}
Upon these conditions, the representability of a deformation ring for conditions (i) and (ii) of Definition \ref{defn: Dord} is originally due to Mazur \cite[\S1.7, Prop.\ 3]{mazur1989}. A simplification of the argument for (ii) applies to show that (iv) is relatively representable as well. It is standard that condition (iii) is relatively representable. 
\end{proof}

Assuming (1'')--(3'), and under some mild additional conditions, one may produce a map $R^\ord \ra \bT$ corresponding to the representation $\rho_\bT$ and prove that it is an isomorphism. This was first done in many cases by Wiles \cite{wiles1995}, followed by generalizations such as those of Diamond \cite{diamond1996, diamond1997}. Note, however, that some of these generalizations require modifications to $R^\ord$ or $\bT$. We state here only the case we need, where we assume (0)--(4) of \S\ref{subsec: setup}. In this generality, the isomorphism is due to Wiles \cite[Thm.\ 4.8]{wiles1995}.

\begin{thm}[Wiles]
	\label{thm: Rord=Tord}
	Assume (0)--(4) of \S\ref{subsec: setup}. Then the representation $\rho_\bT$ of Proposition \ref{prop: interpolation} induces an isomorphism $R^\ord \risom \bT$ of complete intersection rings. 
\end{thm}

Due to assumption (4), there are no $\ell \mid N$ of type (iv) in the sense of  Definition \ref{defn: Dord}; they are all of type (iii). While it is implicit in Theorem \ref{thm: Rord=Tord} that $\rho_\bT$ satisfies condition (iii), it will be useful later to have seen the following verification. 

\begin{lem}
\label{lem: local reduction}
Assume conditions (0)--(4) of \S\ref{subsec: setup}. Then reduction modulo $\m_\bT$ induces isomorphisms
\[
\rho_\bT(I_\ell) \lrisom \bar\rho(I_\ell) \quad \text{for all } \ell \mid N.
\]
\end{lem}
\begin{proof}
Because $\bT$ is reduced (Lemma \ref{lem: reduced}), by Lemma \ref{lem: classical duality} it will suffice to prove the result after replacing $\rho_\bT$ by $\rho_f$ for an eigenform $f$ with a cohomological weight $(k,\chi')$ of $\Lambda_\Q$. 

Choose some prime $\ell \mid N$, and write $\bar\rho\vert_{G_\ell} \simeq \bar\chi_{\ell,1} \oplus \bar\chi_{\ell,2}$, where (only) $\bar\chi_{\ell,1}$ is ramified. It follows that $H^1(\Q_\ell, (\bar\chi_{\ell,1}\bar\chi_{\ell,2}^{-1})^\pm) = 0$. This in turn implies that $\rho_f\vert_{G_\ell} \simeq \chi_{\ell,1} \oplus \chi_{\ell,2}$, where $\chi_{\ell, i}$ deforms $\bar\chi_{\ell,i}$. Because we have fixed the determinant at $\ell$ (i.e.\ $\det \rho_f\vert_{I_\ell} = \chi'\vert_{I_\ell})$, we observe that the claimed isomorphism fails if and only if $\chi_{\ell,2}$ is ramified if and only if the conductor of $\rho_f\vert_{G_\ell}$ exceeds that of $\bar\rho\vert_{G_\ell}$. However, we have assumed that $f$ is of level $N$, which is defined to be the prime-to-$p$ conductor of $\bar\rho$. 
\end{proof}

We have this addendum to Lemma \ref{lem: GE endo}. 
\begin{lem}
\label{lem: GE Gal reps}
If we let $g$ denote the eigensystem of $g'$, we have
\[
\rho_{g'} \not\simeq \rho_g \otimes_{E_{g'}} A_{g'}. 
\]
\end{lem}

\begin{proof}
Since the socle of $\bT[1/p]g'$ is one-dimensional over $E_{g'}$ but $g'$ is not an eigenform, the Hecke action map $\bT \ra A_{g'}$ cannot factor through the $\bT$-algebra map $E_{g'} \ra A_{g'}$ that corresponds to the Hecke action on $g$. Since $R^\ord$, and hence $\bT$ as well (Theorem~\ref{thm: Rord=Tord}), is a quotient of the unrestricted deformation ring of $\bar\rho$, this means that distinct homomorphisms to $A_{g'}$ out of $\bT$ must correspond to non-isomorphic Galois representations. 
\end{proof}

\subsection{The $p$-locally split deformation ring}
\label{subsec: spl def ring}

The following deformation problem was first considered by Ghate--Vatsal	\cite{GV2011}.

\begin{defn}
\label{defn: Dspl}
Let $D^\spl : \mathrm{CNL}_W \ra \mathrm{Sets}$ be the subfunctor of $D^\ord$ associating to $A$ the set of strict equivalence classes of homomorphisms of the form 
\[
\rho_A\vert_{G_p} \simeq \ttmat{\chi_1}{0}{0}{\chi_2}. 
\]
Deformations $\rho_A$ of $\bar\rho$ satisfying the conditions defining $D^\spl$ will be known as \emph{$p$-split}. 
\end{defn}

\begin{prop}[Ghate--Vatsal]
\label{prop: Rsplit valid}
Assume conditions (1') and (2') of \S\ref{subsec: setup}. Then $D^\spl$ is representable by $R^\spl \in \mathrm{CNL}_W$. 
\end{prop}
\begin{proof}
This is \cite[Prop.\ 3.1]{GV2011}. 
\end{proof}

\begin{cor}
	\label{cor: diagram valid}
	Assume conditions (0)--(4) of \S\ref{subsec: setup}. Then the Galois representations $\rho_\bT$ and $\rho_\CM$ induce diagram \eqref{eq:main diagram}. 
\end{cor}
\begin{proof} 
	We already know that $R^\ord \risom \bT$ from Theorem \ref{thm: Rord=Tord}. The canonical surjection $R^\ord \rsurj R^\spl$ arises from Proposition \ref{prop: Rsplit valid}. Because $\rho_\CM$ is induced via $\Ind_K^\Q$ (see Proposition \ref{prop: CM Galois}) and $p$ splits in $K$, $\rho_\CM\vert_{G_p}$ is $p$-split. Thus $\rho_\CM$ induces a surjection $R^\spl \rsurj \bT^\CM$. The commutativity of \eqref{eq:main diagram} is clear.
\end{proof}

\section{Anti-cyclotomic Iwasawa theory}
\label{sec: ac iwasawa}

In this section, we assemble background information about objects of anti-cyclotomic Iwasawa theory and their relation to Galois cohomology. We will apply the assumptions (0)--(4) of \S\ref{subsec: setup} and use the characters $\bar\psi$ and $\bar\psi^-$ defined there. 

\subsection{Anti-cyclotomic extensions and Iwasawa algebras}
\label{subsec: AC extns}

Recall that we assume that $p\cO_K=\p\p^\ast$ splits, with $\overline{\bQ}\subset\overline{\bQ}_p$ inducing $\p$. We have $G_{K,S}$ as in \S\ref{subsec: notation}. Our notation mostly follows \cite[pg.\ 636]{hida2015}. 

Let $\mathfrak{C}$ be the prime-to-$p$ conductor of $\bar\psi^-:G_{K,S}\rightarrow\mathbb{F}^\times$, which is equal to $\frc \cdot \frc^c$ by assumption (4). Then we consider the following abelian quotients of $G_{K,S}$: 
\begin{align*}
	\mathfrak{Z}&=\textrm{the ray class group of $K$ modulo $\mathfrak{C}p^\infty$,}\\
	Z^-&=\textrm{the maximal quotient of $\mathfrak{Z}$ where complex conjugation acts as $-1$},\\ 
	Z_p^-&=\textrm{the maximal $p$-profinite quotient of $Z^-$}.
\end{align*}

Let $\mathcal{K}_{\mathfrak{C}p^\infty}$ be the ray class field of $K$ modulo $\mathfrak{C}p^\infty$. Let $K_{\mathfrak{C}p^\infty, p}^-/K$ denote the maximal pro-$p$ anti-cyclotomic subextension of $\mathcal{K}_{\mathfrak{C}p^\infty}/K$, so that the Artin map supplies canonical isomorphisms 
\[
\mathfrak{Z}\cong{\rm Gal}(\mathcal{K}_{\mathfrak{C}p^\infty}/K),\quad 
Z_p^-\cong{\rm Gal}(K_{\mathfrak{C}p^\infty,p}^-/K)  
\]
We also let $\Gamma_K^-\cong\bZ_p$ be the maximal torsion-free quotient of $Z_p^-$, and let $K^-_\infty/K$ be the corresponding $\Z_p$-extension. 

Let $\F'$ be the subfield of $\F$ generated by the values of $\bar\psi^-$, and 
denote by $\psi^-:G_{K,S} \rightarrow W'^\times$ the Teichm\"uller lift of $\bar{\psi}^-$, where $W' := W(\F')$. Then $\psi^-$ factors through a character on the quotient $Z^{(p)}_-:=Z^-/Z^-_p$ (a direct factor of $Z^-$), hence defining a projection 
\[
	\pi_{\psi^-}:W'\lb Z^-\rb\longrightarrow W'\lb Z_p^-\rb
\]
sending a group-like element $(z_p,z^{(p)})\in Z^-\subset W'\lb Z^-\rb^\times$ to $\psi^-(z^{(p)})z_p\in W'\lb Z_p^-\rb$. In the following, we let
\begin{equation}
	\label{eq: ac group rings}
	\til\Lambda_{W'}^- := W'\lb Z_p^-\rb,\quad 
	\Lambda_{W'}^-:= W'\lb\Gamma_K^-\rb 
\end{equation}
denote the isotypical components of $W'\lb Z^-\rb$ via $\pi_{\psi^-}$, and via $\pi_{\psi^-}$ composed with the natural projection $Z_p^-\twoheadrightarrow\Gamma_K^-$, respectively. Let $\cI := \ker(\til\Lambda_{W'}^- \rsurj \Lambda_{W'}^-)$ be the kernel of the natural projection. 

\smallskip
\noindent
\textbf{Notation:} For the rest of \S\ref{sec: ac iwasawa} we drop the subscript $W'$ in $\til\Lambda_{W'}^-, \Lambda_{W'}^-$, but we resume this outside \S\ref{sec: ac iwasawa}.

A choice of section $s: \Gamma_K^- \rinj Z_p^-$ endows $\til\Lambda^-$ with the structure of an augmented $\Lambda^-$-algebra. Moreover, it is free of finite rank, receiving a natural isomorphism
\begin{equation}
	\label{eq: tilde presentation}
	\til \Lambda^- \cong \Lambda^-\lb{\rm Gal}(H_s/K)\rb,
\end{equation}
where $H_s/K$ is the finite $p$-primary unramified extension of $K$ cut out by the quotient $Z_p^- \rsurj Z_p^-/\Gamma_K^-$. 

Let
\begin{equation}
	\label{eq: AC characters}
	\begin{gathered}
		\dia_- : G_{K,S} \ra (\Lambda^-)^\times, \qquad \til\dia_- : G_{K,S} \ra (\til\Lambda^-)^\times 
	\end{gathered}
\end{equation}
be the canonical characters arising from the projection from the group rings \eqref{eq: ac group rings}, and denote by $\Lambda^-_\dia$ (resp. $\til\Lambda^-_\dia$) the free $\Lambda^-$-module (resp. $\til\Lambda^-$-module) of rank $1$ on which $G_{K,S}$ acts via $\dia_-$ (resp. $\til\dia_-$). 
In particular, the residual character in both cases is $\bar\psi^- : G_{K,S} \ra \F'^\times$. 

The following extension fields of $K$ are cut out by the characters 
$\psi^-$, $\dia_-$, and $\til\dia_-$, respectively:
\begin{align*}
	K(\psi^-)&=\overline{\Q}^{\ker(\psi^-)},\\
	K_\infty^-(\psi^-)&=\textrm{the composite $K_\infty^-K(\psi^-)$},\\
	\cK_\infty^-(\psi^-)&=\textrm{the composite $K_{\mathfrak{C}p^\infty, p}^-K(\psi^-)$}.
\end{align*}

\subsection{Anti-cyclotomic Katz $p$-adic $L$-functions}
\label{subsec: katz}

We briefly recall Katz's $p$-adic $L$-functions attached to $K$. In this section we write $\sW$ for the Witt ring $W(\overline{\mathbb{F}}_p)$ of an algebraic closure of $\mathbb{F}_p$.

For any prime-to-$p$ ideal $\mathfrak{C} \subset \cO_K$, Hida--Tilouine \cite{HT-ENS}, following work of Katz \cite{katz-CM} in the case $\fC = 1$, produced an element 
\[
\mu_\p\in \sW\lb\mathfrak{Z}\rb
\]
(denoted $\mu_p(\mathfrak{C}\p^{*\infty})$ in \cite[Thm.~II.4.14]{deShalit}) characterized by an interpolation property of critical values of the complex  $L$-functions attached to certain Hecke characters of $K$ modulo $\mathfrak{C}p^\infty$. Taking $\mathfrak{C}$ to be the prime-to-$p$ conductor of $\bar\psi^-$, we shall be concerned with the projection 
\[
\mathfrak{L}_p^-(\psi^-) \in \sW\lb Z_p^-\rb \cong \til\Lambda^- \hat\otimes_{W'} \sW 
\]
of $\mu_\p$ via the composite of the natural projection $\sW\lb\fZ\rb \rsurj \sW\lb Z^-\rb$ with $\pi_{\psi^-}$. 

By the Weierstrass preparation theorem, we may and do fix a choice of $\til{L}_p^-(\psi^-)\in\til{\Lambda}^-$ such that
\[
(\til{L}_p^-(\psi^-) \otimes 1)=(\mathfrak{L}^-_p(\psi^-))
\] 
as ideals in $\til\Lambda^-\hat\otimes_{W'} \sW$, and write $L_p^-(\psi^-) \in \Lambda^-$ for its further specialization to $\Lambda^-$. Finally, when $\Spec(\bI) \subset \Spec(\til\Lambda^-)$ is some irreducible component, we denote by $\til L^-_p(\psi^-)_\bI$ the specialization of $\til L_p^-(\psi^-)$  to $\bI$. 

The same constructions apply when $\p$ is replaced by $\p^\ast$ (i.e., starting with $\mu_{\p^*}$), yielding  $\til{L}_p^-(\psi^-)^*\in\til\Lambda^-$, etc. Altogether we obtain the following avatars of the Katz $p$-adic $L$-functions that we will consider: 
\begin{equation}
	\label{eq: Lambda ac-katz}
	\begin{gathered}
		\til L_p^-(\psi^-) , \quad \til L_p^-(\psi^-)^\ast  \in \til\Lambda^-;\\
		L_p^-(\psi^-), \quad L_p^-(\psi^-)^\ast \in \Lambda^-;\\
		\til L_p^-(\psi^-)_\bI, \quad \til L_p^-(\psi^-)_\bI^\ast \in \bI.
	\end{gathered}
\end{equation}

Since we impose condition (4), the following result gives us that the $\mu$-invariants of these $p$-adic $L$-functions (when the coefficient ring is a domain) vanish. 

\begin{prop}[{Finis \cite{finis2006}, Hida \cite{hida2010}}]
	\label{prop: mu=0}
	The $\mu$-invariants of $L_p^-(\psi^-)$, $L_p^-(\psi^-)^\ast$, $\til L_p^-(\psi^-)_\bI$, and $\til L_p^-(\psi^-)_\bI^\ast$ are zero. 
\end{prop}

\begin{rem}
	Each $\bI$ is abstractly isomorphic to $W'[\mu_{p^n}]\lb t\rb$ for some $n$, where $\mu_{p^n}$ denotes a $p^n$-th root of unity. 
\end{rem}

\subsection{Anti-cyclotomic Iwasawa class groups}
\label{subsec: ac Iwasawa objects}

Consider the following metabelian field extensions of $K$:
\begin{align*}
	M_\infty^-&=\textrm{the maximal $\p$-ramified pro-$p$ abelian extension of $K_\infty^-(\psi^-)$},\\
	\cM_\infty^-&=\textrm{the maximal $\p$-ramified pro-$p$ abelian extension of $\cK_\infty^-(\psi^-)$}, \\
	L_\infty^-&=\textrm{the maximal unramified pro-$p$ abelian extension of $K_\infty^-(\psi^-)$},\\
	\cL_\infty^-&=\textrm{the maximal unramified pro-$p$ abelian extension of $\cK_\infty^-(\psi^-)$}.
\end{align*}
We have Iwasawa modules coming from Galois groups of these extensions, along with the following integral units in these fields: 
\begin{align*}
	Y_\infty^-&={\rm Gal}(M_\infty^-/K_\infty^-(\psi^-)),\\
	\cY_\infty^-&={\rm Gal}(\cM_\infty^-/\cK_\infty^-(\psi^-)),\\
	X_\infty^-&={\rm Gal}(L_\infty^-/K_\infty^-(\psi^-)),\\
	\cX_\infty^-&={\rm Gal}(\cL_\infty^-/\cK_\infty^-(\psi^-)),\\	
	\cE_\infty^-&=\textrm{the group of global units in $\cK_\infty^-(\psi^-)$},\\
	\cU_\infty^-&=\textrm{the group of local $1$-units in the completion of $\cK_\infty^-(\psi^-)$ above $\p$}.
\end{align*}

We note that $Y_\infty^-, X_\infty^-$ are naturally modules over $\mathbb{Z}_p\lb{\rm Gal}(K_\infty^-(\psi^-)/K)\rb$, while $\cE_\infty^-, \cU_\infty^-, \cY_\infty^-, \cX_\infty^-$ are naturally modules over $\Z_p\lb\!\Gal(\cK_\infty^-(\psi^-)/K)\rb$. In either case, we append $(\psi^-)$, e.g.\ $Y_\infty^-(\psi^-)$, to denote their $\psi^-$-isotypical components. Thus $Y_\infty^-(\psi^-),X_\infty^-(\psi^-)$ are $\Lambda^-$-modules and $\cE_\infty^-(\psi^-),\cU_\infty^-(\psi^-),  \cY_\infty^-(\psi^-), \cX_\infty^-(\psi^-)$ are $\til\Lambda^-$-modules, and of all of these are known to be finitely generated. They are related by isomorphisms 
\[
Y_\infty^-(\psi^-) \cong \cY_\infty^-(\psi^-)/\cI\cY_\infty^-(\psi^-), \qquad 
X_\infty^-(\psi^-) \cong \cX_\infty^-(\psi^-)/\cI\cX_\infty^-(\psi^-). 
\]

Class field theory then yields the ``fundamental'' exact sequence of $\til\Lambda^-$-modules
\begin{equation}
	\label{eq:CFT1}
	0\longrightarrow \cE_\infty^-(\psi^-)\longrightarrow \cU_\infty^-(\psi^-)
	\longrightarrow \cY_\infty^-(\psi^-)\longrightarrow \cX_\infty^-(\psi^-)\longrightarrow 0.
\end{equation}

\begin{prop}[Anti-cyclotomic main conjecture \cite{rubin1991, HT1994, hida-quadratic}]
	\label{prop: AC main conj}
	The characteristic ideal in $\Lambda^-$ of $Y^-_\infty(\psi^-)$ is generated by $L^-_p(\psi^-)$, and the characteristic ideal of $\cY^-_\infty(\psi^-)_\bI$ is generated by $\til L^-_p(\psi^-)_\bI$. In particular, 
	\[
	\cY^-_\infty(\psi^-) = 0 \iff Y^-_\infty(\psi^-) = 0 \iff L_p^-(\psi^-) \in (\Lambda^-)^\times.
	\]
\end{prop} 

We apply the main conjecture toward the control of $\cX^-_\infty(\psi^-)$. 
\begin{prop}
	\label{prop: X control}
	The following equivalences hold.
	\begin{enumerate}[label=(\roman*)]
		\item $X(\psi^-) = 0 \iff X^-_\infty(\psi^-) = 0 \iff \cX^-_\infty(\psi^-) = 0$, and this is implied by
		at least one of $L_p^-(\psi^-)$ and $L_p^-(\psi^-)^\ast$ being a unit in $\Lambda^-$. 
		\item $\cX^-_\infty(\psi^-)$ is infinite if and only if there exists some irreducible component $\Spec(\bI) \subset \Spec(\til\Lambda^-)$ such that $\til L_p^-(\psi^-)_\bI$ and $\til L_p^-(\psi^-)^\ast_\bI$ have a non-trivial common prime factor $P \subset \bI$ of characteristic zero. 
	\end{enumerate}
\end{prop}

\begin{proof}
	The equivalences of (i) (and the leftmost equivalence of Proposition~\ref{prop: AC main conj}) follow from Nakayama's lemma. For example, $X(\psi^-) \cong \cX^-_\infty(\psi^-)/\m_{\til\Lambda^-}\cX^-_\infty(\psi^-)$. The relation of the vanishing of $X^-_\infty(\psi^-)$ to the $L$-functions in the statement of (i) follows from Proposition \ref{prop: AC main conj} and its variant for the module $\mathcal{Y}^-_\infty(\psi^-)^*$ obtained by swapping the roles of $\p$ and $\p^*$. 
	
	To prove (ii), for convenience write $\cY$ (resp. $\cY^*$) for $\cY^-_\infty(\psi^-)$ (resp. $\cY^-_\infty(\psi^-)^*$), $Y$ for $Y^-(\psi^-)$, and $\cX$ for $\cX^-_\infty(\psi^-)$. Because $\cX$ is a quotient of $\cY$, and we know from Proposition \ref{prop: mu=0} that the $\mu$-invariant of $Y$ is zero, Lemma~\ref{lem: p-infinite} below implies that $\cX$ has a non-zero $p$-torsion-free quotient.
	
	Therefore $\cX[1/p]$ is a non-zero $\til\Lambda[1/p]$-module. By examining a choice of presentation \eqref{eq: tilde presentation}, we see that 
	\[
	\til\Lambda[1/p] \lrisom \bigoplus_\bI (\til\Lambda/\bI)[1/p]
	\]
	is a regular ring. Therefore $\cX[1/p]$ is supported at some maximal ideal of $(\til\Lambda/\bI)[1/p]$ for some choice of irreducible component $\Spec(\bI) \subset \Spec(\til\Lambda^-)$. Since we know that $\cX$ is a quotient of both $\cY$ and $\cY^*$ (whose characteristic ideals on each $\bI$ are associated to $\til L_p^-(\psi^-)_\bI$ by Proposition \ref{prop: AC main conj}), this means that $\mathrm{Char}_\bI(\cY_\bI)$ and $\mathrm{Char}_\bI(\cY^\ast_\bI)$ have a common factor. By Proposition \ref{prop: AC main conj} this is a common factor of $\til L_p^-(\psi^-)_\bI$ and $\til L_p^-(\psi^-)^\ast_\bI$ as well. 	
\end{proof}

\begin{lem}
	\label{lem: p-infinite}
	Let $\cZ$ be a finitely generated $\til\Lambda^-$-module. If $\cZ$ is infinite and $p$-power torsion, then the $\mu$-invariant of $Z := \cZ \otimes_{\til\Lambda^-} \Lambda^-$ as a $\Lambda^-$-module is positive. 
\end{lem}

\begin{proof}
	Because $\cZ$ is finitely generated and $p$-power torsion, there exists some $t \in \Z_{\geq 1}$ such that $p^t \cdot \cZ = 0$. Because of the surjections $\cdot p^s : \cZ/p \rsurj p^s\cZ/p^{s+1}\cZ$, the infinitude of $\cZ$ implies that $\cZ/p$ is infinite. Because $\til\Lambda^-/p$ is generated over $\Lambda^-/p$ by adjoining finitely many nilpotents (via a choice of presentation \eqref{eq: tilde presentation}), the same argument implies that $Z/p$ is infinite. As $Z$ is supported on $\Spec(\Lambda^-/p) \subset \Spec(\Lambda^-)$, this means that the $\mu$-invariant of $Z$ as a $\Lambda^-$-module is positive. 
\end{proof}

\subsection{Galois cohomology with support, and duality}

In this section, we compute some Galois cohomology groups often known as ``Iwasawa cohomology,'' relating them to the Iwasawa-theoretic objects defined in \S\ref{subsec: ac Iwasawa objects}. We follow the approach of \cite[\S6]{WWE1} and parts of \cite[\S2]{WWE2}, using the notation for Galois cohomology established in \S\ref{subsec: notation}. 

We will make use of the modules $\Lambda^-_\dia$, $\til\Lambda^-_\dia$ equipped with the canonical characters defined in \eqref{eq: AC characters}, and respectively denote by
\[
\Lambda^-_\#,\qquad \til\Lambda^-_\#
\]
the same underlying modules equipped with the inverse of those characters.

Let $S' \subset S_K$ denote some subset of places of $K$. We will study the cohomology of a $G_{K,S}$-module $T$ with support in $S'$, denoted $H^i_{(S')}(O_K[1/pN], T)$, which is defined to be the cohomology of the cone of the morphism of complexes
\[
C^\bullet_{(S')}(O_K[1/pN], T) := \mathrm{Cone}\left(C^\bullet(O_K[1/pN], T) \lra \bigoplus_{s \in S'} C^\bullet(K_s, T)\right).
\]
This gives rise to the standard long exact sequence in cohomology, whose terms in a single degree are
\begin{equation}
\label{eq:LES cone}
 H^i_{(S')}(O_K[1/pN],T) \lra H^i(O_K[1/pN],T) \lra \bigoplus_{s \in S'} H^i(K_s,T) 
 \end{equation}
We see that we have $H^i_{(\emptyset)} \cong H^i$. 

The following module-theoretic version of global Tate duality will be useful. 
\begin{prop}
\label{prop: duality}
Let $T$ a free module of finite rank over a complete local Noetherian $\Z_p$-algebra $R$ that is Gorenstein. Equip $T$ with an $R$-linear action of $G_{K,S}$. Let $V$ denote a finitely generated $R$-module (with a trivial $G_{K,S}$-action). Then there is a spectral sequence
\[
E^{i,j}_2 = \Ext^i_{R}(H^{3-j}_{(S')}(O_K[1/pN],T^*(1)), V) \Rightarrow H^{i+j}_{(S_K\setminus S')}(O_K[1/pN], T \otimes_R V),
\]
where $T^*$ denotes the $R$-linear dual module with the contragredient $G_{K,S}$-action. 
\end{prop}

\begin{proof}
This follows directly from \cite[Prop.\ 2.2.1]{WWE2} when $R$ is regular and $S' \in \{S_K, \emptyset\}$. We explain how to adapt the proof of \textit{loc.\ cit.}\ to prove this proposition. 

The generalization to an arbitrary subset $S' \subset S$ follows from the fact that classical Poitou--Tate duality (i.e.\ for $T$ a finite abelian group and $T^*$ its Pontryagin dual) holds for an arbitrary $S' \subset S$. For this, see e.g.\ \cite[Thm.\ B.1]{GV2018}. 

The first part of the proof of \cite[Prop.\ 2.2.1]{WWE2} reduces to the case $V = R$. It relies on a particular case of \cite[Prop.\ 5.4.3]{nekovar2006}, which is an expression of this duality in the derived category of $R$-modules. In this setting, $T$ may be a bounded complex and $T^*$ is a bounded complex representing $\mathrm{RHom}_R(T, \omega_R)$, where $\omega_R$ is a dualizing complex for $R$. In our statement, $R$ is assumed to be Gorenstein (thus one may let $\omega_R$ be $R[0]$) and $T$ is $R$-free, so we may use the standard $R$-linear dual module $T^*$.

The second part of the proof of \cite[Prop.\ 2.2.1]{WWE2} uses \cite[Prop.\ 3.1.3]{LS2013}, and there is no difference in its application. 
\end{proof}

\subsection{Kummer theory for anti-cyclotomic Iwasawa cohomology}

We are interested in Galois cohomology with coefficients in $T = \til\Lambda^-_\#(1)$, which, in view of the review of Iwasawa cohomology in \cite[\S6.1]{WWE1}, is the case of Kummer theory. 
 
\begin{prop}[Kummer theory]
\label{prop: Kummer}
The long exact sequence \eqref{eq:LES cone} where $T = \til\Lambda^-_\#(1)$ and $S' = \{\p\}$, namely, 
\begin{align*}
0& = H^1_{(\p)}(O_K[1/Np], \til\Lambda^-_\#(1))  \ra H^1(O_K[1/Np], \til\Lambda^-_\#(1))  \ra H^1(K_{\p}, \til\Lambda^-_\#(1)) \\
 &\ra H^2_{(\p)}(O_K[1/Np], \til\Lambda^-_\#(1)) \ra H^2(O_K[1/Np], \til\Lambda^-_\#(1)) \ra H^2(K_{\p}, \til\Lambda^-_\#(1)) = 0,
\end{align*}
is canonically isomorphic to the fundamental exact sequence \eqref{eq:CFT1}. In particular, we have isomorphisms
\begin{align}
\label{eq: GC Y}
H^2_{(\p)}(\cO_K[1/pN], \til\Lambda^-_\#(1)) \cong \cY_\infty^-(\psi^-), \\ 
\label{eq: GC X}
H^2(\cO_K[1/pN], \til\Lambda^-_\#(1)) \cong \cX_\infty^-(\psi^-).
\end{align}
\end{prop}

The proof technique is similar to that of \cite[\S6]{WWE1}, which applies when $\Q$ is replaced by $K$. 

\begin{lem}
\label{lem:global kummer}
There are canonical isomorphisms
\[
H^1(O_K[1/pN],\til\Lambda^-_\#(1)) \cong \cE^-_\infty(\psi^-) 
\]
and \eqref{eq: GC X}. 
\end{lem}

\begin{proof}
The isomorphism with $\cE^-_\infty(\psi^-)$ appears in \cite[Cor.\ 6.1.3]{WWE1}. The isomorphism \eqref{eq: GC X} follows just as in the proof of \cite[Cor.\ 6.3.1]{WWE1}. Namely, because $\psi^-$ is non-trivial at all primes of $K$ dividing $N$, and is clearly not congruent modulo $p$ to $\Z_p(1)$, taking the $\psi^-$-component of the long exact sequence appearing in the statement of \cite[Cor.\ 6.1.3]{WWE1} results in the desired isomorphism. 
\end{proof}

Similarly, we have the Kummer isomorphism
\[
H^1(K_{\p}, \til\Lambda^-_\#(1)) \cong \cU^-_\infty,
\]
with respect to which the natural maps $H^1(O_K[1/pN],\til\Lambda^-_\#(1)) \ra H^1(K_{\p}, \til\Lambda^-_\#(1))$ and $\cE^-_\infty \rinj \cU^-_\infty$ are compatible. Because $H^0(K_{\p}, \til\Lambda^-_\#(1)) = 0$, it follows from \eqref{eq:LES cone} that $H^1_{(\p)}(O_K[1/pN], \til\Lambda^-_\#(1)) = 0$. By local Tate duality (``derived'' as in Proposition \ref{prop: duality}, which can be applied with $R = \til\Lambda^-$ since this ring is a complete intersection, given its presentation \eqref{eq: tilde presentation}), the vanishing of $H^2(K_{\p}, \til\Lambda^-_\#(1))$ follows from the fact that $H^0(K_{\p}, \til\Lambda^-_\dia/I) = 0$ for all ideals $I \subset \til\Lambda^-$. 

It remains to establish \eqref{eq: GC Y} compatibly with the isomorphisms we have already drawn. 
Using the proof of \cite[Prop.\ 5.3.3(b)]{lim2012} (which is written for $S' = S_K$, but applies to any choice of $S'$, such as $S' = \{\p\}$), we find that
\[
H^2_{(\p)}(O_{K}[1/Np], \til\Lambda^-_\#(1)) \cong \varprojlim_r H^2_{(\p)}(O_{K_r}[1/Np], (\psi^-)^{-1}(1)),
\]
where $K_{\mathfrak{C}p^\infty, p}^- \supset K_r$ is a sequence of $p$-abelian extensions of $K$ cut out by a fundamental system of open neighborhoods of the identity in $Z_p^-$, and the maps of the limit are corestrictions. We use classical Poitou--Tate duality to draw a canonical isomorphism to
\[
\varprojlim_{r,m} \Hom(A_{r,m}, \Q_p/\Z_p), \quad \text{where } A_{r,m} := H^1_{(N\p^*)}(O_{K_r}[1/Np], \psi^- \otimes_{\Z_p} \Z/p^m\Z).
\]
Because $\psi^-$ has order prime to $p$ and is non-constant on $G_\mathfrak{q}$ for all primes $\mathfrak{q}$ of $K_r$ dividing $N\p^*$, we deduce
\[
A_{r,m} \cong H^1_{(N\p^*)}(O_{K_rK(\psi^-)}[1/Np], \Z/p^m\Z)^{\psi^-}
\]
from the analogous isomorphisms for the cohomology theories $H^1(O_{K_r}[1/Np], -)$ or $H^1(K_\mathfrak{q},-)$ replacing $H^1_{(N\p^*)}(O_{K_r}[1/Np], -)$. Because taking the $\psi^-$-part kills the contribution of the cokernel of 
\[
H^0(O_{K_rK(\psi^-)}[1/Np], \Z/p^m\Z) \ra \prod_{\mathfrak{q'}\mid \mathfrak{q} \mid N\p^*} H^0((K_r K(\psi^-))_{\mathfrak{q'}}, \Z/p^m \Z),
\]
to $H^1_{(N\p^*)}(O_{K_rK(\psi^-)}[1/Np], \Z/p^m\Z)$, we know that $A_{r,m}$ is canonically isomorphic to the group of $\psi^-$-equivariant homomorphisms from the absolute Galois group of $K_rK(\psi^-)$ to $\Z/p^m\Z$ that are trivial on $G_{\mathfrak{q'}}$ for $\mathfrak{q'} \mid N\p^*$. 

We observe that $H^1(K_\mathfrak{q}, \bar\psi^-)=0$ for $\mathfrak{q} \mid N$ follows from assumption (4); likewise, $\ker(H^1(K_{\p^*}, \bar\psi^-) \ra H^1(K_{\p^*}^\mathrm{unr}, \bar\psi^-)) = 0$ follows from assumption (2). It follows that triviality of an element of $A_{1,1} = H^1(O_K[1/Np], \bar\psi^-)$ at the decomposition group at $\mathfrak{q} \mid N\p^*$ is equivalent to being trivial on the inertia group at $\mathfrak{q}$. It is straightforward to generalize this conclusion to general $K_r$ and $m \geq 1$ from this base case ($K_1 = K$ and $m=1$), as $K_r/K$ is ramified only at $\p$. By definition of $\cY^-_\infty(\psi^-)$, we deduce a canonical isomorphism
\[
A_{r,m} \cong \Hom_{\Z_p}(\cY^-_\infty(\psi^-) \otimes_{\til\Lambda^-} W'\lb \Gal(K_r/K)\rb, \Z/p^m\Z).
\]
Applying this isomorphism to the limits over $m$ and $r$ above, we deduce \eqref{eq: GC Y}.

To complete the proof of Proposition \ref{prop: Kummer}, it remains to check that the connecting map in \eqref{eq:LES cone} is compatible with the map $\cU^-_\infty(\psi^-) \ra \cY^-_\infty(\psi^-)$ coming from the Artin symbol, and that the map from $H^2_{(\p)}$ to $H^2$ in \eqref{eq:LES cone} is compatible with $\cY^-_\infty(\psi^-) \rsurj \cX^-_\infty(\psi^-)$. This is standard, so we omit it. 

\section{Residually CM Hecke algebras}
\label{sec: CM and non-CM}

Continuing from \S\ref{sssec: CM-hida}, we apply (0)--(4) of \S\ref{subsec: setup} to describe the structure of $\bT$. 

\subsection{CM Hecke algebras and associated Galois representations}
\label{subsec: CM Hecke Gal reps}
The point of this section is to study the structure of the CM Hecke algebra $\bT^\CM$, a quotient of $\bT$ which we defined in \S\ref{sssec: CM-hida}. This will mainly be applied in \S\ref{sec: resultant}. We do this by understanding the relation of $\bT^\CM$ to Galois representations. 

Recall that ${\rm Spec}(\bT^{\rm CM})\subset{\rm Spec}(\bT)$ is the minimal closed subscheme containing all of the irreducible components of $\mathbb{T}$ with CM by $K$, and $\rho_{\CM}=\rho_\bT\otimes_\bT\bT^{\CM}$ denotes the restriction of $\rho_\bT$ to this CM locus. Recall that $\frc \subset O_K$ denotes the prime-to-$p$ Artin conductor of $\psi : G_{K,S} \ra W^\times$. 

We will also use the notation for anti-cyclotomic Iwasawa theory established at the beginning of \S\ref{subsec: AC extns}. We add to it the following definitions. Let $\cK_{\frc\p^\infty}$ denote the ray class field of $K$ modulo $\frc\p^\infty$, with ray class group $Z$. Let $Z_p$ denote the maximal pro-$p$ quotient of $Z$, which is also naturally a direct factor. Also let $\Gamma_K^\p \simeq \Z_p$ be the maximal torsion-free quotient of $Z_p$. 

We see that $\psi$ factors through a character on the quotient $Z^{(p)} := Z/Z_p$, resulting in a projection 
\[
\pi_\psi : W\lb Z \rb \rsurj W\lb Z_p\rb
\]
sending a group-like element $(z_p, z^{(p)}) \in Z$ to $\psi(z^{(p)})z_p \in W\lb Z_p\rb$. In the following, we let
\[
\til \Lambda := W\lb Z_p\rb, \quad \Lambda := W\lb \Gamma^\p_K\rb,
\]
which are equipped with a canonical surjection $\til\Lambda \rsurj \Lambda$. 

Similarly to \eqref{eq: AC characters}, we denote by 
\[
\til \dia : G_{K,S} \ra \til \Lambda^\times, \qquad 
\dia : G_{K,S} \ra \Lambda^\times
\]
the natural characters arising from projection $G_{K,S} \rsurj Z$ and $\pi_\psi$ (resp.\ also via $\til\Lambda \rsurj \Lambda$). Each of $\til\Lambda$ and $\Lambda$ are complete local Noetherian $W$-algebras with residue field $\F$, and these two characters are residually equal to $\bar\psi$. 

Similarly to Definition \ref{defn: Dord}, a deformation $\psi_A$ of $\bar\psi$ to $A \in \mathrm{CNL}_W$ is called \emph{minimal} at a prime $\frq$ of $K$ if reduction modulo $\m_A$ induces an isomorphism $\psi_A(I_\frq) \risom \bar\psi(I_\frq)$. It is standard (see e.g.\ \cite[\S1.4]{mazur1989}) that $\til\Lambda$ with $\til\dia$ is a universal deformation of $\bar\psi$ as follows.
\begin{lem}
\label{lem: CM char def}
There is a canonical isomorphism $R_{\bar\psi} \risom \til\Lambda$, where $R_{\bar\psi}$ represents deformations $\psi_A : G_{K,S} \ra A^\times$ of $\bar\psi$ to $A \in \mathrm{CNL}_W$ that are minimal outside $\p$. 
\end{lem}

\begin{prop}
\label{prop: CM Galois}
Assume (0)--(4) of \S\ref{subsec: setup}. Induction $\Ind_K^\Q$ produces an isomorphism $\til \Lambda \risom \bT^\CM$, arising from the isomorphism
\[
\rho_\CM \simeq \Ind_K^\Q \til\dia. 
\]
In particular, $\bT^\CM$ is a reduced complete intersection. 
\end{prop}
\begin{proof}
As pointed out in the proof of \cite[Prop.\ 5.7(2)]{hida2015}, since we are working in the minimal case (the tame level of our forms is equal to the prime-to-$p$ conductor of $\bar\rho$) this claim follows immediately from Lemma \ref{lem: CM char def} as long as $\bar\rho$ is induced only from $K$ among all quadratic fields. By Proposition~5.2(2) in \emph{loc.\ cit.}, assumption (3) of \S\ref{subsec: setup} implies this. 
\end{proof}

There is a notion of a Zariski-closed \emph{maximal induced locus for $\Ind_K^\Q$} in $\Spec R$, where $R \in \mathrm{CNL}_W$ supports a Galois representation $\rho_R : G_{\Q,S} \ra \GL_2(R)$ deforming $\bar\rho = \Ind_K^\Q \bar\psi$. (See, for example, \cite{DW2018}.) 
\begin{cor}
\label{cor: induced locus}
The kernel $I_\CM$ of the canonical surjection $\bT \rsurj \bT^\CM$ cuts out the maximal induced locus for $\rho_\bT : G_{\Q,S} \ra \GL_2(\bT)$. 
\end{cor}

\begin{proof}
By Theorem \ref{thm: Rord=Tord} and the proof of Lemma \ref{lem: GE Gal reps}, any Zariski-closed locus in $\Spec(\bT)$ is determined by the Galois deformations it supports. Thus the corollary follows from Proposition \ref{prop: CM Galois} and the fact that the CM condition of \eqref{eq: CM condition} is equivalent to the induced condition: $R_{\bar\psi}$ parameterizes all characters $\psi_A$ such that $\Ind_K^\Q \psi_A$ is $p$-ordinary of tame level $N$, and injects into $\bT^\CM$. 
\end{proof}

Proposition \ref{prop: CM Galois} also allows us to the study weight map $\Lambda_\Q \ra \bT^\CM \cong \til\Lambda$. 
\begin{lem}
\label{lem: diaQ and diaK}
The composite map $\beta$ of $\Lambda_\Q \ra \bT \rsurj \bT^\CM \cong \til\Lambda$ satisfies
\begin{equation}
\label{eq: det of CM}
\beta \circ \dia_\Q\vert_{G_{K,S}} = \til\dia \cdot \til\dia^c.
\end{equation}
Also, $\beta$ is an isomorphism if and only if $p \nmid h_K$. 
\end{lem}

\begin{proof}
The first statement follows from \eqref{eq: det of ord}, as Proposition \ref{prop: CM Galois} tells us that $\rho_\CM\vert_{G_{K,S}} \simeq \til\dia \oplus \til\dia^c$. 

A presentation of $\Lambda_\Q$ as a power series ring $W\lb t\rb$ arises from $t \mapsto \lr{\gamma}_\Q-1$, where $\gamma$ is any element of $I_p$ that projects to a generator of the Galois group of the maximal cyclotomic $\Z_p$-extension of $\Q$. From the presentation of $\til\Lambda$ given above, and the equality \eqref{eq: det of CM}, we see that $\Lambda_\Q \ra \til \Lambda$ is an isomorphism if and only if $\gamma-1 \in \Lambda_\Q$ maps to a power series generator of $\til\Lambda$ if and only if $\gamma$ maps to a generator of $Z_p$. This is the case if and only if $I_p \risom I_\p \subset G_{K,S}$ surjects onto $Z_p$, which is equivalent to $p \nmid h_K$. 
\end{proof}

\subsection{Congruence module of the CM locus}
\label{subsec: CM and nCM}

We recall Hida's determination of the characteristic ideal of the congruence module of the CM  locus ${\rm Spec}(\bT^{\CM})\subset{\rm Spec}(\bT)$. 

For this, and for the further study of non-induced deformations of induced representations in \S\ref{sec: conormal}, we identify how  anti-cyclotomic objects over $\til\Lambda^-_{W'}$ set up in \S\ref{sec: ac iwasawa} (like $\til L_p^-(\psi^-)$) are presented over $\til\Lambda$. 

\smallskip
\noindent
\textbf{Notation.} 
In \S\ref{sec: ac iwasawa} only, we denoted $\til\Lambda^-_{W'}, \Lambda^-_{W'}$ without the subscript. Elsewhere, the relationship between the two notations is 
\[
\til\Lambda^- := \til\Lambda_{W'}^- \otimes_{W'} W, \quad \Lambda^- := \Lambda_{W'}^- \otimes_{W'} W,
\]
as in \eqref{eq: Z to Z-minus}. We mildly abuse notation by continuing to use $\til\dia_-$ (resp.\ $\dia_-$) for the base change of this character (as defined in \S\ref{subsec: AC extns}) via $\otimes_{\til\Lambda_{W'}^-} \til\Lambda^-$ (resp.\ $\otimes_{\Lambda_{W'}^-} \Lambda^-$).

These anti-cyclotomic Iwasawa algebras $\til\Lambda^-$ and $\Lambda^-$ are domains of isomorphisms 
\begin{equation}
\label{eq: Z to Z-minus}
\til \delta: \til\Lambda^-  \lrisom \til\Lambda  , \qquad 
\delta : \Lambda^-  \lrisom \Lambda 
\end{equation}
that are characterized by inducing the equality of $\til\Lambda$ (resp.\ $\Lambda$)-valued characters 
\[
\til\delta \circ \til\dia_- \cong  \til\dia \cdot (\til\dia^c)^{-1}, \quad \text{ resp.\ }  
\delta \circ \dia_- \cong  \dia \cdot (\dia^c)^{-1}.
\]
They are induced by the canonical isomorphism $\iota:Z_p\cong Z_p^-$ of \cite[pg.~636]{hida2015}.

Because $\bT$ and $\bT^\CM$ are reduced under our running hypotheses (see Lemma \ref{lem: reduced}, Proposition \ref{prop: CM Galois}), there is a unique algebra decomposition of total fraction fields 
\[
{\rm Frac}(\bT)\simeq{\rm Frac}(\bT^{\rm CM})\oplus X.
\]  
Letting $\bT^{\rm nCM}$ be the projected image of $\bT$ in $X$, we have $I_\CM\hookrightarrow\bT^{\rm nCM}$ and $\bT^\mathrm{nCM}$ is $\Lambda_\Q$-torsion-free. The quotient $\bT^{\rm nCM}/I_\CM$ is the \emph{congruence module}, in the sense of e.g.\ \cite[\S{5.3.3}]{hida-MFG}, between the two components ${\rm Spec}(\bT^{\rm nCM})$ and ${\rm Spec}(\bT^{\rm CM})$ of ${\rm Spec}(\bT)$.

\begin{thm}[Hida]
\label{thm: CM and non-CM components}
Assume conditions (0)--(4) of \S\ref{subsec: setup}. Then 
\[
\bT^{\rm nCM}/I_\CM\simeq \til\Lambda/(\til L_p^-(\psi^-)).
\] 
Moreover, we have the following commutative diagram with exact rows and columns:
\[
\xymatrix{
& I_\mathrm{nCM} \ar[r]^-\sim \ar[d] & (\til L_p^-(\psi^-)) \ar[d] \\
I_\CM \ar[r] \ar[d]^\wr & \bT \ar[r] \ar[d] & \bT^\CM \ar[d]\cong \til \Lambda \\
	I_\CM \ar[r] & \bT^\mathrm{nCM} \ar[r] &  \til\Lambda/(\til L_p^-(\psi^-)).
}
\]
\end{thm}

\begin{proof}
This is shown in \cite[Thm.~7.2]{hida2015}, building on the proof originating from \cite{MT} of the anti-cyclotomic main conjecture (Proposition~\ref{prop: AC main conj}). There we find the additional assumption that $\bar\psi$ is ramified at $\p$ and $p \nmid \phi(N)$. However, the first assumption is used only in order to apply \cite[Thm.\ 7.1]{hida2015} and ensure that $\bT$ is a Gorenstein ring. In our setting, this follows from Theorem \ref{thm: Rord=Tord}. The assumption $p \nmid \phi(N)$ is used to rule out the failure of minimality of CM families, but our assumptions guarantee minimality. 
\end{proof}

\section{Computation of conormal modules using Shapiro's lemma}
\label{sec: conormal}

In this section, we give an explicit interpretation of the conormal module of the closed CM locus inside the $p$-ordinary (resp.\ $p$-locally split) locus. From this, we deduce the main theorem (Theorem \ref{thm:main}) in \S\ref{subsec: proof of main theorems}. 

\subsection{Conormal modules}

Assume (0)--(4) of \S\ref{subsec: setup} in all that follows. We will study the conormal modules of the closed subspaces
\begin{enumerate}
\item $\Spec(\bT^\CM) \subset \Spec (R^\ord) \cong \Spec (\bT)$, and 
\item $\Spec(\bT^\CM) \subset \Spec (R^\spl)$ 
\end{enumerate}

We establish notation 
\[
J := I_\CM = \ker\left(\bT \rsurj \bT^\CM\right), \qquad 
J^s := \ker\left(R^\spl \rsurj \bT^\CM\right),
\]
so that these conormal modules may be denoted 
\[
(1) \ J/J^2 \quad \textrm{ and } \quad (2)\ J^s/(J^s)^2,
\]
respectively. For convenience, we will use the canonical isomorphism $\til \Lambda \cong \bT^\CM$ of Lemma \ref{prop: CM Galois} and write $\til \Lambda$ in the place of $\bT^\CM$ throughout this section, studying $J/J^2$ and $J^s/(J^s)^2$ as $\til{\Lambda}$-modules. 

We also let $\rho$ represent a member of the strict equivalence class (the equivalence relation defining $D^\spl(\til\Lambda)$; see \S\ref{subsec: spl def ring}) of $\rho_\CM$ characterized by demanding that
\[
\rho(c) = \ttmat{0}{1}{1}{0} \quad  \text{ and } \quad \rho\vert_{G_{K,S}} = \ttmat{\dia}{0}{0}{\dia^c}.
\]
Indeed, the left equality fixes a basis up to ordering and scaling, and the second condition fixes the order. 

Let $\til\Lambda[V]$ denote $\til\Lambda \oplus V$ as a square-zero augmented $\til\Lambda$-algebra, so $V^2=0$. For $R^\ast \in \{R^\ord, R^\spl, \til\Lambda\}$, let $\Hom_\CM(R^\ast,\til\Lambda[V])$ denote the fiber of 
\begin{equation}
\label{eq: CM condition over tL}
\Hom_{\Lambda_\Q}(R^\ast, \til\Lambda[V]) \lra \Hom_{\Lambda_\Q}(R^\ast, \til\Lambda)
\end{equation}
over the canonical $\Lambda_\Q$-algebra homomorphism $\phi_\rho : R^\ast \rsurj \til\Lambda$ induced by $\rho$. Here we use the isomorphism $\til \Lambda \cong R_{\bar\psi}$ of Lemma~\ref{lem: CM char def} to speak of the identity automorphism of $\til\Lambda$ induced by $\rho$. Note that $\Hom_\CM(R^\ast, \til\Lambda[V])$ has a natural $\til\Lambda$-module structure coming from the second argument. 

In what follows, we will use, without further comment, the following concrete interpretation of $\Hom_\CM(R^\ast, \til\Lambda[V])$ as a modified deformation functor $D^*_\rho$. 
\begin{lem}
\label{lem: concrete def module}
Let $D^* \in \{D^\ord, D^\spl, D_\psi\}$ be the deformation problem represented by $R^\ast$. There is a canonical bijective correspondence between $\Hom_\CM(R^\ast, \til\Lambda[V])$ and the subset $D^*_\rho(\til\Lambda[V]) \subset D^\ast(\til\Lambda[V])$ consisting of the image of strict equivalence classes within the set of homomorphisms $\rho_V: G_{\Q,S} \ra \GL_2(\til\Lambda[V])$ such that $\rho_V \pmod{V} = \rho$ and $\det \rho_V = \dia_\Q \otimes_{\Lambda_\Q} \til \Lambda$. 
\end{lem}

\begin{rem}
Strict equivalence classes within $D^*_\rho$ amount to conjugacy classes by $1 + M_2(V) \subset \GL_2(\til\Lambda[V])$, which is why it is non-trivial to take the image in $D^*$. 
\end{rem}

\begin{proof}
Let $\rho_V$ represent a strict equivalence class in $D^*$ that is the image of a strict equivalence classes in $D^*_\rho$. Then $\rho_V \pmod{V} \simeq \rho$ and $\det \rho_V = \dia_\Q$. The first condition is equivalent to the map $\phi_{\rho_V} : R^* \ra \til\Lambda[V]$ being induced by $\rho_V$ composing with $\til\Lambda[V] \rsurj \til\Lambda$ to produce $\phi_\rho$. By examining \eqref{eq: det of ord}, we see that the second condition is equivalent to $R^* \ra \til\Lambda$ being a $\Lambda_\Q$-algebra homomorphism. Conversely, any strict equivalence class in $D^*(\til \Lambda[V])$ that satisfies both conditions contains a representative $\rho_V$ of a strict equivalence class in $D^*_\rho(\til\Lambda[V])$, and it is clear that such a class is unique. 
\end{proof}

We also record the relationship between the $\Hom_\CM(R^\ast, \til\Lambda[V])$, which follows directly from the surjections $R^\ord \rsurj R^\spl \rsurj \til\Lambda$. 
\begin{prop}
\label{prop: interpret conormal modules}
The conormal modules are characterized as $\til\Lambda$-modules by 
\begin{align*}
\Hom_{\til\Lambda}(J/J^2, V) \cong \Hom_\CM(R^\ord, \til\Lambda[V])/\Hom_\CM(\til\Lambda, \til\Lambda[V]), \\
\Hom_{\til\Lambda}(J^s/(J^s)^2, V) \cong \Hom_\CM(R^\spl, \til\Lambda[V])/\Hom_\CM(\til\Lambda, \til\Lambda[V]),
\end{align*}
for all finitely generated $\til\Lambda$-modules $V$. 
\end{prop}

\noindent
\textbf{Notation.} We will write $\rho_V$ for a homomorphism
\[
\rho_V : G_{\Q,S} \ra \GL_2(\til\Lambda[V])\quad \text{such that}\quad\rho_V\;({\rm mod}\;{V}) = \rho \text{ and } \det\rho_V = \dia_\Q.
\]
That is, $\rho_V$ is a representative of $D^*_\rho(\til\Lambda[V])$. We also mildly abuse terminology by speaking of a deformation $\rho_V$, when really this is the strict equivalence class of $\rho_V$, and refer to $\rho_V$ as an element of $D^\ast_\rho(\til\Lambda[V])$ for $D^\ast_\rho \in \{D^\ord_\rho, D^\spl_\rho, D_{\bar\psi,\rho}\}$. 

Next we find these $\rho_V$ as elements of an $\Ext^1$-module.

\begin{lem}
\label{lem: injection of con module}
For any finitely generated $\til\Lambda$-module $V$ and $R^\ast \in \{R^\ord, R^\spl, \til\Lambda\}$, there exists a $\til\Lambda$-linear injection of 
\[
D^*_\rho(\til\Lambda[V]) = \Hom_\CM(R^\ast, \til\Lambda[V]) \rinj \Ext^1_{\til\Lambda[G_{\Q,S}]}(\rho, \rho \otimes_{\til\Lambda} V)
\]
determined by sending any $\rho_V \in D^*_\rho(\til\Lambda[V])$ to the extension class determined by the surjection
\[
\rho_V \rsurj \rho_V \otimes_{\til\Lambda[V]}  \til\Lambda = \rho.
\]
\end{lem}
\begin{proof}
The condition $\rho_V \in D^*_\rho(\til\Lambda[V])$ implies that $\rho_V \otimes_{\til\Lambda[V]}  \til\Lambda = \rho$.  One may then readily check that the kernel of $\rho_V \rsurj \rho$ is isomorphic to $\rho \otimes_{\til\Lambda} V$ (where $V$ has a trivial $G_{\Q,S}$-action). Then the map to $\Ext^1$ is injective because strict equivalence in $D^*_\rho$ amounts to conjugation by $1 + M_2(V)$. The fact that this map is $\til\Lambda$-linear is a functorial (in $V$) version of the standard fact (see e.g.\ \cite[pg.\ 399]{mazur1989}) that the tangent space of a deformation ring $R_\rho$ with residue field $k$ is given, as a $k$-vector space, to $\Hom(R, k[\epsilon]/\epsilon^2)$, and admits a canonical isomorphism of $k$-vector spaces to $\Ext^1_{k[G_{\Q,S}]}(\rho,\rho)$. 
\end{proof}

\subsection{Local conditions} 

Next we address the local conditions that define the deformation problems $D^\ord, D^\spl$, thereby determining the images of the injections of Lemma \ref{lem: injection of con module}. We will decompose the condition on the constancy of the determinant of Lemma \ref{lem: concrete def module} into a sum of local inertial conditions. 

First we address conditions at $p$. As we have seen, $\rho\vert_{G_{K,S}} \simeq \psi \oplus \psi^c$. Because $p$ splits in $K$ (and recall that we have designated $\p$ such that $G_\p \risom G_p$), we also have this decomposition of $\rho\vert_{G_p}$. The characters remain distinct after restriction to both $G_{K,S}$ and $G_\p$ because $\bar\psi\vert_{G_\p} = \bar\chi_1 \neq \bar\chi_2 = \bar\psi^c\vert_{G_\p}$, by the assumptions of \S\ref{subsec: setup}. Therefore, restriction to $G_{K,S}$ induces a canonical map 
\begin{equation}
\label{eq: key ext map}
\sigma^p : \Ext^1_{\til\Lambda[G_{\Q,S}]}(\rho, \rho \otimes V)
\ra
\begin{pmatrix}
\Ext^1_{\til\Lambda[G_p]}(\til\dia, \til\dia \otimes V) & 
\Ext^1_{\til\Lambda[G_p]}(\til\dia^c, \til\dia \otimes V) \\
\Ext^1_{\til\Lambda[G_p]}(\til\dia, \til\dia^c \otimes V) &
\Ext^1_{\til\Lambda[G_p]}(\til\dia^c, \til\dia^c\otimes V) 
\end{pmatrix}
\end{equation}
(where the matrix stands for the direct sum of its entries). For $1 \leq i,j \leq 2$, write $\sigma^p_{i,j}$ for the projection to the $(i,j)$-th coordinate of the target of $\sigma^p$. Likewise, write $\tau^p_{i,i}$ for the composition of $\sigma^p_{i,i}$ with 
\[
\Ext^1_{\til\Lambda[G_p]}(\til\dia^{c^{i+1}}, \til\dia^{c^{i+1}}\otimes V)  \ra \Ext^1_{\til\Lambda[I_p]}(\til\dia^{c^{i+1}}, \til\dia^{c^{i+1}}\otimes V).
\]

\begin{lem}
\label{lem: adjoint ordinary}
Let $V$ be a finitely generated $\til\Lambda$ module. 
\begin{enumerate}
\item The ordinary condition and $I_p$-constant determinant condition on the target of $\sigma^p$ are cut out by the kernel of $\sigma^p_{2,1} \oplus \tau^p_{1,1} \oplus \tau^p_{2,2}$. 
\item The split condition and $I_p$-constant determinant condition on the target of $\sigma^p$ are cut out by the kernel of $\sigma^p_{2,1} \oplus \sigma^p_{1,2} \oplus \tau^p_{1,1} \oplus \tau^p_{2,2}$.
\end{enumerate}
\end{lem}
\begin{proof}
This computation of the ordinary condition amounts to the study of ordinary deformation rings appearing in \cite[\S1.7, pg.\ 401]{mazur1989}, and a straightforward generalization to $D^\spl$. We provide more detail, and address the inertial determinant condition. 

A choice of $V$-valued cocycles $e = \sm{a}{b}{c}{d}$ representing a cohomology class in the codomain of $\sigma^p$ may be represented as 
\[
\begin{pmatrix}
a \in Z^1(\Q_p, V) &  b \in Z^1(\Q_p, \til\Lambda_\dia^- \otimes V) \\
c \in Z^1(\Q_p, \til\Lambda^-_\# \otimes V) & d \in Z^1(\Q_p, V)
\end{pmatrix},
\] 
where $\til\Lambda_\dia^- \otimes V$ is short for $\til\Lambda_\dia^- \otimes_{\Lambda^-} V$, and where $V$ is made to be a $\til\Lambda^-$-module via the homomorphism $\til\Lambda^- \ra \til\Lambda^-_{W'}  \otimes_{W'} W \risom \til\Lambda$ found in \eqref{eq: Z to Z-minus}. This data determines a homomorphism
\begin{equation}
\label{eq: rho_e}
\rho_e := \begin{pmatrix}
\til\dia \cdot (1 +  a) &  \til\dia^c b \\
\til\dia c & \til\dia^c\cdot (1 + d)
\end{pmatrix} : G_p \lra \GL_2(\til\Lambda[V]).
\end{equation}
Conjugation by $1 + M_{2\times 2}(V) \subset \GL_2(\til\Lambda[V])$ moves $e$ within its cohomology class. Therefore, a deformation of $\rho\vert_{G_p}$ to $\til\Lambda[V]$ satisfies the conditions of the deformation functor $D^\ord$ if and only if $c$ is a coboundary and $d\vert_{I_p} = 0$. The additional condition that the deformation of the determinant is trivial on $I_p$ is equivalent to $(a+d)\vert_{I_p} = 0$, so we must have $a\vert_{I_p} = 0$ as well. 

Similarly, a deformation of $\rho$ to $\til\Lambda[V]$ restricting to $G_p$ as $\rho_e$ determines an element of $D^\spl(\til\Lambda[V])$ with a trivial deformation of the determinant on $I_p$ if and only if both $b$ and $c$ are coboundaries and $d\vert_{I_p} = a\vert_{I_p} = 0$. 
\end{proof}

Next we address the conditions at primes $\ell \mid N$. This is fairly simple, as we have noted that the off-diagonal cohomology is trivial at $\ell$ in the proof of Lemma \ref{lem: local reduction}. We set up the maps $\sigma^\ell$, $\sigma^\ell_{i,j}$, and $\tau^\ell_{i,j}$ just as for the prime $p$, above. 

\begin{lem}
\label{lem: adjoint ell-constant}
Let $\ell \mid N$ be a prime. The condition of minimality at $\ell$ is cut out by the kernel of $\tau^\ell_{1,1} \oplus \tau^\ell_{2,2}$. 
\end{lem}
\begin{proof}
This condition is part (iii) of Definition \ref{defn: Dord}. As the codomains of $\sigma_{i,j}^\ell$ are zero for $(i,j) \in \{(1,2), (2,1)\}$, only the conditions cut out by $\tau^\ell_{1,1}$, $\tau^\ell_{2,2}$ remain. 
\end{proof}

Thus we have determined the image of the injections of Lemma \ref{lem: injection of con module}.
\begin{cor}
\label{cor: adjoint Selmer}
Let $V$ be a finitely generated $\til\Lambda$-module.
\begin{enumerate}
\item The image of 
\[
\Hom_{\rm CM}(R^\ord,  \til\Lambda[V]) \rinj \Ext^1_{\til\Lambda[G_{\Q,S}]}(\rho, \rho \otimes_{ \til\Lambda} V)
\]
is the kernel of $\sigma^p_{2,1} \oplus \bigoplus_{v \mid Np} \left(\tau^v_{1,1} \oplus \tau^v_{2,2}\right) $.
\item The image of 
\[
\Hom_{\rm CM}(R^\spl,  \til\Lambda[V]) \rinj \Ext^1_{\til\Lambda[G_{\Q,S}]}(\rho, \rho \otimes_{ \til\Lambda} V)
\]
is the kernel of $\sigma^p_{1,2} \oplus \sigma^p_{2,1} \oplus \bigoplus_{v \mid Np} \left(\tau^v_{1,1} \oplus \tau^v_{2,2}\right) $. 
\end{enumerate}
\end{cor}

\subsection{An explicit form of Shapiro's lemma}

Because $\rho \cong \Ind_K^\Q \til\dia$ (see Proposition~\ref{prop: CM Galois}), we can apply Shapiro's lemma to the domain of \eqref{eq: key ext map} to yield that
\[
\Ext^1_{\til\Lambda[G_{\Q,S}]}(\rho, \rho \otimes V) \cong 
\Ext^1_{\til\Lambda[G_{K,S}]}(\til\dia\oplus\til\dia^c, (\til\dia\oplus \til\dia^c) \otimes V). 
\]
We need to relate this isomorphism to \eqref{eq: key ext map}. For this, we develop, in this section, an explicit version of Shapiro's lemma for this particular case. 

In order to state it, we use the notation $(-)^c$ on an extension class as follows, extending the notation for representations of $G_K$ established in \S\ref{sssec: resid CM}: When $\rho_1, \rho_2$ are representations of $G_K$ and $e \in \Ext^1_{G_K}(\rho_2, \rho_1)$ is an extension class represented by the short exact sequence
\[
0 \lra \rho_1 \lra \rho_e \lra \rho_2 \lra 0,
\]
then we write $e^c \in \Ext^1_{G_K}(\rho_2^c, \rho_1^c)$ for the extension class of
\[
0 \lra \rho_1^c \lra \rho_e^c \lra \rho_2^c \lra 0.
\]
Using the canonical isomorphism between these $\Ext$-groups and group cohomology, we also use the notation $(-)^c$ for the map
\[
H^1(O_K[1/Np], \rho_2^* \otimes \rho_1) \lra H^1(O_K[1/Np], {\rho_2^c}^* \otimes \rho_1^c)
\]
induced by the map on $\Ext$-groups. 

Similarly, choosing matrix-valued representatives for the $\rho_i$ and choosing some cocycle $a \in Z^1(O_K[1/Np], \rho_2^* \otimes \rho_1)$, we may use the notion of $(-)^c$ that applies to homomorphisms: 
\[
a^c(\gamma) = a(c\gamma c)\; \text{ for } \gamma \in G_{K,S}. 
\]
We next show that these are compatible. 
\begin{lem}
With notation as above, if we write $\rho_a$ for the extension of $\rho_2$ by $\rho_1$ induced by the cohomology class of $a$, then the cohomology class of $a^c$ corresponds to the extension class of $\rho^c_a$. 
\end{lem}
\begin{proof}
Using the matrix valued representatives, we can write $\rho_a$ as a homomorphism 
\[
\begin{pmatrix}
\rho_1 & \rho_1 \cdot a \\
& \rho_2
\end{pmatrix}
\]
and observe that $\rho_a^c$ is represented by the homomorphism 
\[
\begin{pmatrix}
\rho_1^c & \rho_1^c \cdot a^c \\
& \rho_2^c
\end{pmatrix}. \qedhere
\]
\end{proof}

For notational convenience, in the statement of Proposition \ref{prop: explicit Shapiro} we use $\lr{}$ in place of $\til\dia$. 
\begin{prop}
\label{prop: explicit Shapiro}
The natural map 
\begin{equation}
\label{eq: key ext map K}
\sigma^K : \Ext^1_{\til\Lambda[G_{\Q,S}]}(\rho, \rho \otimes V)
\rightarrow
\begin{pmatrix}
\Ext^1_{\til\Lambda[G_{K,S}]}(\lr{}, \lr{} \otimes V) & 
\Ext^1_{\til\Lambda[G_{K,S}]}(\lr{}^c, \lr{} \otimes V) \\
\Ext^1_{\til\Lambda[G_{K,S}]}(\lr{}, \lr{}^c \otimes V) &
\Ext^1_{\til\Lambda[G_{K,S}]}(\lr{}^c, \lr{}^c\otimes V) 
\end{pmatrix}
\end{equation}
is injective, and its image is given by
\[
\left\{
\begin{pmatrix}
a & b \\
c & d
\end{pmatrix} \in 
\begin{pmatrix}
\Ext^1_{\til\Lambda[G_{K,S}]}(\lr{}, \lr{} \otimes V) & 
\Ext^1_{\til\Lambda[G_{K,S}]}(\lr{}^c, \lr{} \otimes V) \\
\Ext^1_{\til\Lambda[G_{K,S}]}(\lr{}, \lr{}^c \otimes V) &
\Ext^1_{\til\Lambda[G_{K,S}]}(\lr{}^c, \lr{}^c\otimes V) 
\end{pmatrix} \bigg\vert\ a^c = d, b^c = c
\right\}.
\]
\end{prop}
\begin{proof}
Shapiro's lemma tells us that $\sigma^K$ is injective. 

Choose $e = \sm{a}{b}{c}{d}$ in the group of cocycles whose cohomology class lies in the codomain of $\sigma^K$; for example, $b \in Z^1(O_K[1/Np], \til\dia \cdot (\til\dia^c)^{-1} \otimes V)$. This is a function $e : G_{K,S} \ra M_{2 \times 2}(V)$ that determines the homomorphism $\rho_e : G_{K,S} \ra \GL_2(\til\Lambda[V])$ (similar to \eqref{eq: rho_e}) given by
\[
\rho_e := \begin{pmatrix}
\til\dia (1+  a) &  \til\dia^c b \\
\til\dia c & \til\dia^c(1 + d)
\end{pmatrix} : G_{K,S} \lra \GL_2(\til\Lambda[V]).
\]
It extends to a function on $G_{\Q,S} = G_{K,S} \coprod G_{K,S}c$ that we denote by $\tilde \rho_e^C$, given by 
\[
\tilde \rho_e^C : G_{K,S}c  \ni \gamma c \mapsto \rho_e(\gamma) \cdot C\in \GL_2(\til\Lambda[V]) 
\]
(so, in particular, $\tilde\rho_e^C(c) = C$), where $C \in \GL_2(\til\Lambda[V])$ has order 2 and satisfies 
\[
C \equiv \sm{}{1}{1}{} \pmod{V}. 
\]

We observe that the set of lifts of $\rho$ to $\til\Lambda[V]$ is in bijection with the set of pairs $(e,C)$ such that $\tilde \rho_e^C$ is a homomorphism. We break the determination of the homomorphism condition on $\tilde \rho_e^C$ into cases. 

\smallskip
\noindent
\textbf{Case $C = \sm{}{1}{1}{}$.} When $C = \sm{}{1}{1}{}$, we claim that $\tilde \rho_e^C$ is a homomorphism if and only if $a^c = d$ and $b^c = c$, as cocycle functions $G_{K,S} \ra V$. 

We want to verify that $\tilde\rho_e^C(\gamma'' \gamma') = \tilde\rho_e^C(\gamma'')\tilde\rho_e(\gamma')$ for all $\gamma'', \gamma' \in G_{\Q,S}$. A brief computation reduces this verification to the case where $\gamma' \in G_{K,S}$ and also $\gamma'' = \gamma c$ for some unique $\gamma \in G_{K,S}$. In this case, rewrite $\gamma'' \gamma' = (\gamma c) \gamma'$ as $\gamma (c \gamma' c) c$, observing that the desired equality holds if and only if 
\[
\rho_e(c \gamma' c) = \sm{}{1}{1}{} \rho_e(\gamma') \sm{}{1}{1}{}.
\]
This condition holds if and only if $a^c = d$ and $b^c = c$, proving the claim. 

\smallskip
\noindent
\textbf{Case of general $C$.} The set of all possible elements $\GL_2(\til\Lambda[V])$ satisfying the conditions demanded of $C$ are in bijection with 
\[
\cC = \left\{
\ttmat{v_{11}}{v_{12}}{v_{12}}{v_{22}} \in M_{2 \times 2}(V) \mid v_{11} + v_{22} = v_{12} + v_{21} = 0\right\}
\]
via $C \mapsto C - \sm{}{1}{1}{}$. For the moment, fix $(v_{i,j})$ so that it equals $C-\sm{}{1}{1}{}$. The function arising from conjugating $\tilde\rho_e^C$ by $1 + C' := 1+\sm{-v_{12}/2}{-v_{11}/2}{v_{11}/2}{v_{12}/2}$ satisfies 
\[
(1+C')\tilde \rho_e^C(c)(1-C') = \sm{}{1}{1}{}.
\]
Thus we may reduce to Case $C = \sm{}{1}{1}{}$. 

In order to carry out this reduction, we need a bit of additional notation. Write $\partial$ for the boundary map $\partial : C^0(O_K[1/Np], M_{2\times 2}(V)) \ra C^1(O_K[1/Np], M_{2\times 2}(V))$, and write $\partial = \sm{\partial_{11}}{\partial_{12}}{\partial_{21}}{\partial_{22}}$ for its decomposition into matrix coordinates. Then we apply the case $C = \sm{}{1}{1}{}$ and observe that $1+e$ is fixed by conjugation by $(1+C')$ to deduce that $\tilde\rho_e^C$ is a homomorphism if and only if 
\[
a^c = d, \quad b^c = c. 
\]
A complement to $\cC \subset \frp\frg\frl_2 \otimes V$ is $\sm{0}{w}{w}{0}$. Conjugating $\tilde\rho_e^C$ by $1+\sm{0}{1}{1}{0}w$ fixes $\tilde\rho_e^C(c) = C$, fixes $a$ and $d$, and sends 
\[
b \mapsto b-\partial_{12}(w), \quad c \mapsto c - \partial_{21}(w), 
\]
which maintains the equality $b^c = c$. 

Altogether, we have calculated that lifts of $\rho$ to $\til\Lambda[V]$ are in bijection with the $\til\Lambda$-module
\[
(a,b,v_{11},v_{12}) \in Z^1(O_K[1/Np], V) \oplus Z^1(O_K[1/Np], \til \Lambda^-_\dia \otimes V) \oplus V^{\oplus 2}
\]
via $(a,b,v_{11},v_{12}) \mapsto \tilde\rho_e^C$, where $e$ and $C$ are defined as 
\[
e = \ttmat{a}{b}{b^c}{a^c}, \qquad C = \ttmat{}{1}{1}{} + \ttmat{v_{11}}{v_{12}}{-v_{12}}{-v_{11}}.
\]
The action of conjugation by $1 + M_{2 \times 2}(V) \subset \GL_2(\til\Lambda[V])$ on the lifts of $\rho$ to $\til\Lambda[V]$, under this bijection, amounts to translation by the sub-$\til\Lambda$-module
\[
B^1(O_K[1/Np], V) \oplus B^1(O_K[1/Np], \til\Lambda^-_\dia \otimes V) \oplus V^{\oplus 2}.
\]
The quotient is naturally isomorphic to the claimed image of $\sigma^K$. 
\end{proof}

Using the foregoing expression of Shapiro's lemma, we calculate $\Hom_\CM(R^\ast, \til\Lambda[V])$. Write $H_p$ for the $p$-primary summand of the ideal class group of $K$. 
\begin{prop}
\label{prop: tangent spaces}
For any finitely generated $\til\Lambda$-module $V$, there are isomorphisms 
\[
\Hom_\CM(R^\ord,  \til\Lambda[V]) \lrisom \Hom_{\Z_p}(H_p, V) \oplus H^1_{(N\p^*)}(O_K[1/Np], \til\Lambda^-_\dia \otimes V),
\]
\[
\Hom_\CM(R^\spl,  \til\Lambda[V]) \lrisom \Hom_{\Z_p}(H_p, V) \oplus H^1_{(Np)}(O_K[1/Np], \til\Lambda^-_\dia \otimes V),
\]
\[
\Hom_\CM(\til\Lambda,  \til\Lambda[V]) \lrisom \Hom_{\Z_p}(H_p, V).
\]
\end{prop}

\begin{proof}
We apply throughout the interpretation of $\Hom_\CM(R^\ast,  \til\Lambda[V])$ in Lemma~\ref{lem: concrete def module}. Thus our goal is to calculate the image of the injections of Lemma \ref{lem: injection of con module}, which are determined by Corollary \ref{cor: adjoint Selmer}. So it remains is to interpret the conclusion of Corollary \ref{cor: adjoint Selmer} in terms of Proposition \ref{prop: explicit Shapiro}. 

We use the notation of Galois cohomology instead of $\Ext^1$. For convenience, when $v$ is a rational prime dividing $Np$ and $*=ij$ for $i, j\in\{1,2\}$, we use the natural extensions of $\sigma_*^v$ and $\tau_*^v$ to the codomain of $\sigma^K$: these are $\sigma_*^\frv$, $\tau_*^\frv$, where $\frv$ is the prime over $v$ distinguished by the embeddings of \S\ref{sssec: the question}. 
\begin{align*}
\sigma_{1,2}^\p : H^1(O_K[1/Np], \til\Lambda_\dia^- \otimes_{\til\Lambda} V) \lra H^1(K_\p, \til\Lambda_\dia^- \otimes_{\til\Lambda} V) \\
\sigma_{2,1}^\p : H^1(O_K[1/Np], \til\Lambda_\#^- \otimes_{\til\Lambda} V) \lra H^1(K_\p, \til\Lambda_\#^- \otimes_{\til\Lambda} V) \\
\tau_{i,i}^\frv : H^1(O_K[1/Np], V) \lra H^1(K^\mathrm{unr}_\frv, V), \quad i = 1,2.
\end{align*}
We also use the isomorphism of Shapiro's lemma as given by the top row of $\sigma^K$:
\begin{equation}
\label{eq: S lem}
\Ext^1_{\til\Lambda[G_{\Q,S}]}(\rho, \rho \otimes V)
\lrisom
H^1(O_K[1/Np], V) \oplus H^1(O_K[1/Np], \til\Lambda_\dia^- \otimes_{\til\Lambda} V).
\end{equation}

The map $\bigoplus_{v \mid Np} \left(\tau_{1,1}^\frv \oplus \tau_{2,2}^\frv\right)$ factors through the summand $H^1(O_K[1/Np], V)$ of the codomain of \eqref{eq: S lem},  yielding 
\begin{align*}
H^1(O_K[1/Np], V) &\lra \bigoplus_{v \mid Np} \left(H^1(K_\frv^\mathrm{unr}, V) \oplus H^1(K_\frv^\mathrm{unr},V)\right)
\\
a &\mapsto \big((a\vert_{I_\frv}, a^c\vert_{I_\frv}) \mid \text{ primes } v \mid Np\big).
\end{align*}
Using the equivalence $a^c\vert_{I_\p}=0 \Longleftrightarrow a\vert_{I_{\p^*}} = 0$, we find that these are $V$-valued homomorphisms factoring through $H_p$. This establishes the final claimed isomorphism, as deformations induced from $K$ are split upon restriction to $K$. 

For the first claimed isomorphism, we calculate the ordinary case. Similarly to the previous paragraph, $\sigma_{2,1}^\p$ factors through the summand $H^1(O_K[1/Np], \til\Lambda_\dia^- \otimes_{\til\Lambda} V)$ of the codomain of \eqref{eq: S lem}, yielding
\begin{align}
\label{eq: H ord non-induced tangent cond}
\begin{split}
H^1(O_K[1/Np], \til\Lambda_\dia^- \otimes_{\til\Lambda} V) &\lra 
H^1(K_\p, \til\Lambda_\#^- \otimes_{\til\Lambda} V) \\
b &\mapsto b^c\vert_{G_\p}
\end{split}
\end{align}
Let $\mathfrak{l}$ be a prime of $K$ over $N$. It follows from the cohomology calculation in the proof of Lemma \ref{lem: local reduction} that $H^i(K_{\mathfrak{l}}, \bar\psi^-) = 0$ for all $i \geq 0$. Therefore, the local factors over $N$ of the long exact sequence in cohomology \eqref{eq:LES cone} arising from the cone construction (with $S'$ the set of primes of $K$ dividing $N \p^\ast$ and $T = \til\Lambda_\#^-$) are trivial. Likewise, for the local factors over $p$, we have $H^0(K_\p,\bar\psi^-) = H^0(K_{\p^\ast},\bar\psi^-) = 0$, so there are no local terms in degree zero in this long exact sequence. Also, $b^c\vert_{G_\p} = 0$ if and only if $b\vert_{G_{\p^\ast}} = 0$. Therefore, the kernel of \eqref{eq: H ord non-induced tangent cond} is canonically isomorphic to $H^1_{(N\p^\ast)}(O_K[1/Np], \til\Lambda^-_\dia \otimes V)$.

Recalling the decomposition \eqref{eq: S lem}, we conclude that $\sigma_{2,1}^\p \oplus \bigoplus_{v \mid Np} \big(\tau_{1,1}^\frv \oplus \tau_{2,2}^\frv\big)$ has kernel naturally isomorphic to the direct sum of the two kernels above. This gives the first isomorphism. 

The argument for the second is essentially identical. We replace $\sigma_{2,1}^\p$ with $\sigma_{1,2}^\p \oplus \sigma_{2,1}^\p$, which also factors through the summand $H^1(O_K[1/Np], \til\Lambda_\dia^- \otimes_{\til\Lambda} V)$ of the codomain of \eqref{eq: S lem}. This factorization is 
\begin{align*}
H^1(O_K[1/Np], \til\Lambda_\dia^- \otimes_{\til\Lambda} V) &\lra 
H^1(K_\p, \til\Lambda_\dia^- \otimes_{\til\Lambda} V) \oplus
H^1(K_\p, \til\Lambda_\#^- \otimes_{\til\Lambda} V) \\
b &\mapsto (b\vert_{G_{\p}}, b^c\vert_{G_{\p}}).
\end{align*}
Therefore the kernel of $\sigma_{1,2}^\p \oplus \sigma_{2,1}^\p \oplus \bigoplus_{v \mid Np} \big(\tau_{1,1}^\frv \oplus \tau_{2,2}^\frv\big)$ is naturally isomorphic to the direct sum of the two kernels from the factorization. Then, \eqref{eq:LES cone} computes this group by the same argument as before, where $S'$ is now the set of primes of $K$ dividing $Np$. 
\end{proof}

Now we can interpret maps out of the conormal modules of the CM locus in the ambient ordinary or split deformation space. 
\begin{cor}
\label{cor: normal maps}
For any finitely generated $\til\Lambda$-module $V$, we have canonical isomorphisms 
\[
\Hom_{\til\Lambda}(J/J^2, V) \lrisom H^1_{(N\p^*)}(O_K[1/Np], \til\Lambda^-_\dia \otimes V),
\]
\[
\Hom_{\til\Lambda}(J^s/(J^s)^2, V) \lrisom H^1_{(Np)}(O_K[1/Np], \til\Lambda^-_\dia \otimes V)
\]
that are functorial in $V$. 
\end{cor}
\begin{proof}
We claim that the injections 
\[
\Hom_\CM(\til\Lambda, \til\Lambda[V]) \rinj \Hom_\CM(R^\ast, \til\Lambda[V]), \qquad \ast \in \{\ord,\spl\}
\]
induced by the canonical surjections $R^\ord \rsurj R^\spl \rsurj \til\Lambda$ are compatible with the direct sum decompositions in the statement of Proposition \ref{prop: tangent spaces}. This follows from the fact that the image of these injections, say on an element $a \in \Hom_{\Z_p}(H_p, V)$, corresponds exactly to $\Ind_K^\Q \til\dia\cdot (1+a)$. By Lemma \ref{lem: CM char def}, induced deformations of $\bar\rho$ are exactly those that arise from homomorphisms out of $\til\Lambda$. Hence the statement follows from Proposition \ref{prop: interpret conormal modules}. 
\end{proof}

\subsection{Interpretation as class groups}

We arrive at the identification of the conormal modules. We apply the map $\til\delta$ of \eqref{eq: Z to Z-minus}, usually restricting it from its domain $\til\Lambda_{W'}^- \otimes_{W'} W$ to its subring $\til\Lambda_{W'}^- \otimes 1 \cong \til\Lambda_{W'}^-$. 

\begin{thm}
\label{thm: conormal spaces}
We have isomorphisms
\begin{enumerate}[label=(\roman*)]
\item $\displaystyle \cY^-_\infty(\psi^-) \otimes_{\til\Lambda_{W'}^-, \til\delta} \til\Lambda \lrisom J/J^2$ and
\item $\displaystyle \cX^-_\infty(\psi^-)\otimes_{\til\Lambda_{W'}^-, \til\delta} \til\Lambda \lrisom J^s/(J^s)^2$,
\end{enumerate}
compatibly with the natural surjections $J/J^2 \rsurj J^s/(J^s)^2$ and $\cY^-_\infty(\psi^-) \rsurj \cX^-_\infty(\psi^-)$. 
\end{thm}

\begin{rem}
Case (i) was originally proved by Hida; indeed, it follows immediately from the computation of $\Hom_\CM(R^\ord,  \til\Lambda[V])$ in \cite[Prop.~3.89, Thm.~5.33]{hida-HMF} combined with the argument establishing Corollary \ref{cor: adjoint Selmer}.
\end{rem}

\begin{proof}
Let $V$ be a finitely generated $\til\Lambda$-module. Since $\til\Lambda$ is a complete intersection (see Proposition \ref{prop: CM Galois}), we may apply global Tate duality in the form of Proposition \ref{prop: duality}.  Since  $H^i_{(\p)}(O_K[1/Np],\til\Lambda^-_\#(1)) = 0$ for $i \neq 2$ according to Proposition \ref{prop: Kummer}, the application to $T = \til\Lambda^-_\#(1)$ of the global Tate duality spectral sequence of Proposition \ref{prop: duality} degenerates. This yields 
\[
\Hom_{\til\Lambda}(H^2_{(\p)}(O_K[1/Np], \til\Lambda^-_\#(1)), V) \lrisom H^1_{(N\p^\ast)}(O_K[1/Np], \til\Lambda^-_\dia \otimes V).
\]

Because $\til\Lambda \otimes_{\til\delta,\til\Lambda^-_{W'}} (\til\Lambda_{W'}^-)_\#(1) \cong W \otimes_{W'} (\til\Lambda_{W'}^-)_\#(1)$, Proposition \ref{prop: Kummer} allows us to  replace $H^2_{(\p)}(O_K[1/Np], \til\Lambda^-_\#(1))$ by $\cY^-_\infty(\psi^-) \otimes_{\til\Lambda^-_{W'},\til\delta} \til\Lambda$. Corollary \ref{cor: normal maps} canonically identifies $\Hom_{\til\Lambda}(J/J^2,-)$ with
\[
H^1_{(N\p^\ast)}(O_K[1/Np], \til\Lambda^-_\dia \otimes -)
\]
as functors on finitely generated $\til\Lambda$-modules. Because both $J/J^2$ and $\cY^-_\infty(\psi^-) \otimes_{\til\Lambda^-_{W'}} \til\Lambda$ are finitely generated as $\til\Lambda$-modules, Yoneda's lemma implies the result (i). 

The proof of (ii) is essentially the same. Because $H^i(O_K[1/Np], \til\Lambda^-_\#(1)) = 0$ for $i > 2$, the duality spectral sequence of Proposition \ref{prop: duality} yields 
\[
\Hom_{\til\Lambda}(H^2(O_K[1/Np], \til\Lambda^-_\#(1)), V) \lrisom H^1_{(Np)}(O_K[1/Np], \til\Lambda^-_\dia \otimes V).
\]
By Proposition \ref{prop: Kummer}, we can replace $H^2(O_K[1/Np], \til\Lambda^-_\#(1))$ by $\cX^-_\infty(\psi^-) \otimes_{\til\Lambda_{W'}^-, \til\delta} \til\Lambda$. The rest of the proof proceeds as in the proof of (i). 
\end{proof}

\subsection{Proofs of main theorems}
\label{subsec: proof of main theorems}

In this section, we deduce the main result (Theorem \ref{thm:main}), and also Theorems \ref{thm: equiv main} and \ref{thm: equiv main2}, from the following main technical result. We resume writing $\bT^\CM$ in place of $\til\Lambda$. 
\begin{thm}
\label{thm: main R=T}
Assume conditions (0)--(4) of \S\ref{subsec: setup}. Then the surjection $R^\spl \rsurj \bT^\CM$ is an isomorphism if and only if $X(\psi^-) = 0$. 
\end{thm}

\begin{proof}
We know that $X(\psi^-) = 0$ if and only if $\cX^-_\infty(\psi^-) = 0$ by Proposition \ref{prop: X control}(i). Thus Theorem \ref{thm: conormal spaces} implies the theorem, as long as we know that $J^s = 0 \iff J^s/(J^s)^2 = 0$. This follows from Nakayama's lemma, as $J^s$ is contained in the maximal ideal of the complete Noetherian local ring $R^\spl$. 
\end{proof}

The main theorem now follows. 
\begin{proof}[{Proof of Theorem \ref{thm:main}}]
The conclusion of Theorem \ref{thm:main} is equivalent to the set 
\[
\Spec R^\spl(\oQ_p) \smallsetminus \Spec \bT^\CM(\oQ_p).
\]
being empty. When $X(\psi^-) = 0$, this immediately follows from Theorem \ref{thm: main R=T}. 
\end{proof}

Now we deduce Theorems \ref{thm: equiv main} and \ref{thm: equiv main2} from Theorem \ref{thm: main R=T} and the background in \S\ref{sec: interpolation}. 

\begin{proof}[{Proof of Theorems \ref{thm: equiv main} and \ref{thm: equiv main2}}]
It follows from Proposition \ref{prop: Rsplit valid} that the $p$-locally split condition is well-defined on the Galois representations associated to generalized eigenforms $g'$, $\bar g'$, even though their coefficient rings are not domains. Thus condition (c) of the theorems is equivalent to the map $\bT \ra A_{\bar g'}$ (resp.\ $\bT \ra A_{g'}$) factoring through $\bT \rsurj R^\spl$. 

Similarly, as we have noted that the CM condition is well-defined on generalized eigenforms in \S\ref{subsec: weights and GE}, the ``not CM'' condition (b) of both theorems is equivalent to the map $\bT \ra A_{\bar g'}$ (resp.\ $\bT \ra A_{g'}$) not factoring through $\bT \rsurj \bT^\CM$. 

\smallskip

\noindent
\textbf{Case of Theorem \ref{thm: equiv main2}.} 
Assume that $\cX^-_\infty(\psi^-)$ is infinite, which is equivalent to $\cX := \cX^-_\infty(\psi^-) \otimes_{\til\Lambda_{W'}^-, \til\delta} \til\Lambda$ being infinite. Then as a $\Lambda_\Q$-module (where this module structure arises from  $\beta: \Lambda_\Q \ra \til \Lambda$ discussed in Lemma \ref{lem: diaQ and diaK}), $\cX$ has support on some height 1 prime $P \subset \Lambda_\Q$. By Proposition \ref{prop: X control}(ii), $P$ has characteristic zero; hence $P = P_{k, \chi'}$ for some $p$-adic weight $(k,\chi')$. 

Let $E = E_{k,\chi'}$ denote the residue field of $P_{k,\chi'}$, which is a finite extension of $\Q_p$. We now consider the surjection with square-zero kernel 
\[
(R^\spl/(J^s)^2) \otimes_{\Lambda_\Q} E \rsurj \bT^\CM \otimes_{\Lambda_\Q} E. 
\]
By Theorem \ref{thm: conormal spaces}, its kernel surjects onto $\cX \otimes_{\Lambda_\Q} E$, which is non-zero. Because $\bT^\CM \otimes_{\Lambda_\Q} E$ is a finite product of finite extension fields over $E$, it has some factor $E_x = (\bT^\CM \otimes_{\Lambda_\Q} E)/\m_x$ with the following property: letting $\m'_x$ be the kernel of the surjection from $(R^\spl/(J^s)^2) \otimes_{\Lambda_\Q} E$ to $E_x$, $\cX \otimes_{\Lambda_\Q} E$ does not vanish under its natural map to $\m'_x/{\m'_x}^2$.

 Choose some $E_x$-1-dimensional quotient $\cX'$ of $\cX \otimes_{\Lambda_\Q} E_x$ and let $A_x := E_x[\cX'] \simeq E_x[\epsilon]/(\epsilon^2)$ be the corresponding square-zero extension of $E_x$. Then we may factor $(R^\spl/(J^s)^2) \otimes_{\Lambda_\Q} E \rsurj E_x$ through $A_x \rsurj E_x$. 

We now recall the discussion of generalized eigenforms and their attached Galois representations from \S\ref{subsec: weights and GE}. The composite $\bT \rsurj R^\spl \rsurj A_x$ corresponds (via the duality of Lemma \ref{lem: classical duality}) to a $p$-adic $p$-ordinary generalized eigenform $g'$ of $p$-adic weight $(k,\chi')$ with eigensystem corresponding to the composite $\bT \rsurj A_x \rsurj E_x$. The corresponding Galois representation $\rho_{g'} : G_{\Q,S} \ra \GL_2(A_x)$ arising as $\rho_{g'} := \rho_\bT \otimes_{\bT} A_x$ has the following properties:
\begin{enumerate}[label=(\alph*)]
\item The eigensystem induced by $\bT \ra E_x$ has CM and is congruent to $\bar f$, because it factors through $\bT \rsurj \bT^\CM$.
\item $g'$ does not have CM, because $\bT \ra A_x$ cannot factor through $\bT^\CM$: indeed, by Theorem \ref{thm: conormal spaces}, if it did factor, then $\cX$ must vanish when projected to $A_x$. But $\bT \ra A_x$ has been constructed so that it does not have this property. 
\item $\rho_{g'}$ is $p$-locally split, because $\bT \cong R^\ord \ra A_x$ factors through $R^\ord \rsurj R^\spl$. 
\end{enumerate}
These are the properties (a), (b), and (c) of Theorem \ref{thm: equiv main2}. We have also arranged for $A_x \simeq E_x[\epsilon]/(\epsilon^2)$, as claimed. 

For the converse, note that if $g'$ inducing $\bT \ra A_{g'}$ arises from the action on a generalized eigenform with properties (a), (b), and (c), then 
\begin{enumerate}[label=(\alph*)]
\item implies that the composite map $\bT \ra A_{g'} \ra E_{g'} \cong A_{g'}/\m_{g'}$ to the residue field of $A_{g'}$ amounts to an eigensystem that has CM,
\item implies that this map does not factor through $\bT \rsurj \bT^\CM$, and
\item implies that this map does factor through $\bT \rsurj R^\spl$. 
\end{enumerate}
Consider the image $A \subset A_{g'}$ of $R^\spl$, which is a local ring that is not a field (by (a) and (b)). Writing $\m_A \subset A$ for its maximal ideal, we consider the induced map $R^\spl \rsurj A/\m_A^2$. Its restriction to $J^s$ factors through $J^s/(J^s)^2$, and (b) implies that its image is non-zero. Since this image is a $\Z_p$-submodule of a $\Q_p$-vector space, we deduce from Theorem \ref{thm: conormal spaces} that $\cX^-_\infty(\psi^-)$ is infinite.

\smallskip

\noindent
\textbf{Case of Theorem \ref{thm: equiv main}.} 
The proof of this case is essentially the same. The only difference is that $\F$ plays the role of both $E$ and $E_x$, while $\bT^\CM \otimes_{\Lambda_\Q} \F$ is an Artinian local $\F$-algebra. Then the surjection of Artinian local algebras $R^\spl \otimes_{\Lambda_\Q} \F \rsurj \bT^\CM \otimes_{\Lambda_\Q} \F$ induces a surjection of the square-zero extension quotients. By Theorem \ref{thm: conormal spaces} and by letting $V = \F$ in Proposition \ref{prop: tangent spaces}, this surjection is 
\[
\F[X(\psi^-) \oplus (H_p \otimes_{\Z_p} \F)] \rsurj \F[H_p \otimes_{\Z_p} \F]
\]
(in the notation of Proposition \ref{prop: tangent spaces}). It is straightforward to deduce the result from here, using arguments analogous to the case of Theorem \ref{thm: equiv main2}. 
\end{proof}

\section{Commutative algebra}
\label{sec: resultant}

In this section, we set up a proposition from commutative algebra and deduce Theorem \ref{thm:main2}. 

\subsection{A proposition using the resultant}

The following lemma summarizes the theory of the resultant that we will require. 
\begin{lem}
\label{lem: resultant}
Let $R$ be a domain, and let $F(y), G(y) \in R[y]$ be polynomials. There is a resultant $\pi \in R$ of $F(y)$ and $G(y)$ with the following properties.
\begin{enumerate}
\item $\pi = 0$ if and only if $F(y)$ and $G(y)$ have a non-constant common factor. 
\item $\pi \in R \subset R[y]$ is an $R[y]$-linear combination of $F(y)$ and $G(y)$, i.e.
\[
\pi \cdot \frac{R[y]}{(F(y), G(y))} = 0. 
\]
\end{enumerate}
\end{lem}

In the following proposition, we refer to the \emph{generic rank} of a module $M$ over a domain $R$. This is defined to be the $\Frac(R)$-dimension of $M \otimes_R \Frac(R)$. 

\begin{prop}
\label{prop: GR 1}
Let $R$ be a complete Noetherian regular local ring. Let $S$ be an augmented reduced local $R$-algebra that is finitely generated and torsion-free as an $R$-module. Let $T$ be an augmented local $R$-algebra quotient of $S$, and denote by $K$ the kernel of $T \rsurj R$. 

Assume that $K/K^2$ is supported in codimension at least $2$ as an $R$-module. Then $T$ has generic rank equal to 1. 
\end{prop}

\begin{proof}
For this proof, given an augmented $R$-algebra $R \rinj A \rsurj R$, we denote by $A^c$ the $R$-module complement to the summand $R \subset A$ determined by the augmented $R$-algebra structure. That is, we have a canonical isomorphism of $R$-modules $A \cong R \oplus A^c$. We note that $A$ has generic rank 1 if and only if $A^c$ is $R$-torsion; we will implicitly use this equivalence in this proof. 

Denote by $J$ the kernel of $S \rsurj R$, and choose a minimal set $\cG$ of generators for the ideal $J$, which is also a minimal set of generators for $S$ as an $R$-algebra. Choose an element $y \in \cG$ and write $S'_y \subset S$, $T'_y \subset T$ for the sub-$R$-algebras generated by $y$. We observe that $S'_y \ra T'_y$ is a morphism of augmented $R$-algebras. 

We claim that it suffices to prove that $T'_y$ has generic rank 1 for all $y \in \cG$. Indeed, consider these product algebras with an augmented $R^\cG$-algebra structure
\[
R^\cG \ra \prod_{y \in \cG} T'_y \ra R^\cG \rsurj R,
\]
where the additional rightmost arrow is the diagonal projection homomorphism. We also have a natural map
\[
\prod_{y \in \cG} T'_y \rsurj T
\]
lying over the diagonal projection, inducing a surjection of $R$-modules
\[
\bigoplus_{y \in \cG} (T'_y)^c \rsurj T^c. 
\] 
Thus we observe that $T$ has generic rank 1 if and only if $T'_y$ has generic rank 1 for all $y \in \cG$. 

Having reduced to the case that $\#\cG = 1$, we render $S$ and $T$ as 
\begin{gather*}
S = \frac{R[y]}
{(y \cdot F_1(y), \dotsc, y \cdot F_n(y))}, \\
T = \frac{R[y]}
{(y \cdot F_1(y), \dotsc, y \cdot F_n(y), y \cdot G_1(y), \dotsc, y \cdot G_r(y))}. 
\end{gather*}
Now we have $J = (y)$. Note that $J/J^2$ is a torsion $R$-module generated by $y \pmod{J^2}$. Indeed, if this were not the case, let $m \geq 2$ be minimal such that $J^m/J^{m-1}$ is $R$-torsion. If $P(y) \in R[y]$ is a monic polynomial of minimal degree satisfied by $y$, then $y^m \mid P(y)$ because $J^i/J^{i+1}$ is free of rank one for $i < m$. Thus $y \cdot P(y)$ is a nilpotent element of $S$, contradicting our assumption that $S$ is reduced. 

Observe that $J/J^2$ is a cyclic $R$-module, generated by $y$, and isomorphic as an $R$-module to 
\[
J/J^2 \lrisom \frac{R}{(F_1(0), \dotsc, F_n(0))}.
\]
Likewise, its quotient $K/K^2$ is generated by the image $y'$ of $y$ in $T$ and is isomorphic as an $R$-module to 
\[
K/K^2 \lrisom \frac{R}{(F_1(0), \dotsc, F_n(0), G_1(0), \dotsc, G_r(0))}. 
\]
We claim that there exist a pair of polynomials $F(y), G(y)$ in the set 
\[
\{F_1(y), \dotsc, F_n(y), G_1(y), \dotsc, G_r(y)\}
\]
such that $R/(F(0), G(0))$ is supported in codimension 2. This follows directly from the assumption that $K/K^2$ is supported in codimension 2. 

We note that 
\[
\frac{R[y]}
{(y \cdot F(y))}, \qquad 
\frac{R[y]}
{(y \cdot F(y), y \cdot G(y))}
\]
are naturally augmented local $R$-algebras with augmentation ideal generated by $y$, and with a surjective augmented $R$-algebra map to $S$ and $T$, respectively. Therefore, it suffices to replace $S$ and $T$ with these algebras. Indeed, having done this, we observe that $J/J^2$ is torsion and $K/K^2$ is supported in codimension 2. We define
\[
T' := \frac{R[y]}
{(F(y), y \cdot G(y))},
\]
the quotient of $T$ by $(F(y))$, but note that $T'$ is not an augmented $R$-algebra. Because the kernel of $T \rsurj T'$ is a cyclic $R$-module (generated by $F(y)$), and we know that $T$ has generic rank at least 1, it will suffice to show that $T'$ is a torsion $R$-module. 

Let $\pi \in R$ be the resultant of the polynomials $F(y), y \cdot G(y) \in R[y]$. By Lemma \ref{lem: resultant}(2), we have
\[
\pi \cdot T' = 0.
\]
Thus we want to show that $\pi \neq 0$. By Lemma \ref{lem: resultant}(1), it suffices to prove that $F(y)$ and $y \cdot G(y)$ do not have any non-constant common factors. Assume, for the sake of contradiction, that there exists such a divisor $H(y) \in R[y]$. We may assume that $H(y)$ is irreducible and monic, since both $F(y)$ and $y \cdot G(y)$ are monic. We see that $H(y) \neq y$, because $F(0) \neq 0$. Next, note that $H(0)$ is not a unit in $R$, because if $H(y) \mid F(y)$ with quotient $Q(y)$, then $S \cong R[y]/(y \cdot H(y) \cdot Q(y))$ would not be a local ring (consider $S/\m_R S$). Then $H(0) \mid F(0)$ and $H(0) \mid G(0)$. This contradicts the fact that $R/(F(0), G(0))$ is finite, as it surjects onto the non-finite $R/(H(0))$. 
\end{proof}

\subsection{Proof of Theorem \ref{thm:main2}}

We will apply Proposition \ref{prop: GR 1} to $R^\spl$ in order to prove Theorem \ref{thm:main2}.

\begin{lem}
\label{lem: GR 1 assumptions}
Assume (0)--(4). Also assume that $p\nmid h_K$ and that $X^-_\infty(\psi^-)$ has finite cardinality. Then $R^\spl$ has generic rank 1 as a $\Lambda_\Q$-module. 
\end{lem}

\begin{proof}
We see that the conclusion of the lemma will follow from verifying that the assumptions of Proposition \ref{prop: GR 1} about $(R,S,T, K)$ are satisfied by 
\[
(R,S,T,K) = (\Lambda_\Q, R^\ord \cong \bT, R^\spl, J^s),
\]
where the augmented $\Lambda_\Q$-algebra structure of $R^\ord \cong \bT$ is understood to be defined by the ideal $J \cong I^\CM$.

Recall from Lemma \ref{lem: diaQ and diaK} the sequence of homomorphisms
\[
\Lambda_\Q \ra \bT \rsurj R^\spl \rsurj  \bT^\CM \risom \til\Lambda \rsurj \Lambda. 
\]
There, we see that these induce isomorphisms $\Lambda_\Q \risom \til\Lambda \risom \Lambda$ if and only if $p \nmid h_K$. Thus we apply the assumption $p \nmid h_K$ and identify $\Lambda_\Q \risom \bT^\CM \cong \til\Lambda$, treating $\bT \rsurj R^\spl$ as a morphism of augmented $\Lambda_\Q$-algebras. 

All of the assumptions of Proposition \ref{prop: GR 1}, except the one that $J^s/(J^s)^2$ is supported in codimension at least 2, are satisfied by the properties of $\bT$ checked in \S2, especially Lemma \ref{lem: reduced}. We will show that the remaining property follows from the assumption that $X^-_\infty(\psi^-)$ is finite in cardinality.

For $R = \Lambda_\Q$, an $R$-module is supported in codimension 2 if and only if it has finite cardinality. By Theorem \ref{thm: conormal spaces}, there is an isomorphism $\cX^-_\infty(\psi^-) \otimes_{\til\Lambda^-_{W'}} \til\Lambda\cong J^s/(J^s)^2$. When $p \nmid h_K$, $\cX^-_\infty(\psi^-) = X^-_\infty(\psi^-)$. Then the tensor product operation $\otimes_{\til\Lambda^-_{W'}} \til\Lambda$ preserves the finite cardinality property of these modules. 
\end{proof}

\begin{proof}[{Proof of Theorem \ref{thm:main2}}]
By Lemma \ref{lem: GR 1 assumptions}, we know that the assumptions of Theorem \ref{thm:main2} imply that $R^\spl$ has generic rank 1 as a $\Lambda_\Q$-module. 

Because the locus $\Spec(\bT^\CM) \subset \Spec (\bT)$ parameterizes exactly the CM $p$-adic eigenforms congruent to $\bar f$, it follows from the constructions of \S\ref{subsec: CM and nCM} that the map $x_g : \bT \ra \oQ_p$ of Lemma \ref{lem: classical duality} corresponding to a $p$-adic eigenform $g$ (congruent to $\bar f$) factors through $\bT \rsurj \bT^\mathrm{nCM}$ if $g$ does not have CM. We also know that $\rho_g$ is $p$-locally split if and only if $x_g$ factors through $\bT \rsurj R^\spl$. Thus it will suffice to show that 
\[
R^\mathrm{sn} := \bT^\mathrm{nCM} \otimes_\bT R^\spl
\]
is torsion as a $\Lambda_\Q$-module. 

Since we have already deduced that $R^\spl$ has generic rank 1, it suffices to show that the kernel of 
\[
R^\spl \rsurj R^\mathrm{sn}
\]
has generic rank 1. In view of Theorem \ref{thm: CM and non-CM components}, we want to show that the kernel $I_\mathrm{nCM}\subset\bT$ of $\bT \rsurj \bT^\mathrm{nCM}$, injects into $R^\spl$ under $\bT \rsurj R^\spl$. But this follows from the same theorem, as we see there that $I_\mathrm{nCM}$ injects under the composite quotient map $\bT \rsurj R^\spl \rsurj \bT^\CM \cong \Lambda_\Q$, with torsion cokernel. 
\end{proof}

\begin{rem}
\label{rem: thm main2 and GV}
The main result of Ghate--Vatsal \cite{GV2004} establishes the conclusion of Theorem \ref{thm:main2} upon assumptions (1')--(3') of \S\ref{subsec: setup}. The additional assumptions we rely on to prove Theorem \ref{thm:main2} are (0), (4), and the finiteness of $X^-_\infty(\psi^-)$. There, the authors use the fact that the ideal of $(\ast) \subset \bT$ generated by the image of $G_p$ under the ``$\ast$'' of \eqref{eq: ord form of rho_T} cuts out the quotient $\bT \rsurj R^\spl$. Our method hinges on the study of maximal square-zero augmented $\bT^\CM$-algebra quotients of $\bT$ (resp.\ $R^\spl$) over $\Lambda_\Q$. We found in Theorem \ref{thm: conormal spaces} that this maximal quotient is $\bT \rsurj \til\Lambda[\cY^-_\infty(\psi^-)]$ (resp.\ $R^\spl \rsurj \til\Lambda[\cX^-_\infty(\psi^-)]$), and that the image of $G_\p$ cuts out the quotient $\cY^-_\infty(\psi^-) \rsurj \cX^-_\infty(\psi^-)$. So our method relies on detecting ``$\ast$'' in the conormal module $I_\CM/I_\CM^2 \cong \cY^-_\infty(\psi^-)$. 
\end{rem}


\markleft{HARUZO HIDA}

\newpage

\markright{APPENDIX: LOCAL INDECOMPOSABILITY}

\setcounter{subsection}{0}

%




\newcommand\h{{\operatorname{h}}}

\newcommand\GG{{\mathrm {G}}}
\newcommand\HH{{\mathrm {H}}}
\newcommand\TT{{\mathrm {T}}}

\newcommand\gA{{\mathfrak A}}
\newcommand\gV{{\mathfrak V}}
\newcommand\gd{{\mathfrak d}}  
\newcommand\gN{{\mathfrak N}}
\newcommand\gn{{\mathfrak n}}
\newcommand\gq{{\mathfrak q}}
\newcommand\gm{{\mathfrak m}}
\newcommand\gmsmall{{\mathfrak m}}
\newcommand\gmsmallprime{{\mathfrak m}}
\newcommand\f{{\mathbf f}}
\newcommand\e{{\mathbf e}}
\newcommand\PP{{\tiny P}}
\newcommand\gP{{\mathfrak P}}
\newcommand\gQ{{\mathfrak Q}}
\newcommand\gp{{\mathfrak p}}
\newcommand\gl{{\mathfrak l}}
\newcommand\gpsmall{{\mathfrak p}}
\newcommand\gqsmall{{\mathfrak q}}
\newcommand\hG{{\mathfrak h}}
\newcommand\gG{{\mathfrak g}}

\newcommand\Nm{{\mathrm {N}}}

\newcommand\tensor{{\> \otimes \> }}
\newcommand\Tr{{\mathrm {Tr}}}
\newcommand\barQ{{\overline{\Q}}}
\newcommand\divides{{\> \big| \>}}
\newcommand\notdivides{{\> \not\big| \>}}
\newcommand\modulo{{\hbox{mod }}}
\newcommand\card{{\mathrm {card}}}
\newcommand\iso{{\> \stackrel{\sim}{\longrightarrow} \> }} 
\newcommand\isom{{\> \stackrel{\sim}{=} \> }}
\newcommand\Isom{{\mathrm {Isom}}}
\newcommand\image{{\mathrm {image}}}

\newcommand\frob{{{ \Big[ \frac{L/K}{\gp} \Big]}}}




\newcommand{\thmref}[1]{Theorem~\ref{#1}}
\newcommand{\secref}[1]{Section~\ref{#1}}
\newcommand{\lemref}[1]{Lemma~\ref{#1}}
\newcommand{\corref}[1]{Corollary~\ref{#1}}
\newcommand{\conjref}[1]{Conjecture~\ref{#1}}
\newcommand{\propref}[1]{Proposition~\ref{#1}}
\newcommand{\remref}[1]{Remark~\ref{#1}}
\newcommand{\explref}[1]{Example~\ref{#1}}
\newcommand{\qstref}[1]{Exercise~\ref{#1}}
\newcommand{\defnref}[1]{Definition~\ref{#1}}


\newcommand{\Ker}{\operatorname{Ker}}
\newcommand{\Coker}{\operatorname{Coker}}



\newcommand{\hra}{\hookrightarrow}
\newcommand{\twra}{\twoheadrightarrow}
\newcommand{\xra}{\xrightarrow}
\newcommand{\xla}{\xleftarrow}
\newcommand{\wot}{\widehat\otimes}
\newcommand{\wh}{\widehat}

\newcommand{\ov}{\overline}
\newcommand{\ul}{\underline}
\newcommand{\vs}{\varsigma}
\newcommand{\vep}{\varepsilon}
\newcommand{\kp}{\kappa}
\newcommand{\al}{\alpha}
\newcommand{\s}{\sigma}
\newcommand{\LLm}{{\boldsymbol\Lambda}}
\newcommand{\llm}{{\boldsymbol\lambda}}
\newcommand{\Gam}{{\boldsymbol\Gamma}}
\newcommand{\Del}{{\boldsymbol\Delta}}
\newcommand{\mmu}{{\boldsymbol\mu}}
\newcommand{\vpb}{{\boldsymbol\varphi}}
\newcommand{\deltab}{{\boldsymbol\delta}}
\newcommand{\omegab}{{\boldsymbol\omega}}
\newcommand{\phib}{{\boldsymbol\phi}}
\newcommand{\etab}{{\boldsymbol\eta}}
\newcommand{\taub}{{\boldsymbol\tau}}
\newcommand{\pib}{{\boldsymbol\pi}}
\newcommand{\Abb}{{\mathbb A}}
\newcommand{\Cbb}{{\mathbb C}}
\newcommand{\Dbb}{{\mathbb D}}
\newcommand{\Eb}{{\mathbb E}}
\newcommand{\fb}{{\mathbb F}}
\newcommand{\Hb}{{\mathbb H}}
\newcommand{\Ib}{{\mathbb I}}
\newcommand{\jb}{{\mathbb J}}
\newcommand{\kb}{{\mathbb K}}
\newcommand{\Lb}{{\mathbb L}}
\newcommand{\Nb}{{\mathbb N}}
\newcommand{\pbb}{{\mathbb P}}
\newcommand{\sbb}{{\mathbb S}}
\newcommand{\Tbb}{{\mathbb T}}
\newcommand{\Vb}{{\mathbb V}}
\newcommand{\Wbb}{{\mathbb W}}
\newcommand{\Xb}{{\mathbb X}}
\newcommand{\Zb}{{\mathbb Z}}
\newcommand{\zb}{{\mathbb Z}}
\newcommand{\Zp}{{\mathbb Z}_p}
\newcommand{\Zl}{{\mathbb Z}_\ell}
\newcommand{\Qp}{{\mathbb Q}_p}
\newcommand\A{{\mathbb A}}
\newcommand\C{{\mathbb C}}
\newcommand\E{{\mathbb E}}
\newcommand\G{{\mathbb G}}
\newcommand\N{{\mathbb N}}
\newcommand\Pbb{{\mathbb P}}
\newcommand\R{{\mathbb R}}
\newcommand\T{{\mathbb T}}
\newcommand{\oF}{\overline{\mathbb F}}

\newcommand{\Hbf}{{\mathbf H}}
\newcommand{\Ibf}{{\mathbf I}}
\newcommand{\cb}{{\mathbf c}}
\newcommand{\Cb}{{\mathbf C}}
\newcommand{\Gb}{{\mathbf G}}
\newcommand{\Wb}{{\mathbf W}}
\newcommand{\Vbb}{{\mathbf V}}
\newcommand{\ib}{{\mathbf i}}
\newcommand{\Jbf}{{\mathbf J}}
\newcommand{\hb}{{\mathbf h}}
\newcommand{\xb}{{\mathbf x}}
\newcommand{\ab}{{\mathbf a}}
\newcommand{\bb}{{\mathbf b}}
\newcommand{\eb}{{\mathbf e}}
\newcommand{\Pb}{{\mathbf P}}
\newcommand{\Mb}{{\mathbf M}}
\newcommand{\fbb}{{\mathbf f}}
\newcommand{\gbb}{{\mathbf g}}
\newcommand{\pbof}{{\mathbf p}}
\newcommand{\Sbb}{{\mathbf S}}
\newcommand{\Ebb}{{\mathbf E}}
\newcommand{\0}{{\mathbf 0}}
\newcommand{\Tb}{{\mathbf T}}
\newcommand{\Db}{{\mathbf D}}
\newcommand{\Lbd}{{\mathbf L}}
\newcommand{\vb}{{\mathbf v}}
\newcommand{\db}{{\mathbf d}}
\newcommand{\Xbb}{{\mathbf X}}
\newcommand{\sbf}{{\mathbf s}}

\newcommand{\Ac}{{\mathcal A}}
\newcommand{\Bc}{{\mathcal B}}
\newcommand{\Cc}{{\mathcal C}}
\newcommand{\dc}{{\mathcal D}}
\newcommand{\Ec}{{\mathcal E}}
\newcommand{\fc}{{\mathcal F}}
\newcommand{\gc}{{\mathcal G}}
\newcommand{\hc}{{\mathcal H}}
\newcommand{\Ic}{{\mathcal I}}
\newcommand{\Jc}{{\mathcal J}}
\newcommand{\Kc}{{\mathcal K}}
\newcommand{\lc}{{\mathcal L}}
\newcommand{\mc}{{\mathcal M}}
\newcommand{\nc}{{\mathcal N}}
\newcommand{\Oc}{{\mathcal O}}
\newcommand{\Pc}{{\mathcal P}}
\newcommand{\qc}{{\mathcal Q}}
\newcommand{\Rc}{{\mathcal R}}
\newcommand{\Sc}{{\mathcal S}}
\newcommand{\Tc}{{\mathcal T}}
\newcommand{\Uc}{{\mathcal U}}
\newcommand{\Vc}{{\mathcal V}}
\newcommand{\Wc}{{\mathcal W}}
\newcommand{\Xc}{{\mathcal X}}
\newcommand{\Yc}{{\mathcal Y}}
\newcommand{\Zc}{{\mathcal Z}}
\newcommand{\Fc}{{\mathcal F}}
\newcommand\sA{{{\mathcal A}}}
\newcommand\sF{{{\mathcal F}}}
\newcommand\sD{{{\mathcal D}}}
\newcommand\sE{{{\mathcal E}}}
\newcommand\sO{{{\mathcal O}}}
\newcommand\sK{{{\mathcal K}}}
\newcommand\sL{{{\mathcal L}}}
\newcommand\sM{{{\mathcal M}}}
\newcommand\sN{{{\mathcal N}}}
\newcommand\sQ{{{\mathcal Q}}}
\newcommand\sR{{{\mathcal R}}}
\newcommand\sS{{{\mathcal S}}}
\newcommand\sT{{{\mathcal T}}}
\newcommand\sV{{{\mathcal V}}}
\newcommand\sZ{{{\mathcal Z}}}

\newcommand{\EG}{{\mathfrak E}}
\newcommand{\eG}{{\mathfrak e}}
\newcommand{\WG}{{\mathfrak W}}
\newcommand{\AG}{{\mathfrak A}}
\newcommand{\aG}{{\mathfrak a}}
\newcommand{\fG}{{\mathfrak f}}
\newcommand{\lG}{{\mathfrak l}}
\newcommand{\IG}{{\mathfrak I}}
\newcommand{\iG}{{\mathfrak i}}
\newcommand{\qG}{{\mathfrak q}}
\newcommand{\pG}{{\mathfrak p}}
\newcommand{\PG}{{\mathfrak P}}
\newcommand{\oG}{{\mathfrak o}}
\newcommand{\OG}{{\mathfrak O}}
\newcommand{\mG}{{\mathfrak m}}
\newcommand{\zG}{{\mathfrak z}}
\newcommand{\MG}{{\mathfrak M}}
\newcommand{\nG}{{\mathfrak n}}
\newcommand{\QG}{{\mathfrak Q}}
\newcommand{\CG}{{\mathfrak C}}
\newcommand{\LG}{{\mathfrak L}}
\newcommand{\dG}{{\mathfrak d}}
\newcommand{\ogg}{{\mathfrak o}}
\newcommand{\SG}{{\mathfrak S}}
\newcommand{\sG}{{\mathfrak s}}
\newcommand{\rG}{{\mathfrak r}}
\newcommand{\RG}{{\mathfrak R}}
\newcommand{\ZG}{{\mathfrak Z}}
\newcommand{\HG}{{\mathfrak H}}
\newcommand{\FG}{{\mathfrak F}}
\newcommand{\UG}{{\mathfrak U}}
\newcommand{\uG}{{\mathfrak u}}
\newcommand{\yG}{{\mathfrak y}}
\newcommand{\XG}{{\mathfrak X}}
\newcommand{\xG}{{\mathfrak x}}
\newcommand{\tG}{{\mathfrak t}}
\newcommand{\KG}{{\mathfrak K}}
\newcommand{\Gm}{{\G}_m}
\newcommand{\wT}{{\widehat T}}
\newcommand{\Inv}{\rm Inv}
\newcommand{\pw}{{\parallel\!\!\wr}}
\newcommand{\orho}{{\overline\rho}}
\newcommand{\od}{{\overline\delta}}
\newcommand{\okb}{{\overline\kb}}
\newcommand{\oep}{{\overline\varepsilon}}
\newcommand{\ophi}{{\overline\varphi}}
\newcommand{\oa}{{\overline\alpha}}
\newcommand{\oP}{\overline{P}}
\newcommand{\vr}{\varrho}
\newcommand{\vp}{\varphi}
\newcommand{\lm}{\lambda}
\newcommand{\Lm}{\Lambda}
\newcommand{\vpi}{\varpi}

 \newcommand{\BG}{{\mathfrak B}}
 \newcommand{\TG}{{\mathfrak T}}
\newcommand{\DG}{{\mathfrak D}}
\newcommand{\rhob}{{\boldsymbol\rho}}
\newcommand{\epsilonb}{{\boldsymbol\epsilon}}
\newcommand{\vepb}{{\boldsymbol\vep}}
\newcommand{\psib}{{\boldsymbol\psi}}
\newcommand{\betab}{{\boldsymbol\beta}}
\newcommand{\kpb}{{\boldsymbol\kappa}}
\newcommand{\lan}{\langle}
\newcommand{\ran}{\rangle}
\newcommand{\Fb}{{\mathbf F}}
\newcommand{\cha}{\operatorname{char}}
\newcommand{\edim}{\operatorname{edim}}
\newcommand{\gr}{\operatorname{gr}}
\newcommand{\Coind}{\operatorname{C5ind}}



\section*{Appendix. \for{toc}{Local indecomposablity via a presentation of the Hecke algebra, by Haruzo Hida}\except{toc}{\ \\Local indecomposability via a presentation of the Hecke algebra\\ by Haruzo Hida}}
\renewcommand{\theequation}{A.\arabic{equation}}
\renewcommand{\thesubsection}{A.\arabic{subsection}}

\subsection{Summary}
Let  $p\ge5$  be a prime.
In this appendix,
we give a proof of Greenberg's conjecture ((CG) in the main text) of local indecomposability 
of a non-CM residually CM Galois representation based on the presentation of the universal ring
given in \cite{hida-cyclicity} (so, the proof is different from the one given in the main text).
We impose an extra assumption (H3-4) in addition to the set of the assumptions made in the main text
(we list our set of assumptions as (H0--4)  below).
We use the notation introduced in the main text.
For each Galois representation  $\rho$  of  $G_K$,
we write  $K(\rho)=\barQ^{\Ker(\rho)}$  for the splitting field of  $\rho$.
We fix an algebraic closure  $\ov\fb$  of  $\fb$  and write  $\WG$  for the Witt vector ring  $W(\ov\fb)$.

A deformation  $\rho_A:G_K\ra\GL_2(A)$  for an algebra  $A$  in  CNL$_{W}$  of a the representation  $\orho=\Ind_K^\Q\ov\psi:G_\Q\ra\GL_2(\fb)$  as in \S1.2.2
is said to be minimal if  $\rho_A(I_l)\cong\orho(I_l)$
by the reduction map for all primes  $l|N$ \cite[\S3.1, pg.~715]{DFG2004}.
By an $R=\T$ theorem (e.g., \cite[Thm.~2.3]{diamond1997}),
we have a local ring  $\T$  of the ordinary Hecke algebra  
and its Galois representation  $\rho_\T:G_\Q\ra\GL_2(\T)$  giving a universal ordinary pair with
$\T$  being naturally an algebra over the weight Iwasawa algebra  $\Lambda:=W[[(1+p\Zp)]]\cong W[[T]]$.
We assume that  $\Spec(\T)$  contains a non-CM component $\Spec(\T^\text{\rm nCM})$.
We made the following assumptions in \cite{hida-cyclicity} 
to prove a presentation of  $\T$  over  $\Lambda$: 
\begin{enumerate}
  \item[(H0)]  $\psi^-|_{G_p}\ne1$  (a local condition),
  \item[(H1)]  $\psi$  has conductor  ${\frc'}$  such that  ${\frc'}+{\frc'}^c=O_K$  and  $\pG^*\nmid{\frc'}$, 
\item[(H2)]   the character
$\psi^-$  has order at least  $3$  (a global condition),
\item[(H3)]  the class number  $h_K$  of  $K$  is prime to  $p$,
\item[(H4)]  the class number  $h_{K(\psi^-)}$  of the splitting field  $K(\psi^-)=\oQ^{\Ker(\psi^-)}$  of  $\psi^-$  is prime to  $p$.
\end{enumerate}
Assuming  $\T\ne\Lambda$,
the minimal presentation we found in \cite{hida-cyclicity} has the following form: 
\begin{equation}\label{presen}
\T\cong\Lambda[[T_-]]/(T_-S_+).
\end{equation}
Here the ring  $\Lambda[[T_-]]$  is the one variable power series ring over  $\Lambda$  with variable  $T_-$  
and  $S_+$  is a power series in  $\Lambda[[T_-]]$  prime to  $T_-$.
We have an involution  $\s$  over  $\Lambda$  acting on  $\T$   
corresponding to the operation  $\rho\mapsto\rho\otimes\chi$  for  $\chi:=\left(\frac{K/\Q}{}\right)$.
Non-triviality of  $\s$  is equivalent to the existence of a non CM component of  $\Spec(\T)$.
This involution  $\s$  extends to an involution  $\s_\infty$  
of  $\Lambda[[T_-]]$  so that  $\s_\infty(T_-)=-T_-$  and  $\s_\infty(S_+)=S_+$.
To prove the presentation, we made in \cite{hida-cyclicity} some extra conditions 
whose removal will be discussed in \S\ref{univR}.
To have one-variable presentation in \eqref{presen},
we need to assume  $p\nmid h_K$  (otherwise, we could have variables fixed by  $\s_\infty$  in the presentation).

Let  $\T_+$  be the subring of  $\T$  fixed by  $\s$.
Let  $\T^\text{\rm nCM}:=\Lambda[[T_-]]/(S_+)$  and  $\T^\text{\rm CM}:=\Lambda[[T_-]]/(T_-)=\Lambda$,
and write  $\Theta$  for the image of  $T_-$  in  $\T$.
Since the CM Galois deformation  $\rho_{\T^\text{CM}}$  into  $\GL_2(\T^\text{CM})$  is induced from  $K$,
the involution  $\s$  is trivial on  $\T^\text{CM}$; so, the image  of  $T_-$  with  $\s_\infty(T_-)=-T_-$
vanishes in  $\T^\text{CM}$; so,  $\Theta$  lives in  $(0\times\T^\text{\rm nCM})\cap\T$
(this is also clear from  $\T^\text{CM}=\Lambda[[T_-]]/(T_-)$).
This $\Theta$  plays the role of  $L_p^-(\psi^-)$  in the main text 
in the sense that  $\T^\text{nCM}/(\Theta)\cong\T^\text{nCM}\otimes_\T\T^\text{CM}\cong\Lambda/(L_p^-(\psi^-))$
(the identity of the congruence modules) even if  $\Theta$  lives in  $\T^\text{nCM}$  while  $L_p^-(\psi^-))\in\T^\text{CM}=\Lambda$.
Then  $\T\hra\T^\text{\rm nCM}\times\T^\text{\rm CM}$  whose cokernel is isomorphic to  $\T^\text{\rm nCM}/(\Theta)$  
as  $\T$-modules and  $(\Theta)=(0\times\T^\text{\rm nCM})\cap\T$.
The congruence module  $\T^\text{\rm nCM}/(\Theta)$  after extending scalars to
$\WG$  is isomorphic to  $(\T^\text{\rm CM}\wot_W \WG)/(\lc^-_p(\psi^-))$
for the anticyclotomic Katz $p$-adic L-function  $\lc^-_p(\psi^-)$  (of branch character  $\psi^-$, denoted in the main text as $\frL^-_p(\psi^-)$; see \corref{Ycor}); so,
$\Theta$  is a generator of  $I_\text{\rm CM}$  and in this sense, we regard  $\Theta\in\T^\text{\rm nCM}$.

Let  $\PG$  be a prime factor of  $\pG$  in  $K(\orho)$  (the splitting field of  $\orho$).
Write the image of  $U(p)$
in  $\T$ as  $u$.
Writing the local Artin symbol  $[x,K_\pG]$  (identifying  $K_\pG=\Qp$),
for the residual degree  $f$  of  $\PG$,
the semi-simplification of
$\rho_\T([p,K_\pG]^f)$  is a conjugate of  $\left(\begin{smallmatrix}u^{-f}&0\\0&u^f\end{smallmatrix}\right)$
as  $\det(\rho_\T([p,\Qp]^f))=1$.
Note here  $\psi^-([p,K_\pG]^f)=1$  and  $u^{2f}\equiv\psi^-([p,K_\pG]^f)=1\mod\mG_\T$  (as  $u\equiv\psi([p,K_\pG])\mod\mG_\T$).
Put $a=u^{2f}-1\in\mG_\T$,  and
for the $\Zp$-subalgebra  $W_1$  of  $W$  generated by the values of  $\psi^-$  over  $G_p$,
define  $\Lambda_1:=W_1[[T,a]]$  to be the subalgebra
of  $\T$  topologically generated over  $W_1[[T]]\subset\Lambda$  by  $a$.
 
\begin{thm}
	\label{thm: app main}
Let the notation be as above.
Assume  {\em(H0--4)}  and  $\s\ne\id$  on  $\T$.
Let  $\ov I_\pG$  be the wild $\pG$-inertia subgroup of  $\Gal(K(\rho_\T)/\Q)$ for the splitting field  $K(\rho_\T)$  of  $\rho_\T$.
Then we have a decomposition  $\ov I_\pG=\Uc\rtimes\Gal(\Q_\infty/\Q)$  for the $\Zp$-extension  $\Q_\infty/\Q$,
where  $\Uc$  is an abelian group mapped by  $\rho_\T$  into the unipotent radical of a Borel subgroup in  $\GL_2(\T)$  whose logarithmic image  $\uG= Lie(\Uc)$
(in the nilpotent Lie $\Lambda$-algebra  $\T$)  is equal to  $\Theta\cdot\Lambda_1$.
In short, we have an isomorphism  $\rho_\T(\ov I_\pG)\cong\left\{\left(\begin{smallmatrix}t^{\Zp}&\Theta\Lambda_1\\
0&1\end{smallmatrix}\right)\right\}\subset\GL_2(\T)$, where  $t=1+T\in\Lambda$.
\end{thm}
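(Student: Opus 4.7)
\emph{Step 1 (local triangulation and semidirect structure).} The first step is to put the local representation in upper-triangular form
\[
\rho_\T|_{G_\gp} \sim \begin{pmatrix} \delta_1 & b \\ 0 & \delta_2 \end{pmatrix},
\]
with $\delta_2$ unramified satisfying $\delta_2(\text{Frob}_\gp) = u$, and $\delta_1|_{I_\gp}$ nontrivial by (H0). The upper-right entry $b$ is a cocycle for $\delta_1\delta_2^{-1}$. Restricting to the wild inertia $\ov I_\gP$, the unramified character $\delta_2$ becomes trivial, while $\delta_1|_{\ov I_\gP}$ factors through $\Gal(\Q_\infty/\Q)$ via the cyclotomic/weight character, producing the diagonal $t^{\Zp}$ with $t = 1+T$. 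Setting $\Uc := \ker(\ov I_\gP \twoheadrightarrow \Gal(\Q_\infty/\Q))$, the restriction $b|_\Uc$ is a $\Zp$-module homomorphism into the additive group of $\T$; this yields the semidirect decomposition $\ov I_\gP = \Uc\rtimes\Gal(\Q_\infty/\Q)$ and the abelianness of $\Uc$.

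\emph{Step 2 (containment $\uG \subseteq \Theta\cdot\Lambda_1$).} On the CM quotient $\T\twoheadrightarrow\T^{\text{CM}} = \Lambda$ the representation is induced from $K$ and hence diagonal at the split prime $\gp$, so $b\equiv 0\pmod{(\Theta)}$ where $(\Theta) = \ker(\T\to\T^{\text{CM}})$. Writing $b = \Theta\cdot c$, conjugation by $\text{Frob}_\gp$ rescales $b$ by $\delta_1\delta_2^{-1}(\text{Frob}_\gp)$, so that after $f$ iterations $b$ is rescaled by a unit power of $u^{\pm 2f}$; since $u^{2f}-1 = a$ and the diagonal entries have values in $W_1$ on $\text{Frob}_\gp^f$, the induced action topologically produces a $W_1[[a]]$-action on $c$. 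Combining this with the $\Lambda$-action from the weight direction exhibits $\uG$ as a $\Lambda_1$-submodule of $\Theta\cdot\T$, hence $\uG\subseteq\Theta\cdot\Lambda_1$.

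\emph{Step 3 (equality).} The reverse inclusion $\uG\supseteq\Theta\cdot\Lambda_1$ is the content of local indecomposability on the non-CM component. Here I would invoke the presentation (\ref{presen}) $\T = \Lambda[[T_-]]/(T_-S_+)$ together with the congruence identification $\T^{\text{nCM}}/(\Theta)\cong\Lambda/(L_p^-(\psi^-))$: a strictly smaller $\Lambda_1$-submodule would produce a proper non-CM quotient of $\T$ on which $\rho$ becomes locally split at $\gp$, violating the universality of $\T$ as the ordinary minimal deformation ring and contradicting the fact that $(\Theta) = (0\times\T^{\text{nCM}})\cap\T$ precisely cuts out the CM locus. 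The class-number hypotheses (H3) and (H4) are used at this point to rule out additional Galois variables coming from the class groups of $K$ or $K(\psi^-)$, keeping the presentation one-variable and $\Lambda_1$ equal to $W_1[[T,a]]$.

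The main obstacle is the sharp equality in Step 3. One must show that Frobenius conjugation on $c = \Theta^{-1}b$ topologically generates the entire ring $W_1[[a]]$ (not a proper subring), and that the combined action with the cyclotomic/weight direction reaches all of $\Lambda_1$. This is where the interpretation of $\Theta$ as a generator of the congruence ideal — linked, after extending scalars to $\WG$, to the anticyclotomic Katz $p$-adic $L$-function $\lc^-_p(\psi^-)$ — must be used in conjunction with the minimal presentation \eqref{presen} to transfer the algebraic rigidity of $\T/\Lambda$ into a Galois-cohomological lower bound on the unipotent image.
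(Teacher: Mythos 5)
Your Step 1 matches the paper's setup (ordinarity gives the mirabolic containment, the split of $1\to\Uc\to I\to\Tc\to 1$ comes from the $\Delta$-action via $\psi^-$), but Steps 2 and 3 both have genuine gaps, and you correctly sense the second one.

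In Step 2, showing that $\uG$ is a $\Lambda_1$-submodule of $\Theta\T$ under the adjoint action of $\Tc\cdot\vp^{\Zp}$ does not by itself give $\uG\subseteq\Theta\Lambda_1$: the module $\Theta\T_+$ has many $\Lambda_1$-submodules that are not of the form $\Theta\cdot J$ for an ideal $J\subseteq\Lambda_1$, and nothing in your argument pins the module down. The missing ingredient is a cyclicity statement. The paper supplies it by an explicit piece of local Iwasawa theory (Proposition~\ref{Iwprop}): the wild inertia Galois group $X'[\psi^-]$ (over the composite of the cyclotomic and unramified $\Zp$-extensions of the $\pG$-completion of $K(\psi^-)$) is a \emph{cyclic} $W_1[[\Gamma\times\Upsilon]]$-module, and $\Uc$ is a quotient of it through $\Lambda_1$. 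Cyclicity over $\Lambda_1$ together with a lower bound (see below) forces $\Uc\cong\Lambda_1$ and hence $\uG=\Theta\Lambda_1$ exactly. Your proposal never invokes anything like this and therefore cannot close the containment even in one direction.

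In Step 3 your ``universality of $\T$'' argument is not what works, and as stated it is circular: one cannot conclude from a hypothetical small $\Lambda_1$-submodule that $\rho$ is locally split on a proper non-CM quotient of $\T$, because the quotient of $\T$ cut out by an inertia condition need not exist in the minimal ordinary deformation category (inertia conditions at $p$ are not part of the defining data). The paper instead argues by contradiction using class field theory and (H4): if the reduction $\ov u$ of $u|_{I_1}$ modulo $\mG_+^{\text{nCM}}\T_-$ vanished, the cocycle $u$ would produce a nontrivial everywhere-unramified $p$-abelian extension of $K(\psi^-)$, contradicting $p\nmid h_{K(\psi^-)}$. Nakayama then upgrades the nonvanishing of $\ov u$ to $\T_+ u(I_1)=\T_-$, which together with the Iwasawa-theoretic cyclicity gives $\Uc\cong\Lambda_1$ and hence equality. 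So the essential missing ideas in your proposal are (i) Proposition~\ref{Iwprop}'s cyclicity of the local wild inertia over $W_1[[\Gamma\times\Upsilon]]$, and (ii) the (H4)-based unramified-extension contradiction that furnishes the Nakayama nonvanishing input.
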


This theorem supplies us with a very explicit unipotent element  $\left(\begin{smallmatrix}1&\Theta\\0&1\end{smallmatrix}\right)$
in the image of  $\rho_\T$  with  $(\Theta\T\wot_W \WG)\cap\Lambda_\WG=(\lc_p^-(\psi^-))$;
therefore, we can answer the question of Greenberg:

\begin{cor}
	\label{cor: app1}
	Assume  {\em(H0--4)}  and  $\s\ne\id$  on  $\T$.
	For all prime divisors  $P\in\Spec(\T^\text{\rm nCM})$ with associated Galois representation  $\rho_P$, the following conditions are equivalent:
	\begin{enumerate}
		\item the Galois representation  $\rho_P$  is completely reducible over the inertia group  $I_p$  at  $p$,
		\item  $P\in\Spec(\T^\text{\rm nCM})\cap\Spec(\T^\text{\rm CM})$,
		\item  $P|(\lc^-_p(\psi^-)\Lambda_\WG\cap\Lambda)$.
\end{enumerate}
\end{cor}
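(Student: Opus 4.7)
The plan is to read off each of the three conditions from the two identifications recorded in the Summary --- namely $(\Theta)=(0\times\T^{\text{\rm nCM}})\cap\T$ inside $\T^{\text{\rm nCM}}$ and $\Theta\T\wot_W\WG\cap\Lambda_\WG=(\lc_p^-(\psi^-))$ after base change to $\WG$ --- combined with the explicit shape of $\rho_\T(\ov I_\pG)$ supplied by \thmref{thm: app main}.

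For the equivalence (1)~$\Leftrightarrow$~(2), I would specialize $\rho_\T$ at a prime divisor $P\in\Spec(\T^{\text{\rm nCM}})$ to obtain $\rho_P$. By \thmref{thm: app main}, the image $\rho_P(\ov I_\pG)$ is conjugate to the set of matrices $\left(\begin{smallmatrix}t^{\Zp}&(\Theta\bmod P)\Lambda_1\\ 0 & 1\end{smallmatrix}\right)$. By $p$-ordinarity, $\rho_P|_{I_p}$ is upper triangular, and the ratio of its diagonal characters on the cyclotomic $\Zp$-quotient of $\ov I_\pG$ is $t^{\Zp}$, which remains non-trivial modulo $P$ since a non-CM prime divisor cannot contain $T$. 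Consequently $\rho_P|_{I_p}$ is completely reducible iff the extension class vanishes, iff $(\Theta\bmod P)\Lambda_1=0$, iff $\Theta\in P$ (using that $\Lambda_1\supset W_1$ is not contained in $P$). Because $(\Theta)=(0\times\T^{\text{\rm nCM}})\cap\T$ cuts out the scheme-theoretic intersection of the two components, $\Theta\in P$ is precisely the statement that $P\in\Spec(\T^{\text{\rm nCM}})\cap\Spec(\T^{\text{\rm CM}})$, which is (2).

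For (2)~$\Leftrightarrow$~(3), I would invoke the Summary's identity $\Theta\T\wot_W\WG\cap\Lambda_\WG=(\lc_p^-(\psi^-))$. Thus $\Theta\in P$ is equivalent, after extending scalars to $\WG$ and contracting back to $\Lambda$, to the divisibility $P\mid(\lc_p^-(\psi^-)\Lambda_\WG\cap\Lambda)$, i.e., (3).

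The most delicate point is the first equivalence: \thmref{thm: app main} describes $\rho_\T$ only on $\ov I_\pG$, so I must verify that complete reducibility on all of $I_p$ is equivalent to vanishing of the extension class on this subgroup. This reduces to checking that the prime-to-$p$ tame quotient of $I_p$ can contribute no additional non-trivial extension --- automatic by Maschke once the two diagonal characters are distinct --- and that those diagonal characters genuinely stay distinct after specialization at every non-CM prime divisor $P$. Both should follow from $p$-ordinarity together with the hypothesis $\s\neq\mathrm{id}$, but constitute the subtlest bookkeeping in the proof.
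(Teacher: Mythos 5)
Your proof tracks the paper's almost line by line: both pivot on the characterization ``$\rho_P|_{I_p}$ completely reducible $\Leftrightarrow\Theta\in P$'' read off from Theorem~\ref{inertiathm}, and then transport this through the congruence-module identities (Corollary~\ref{Ycor} together with the cokernel computation $\Coker(\T\wot_W\WG\ra\T^{\text{\rm CM}}\wot_W\WG\times\T^{\text{\rm nCM}}\wot_W\WG)\cong\Lambda_\WG/(\lc_p^-(\psi^-))$) to reach conditions (2) and (3). The one point to revisit is your side-claim that a non-CM prime divisor cannot contain $T$, used to argue that the two diagonal characters remain distinct mod~$P$: this is neither clearly true (height-one primes of $\T^{\text{\rm nCM}}$ over $(T)\subset\Lambda$ do exist) nor needed, since the reduction from $I_p$ to $\ov I_\pG$ is governed solely by the vanishing of prime-to-$p$ tame cohomology on a pro-$p$ module, after which complete reducibility is equivalent to the vanishing of $\Uc\bmod P$, i.e.\ $\Theta\in P$, regardless of whether the diagonal characters stay distinct.
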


As described in the main text, from \cite{emerton2004} and \cite[Prop.~11]{ghate2005},
the above corollary implies:

\begin{cor}[Coleman's question]
	\label{cor: app2}
	Assume  {\em(H0--4)}.
For every classical modular form  $f$  of weight  $k\ge2$  and of level  $N$
with residual representation  $\orho$, 
write  $g$  for the $p$-critical stabilization of the primitive form associated to  $f$.
Then  $g$ is in the image of  $(q\frac{d{}}{dq})^{k-1}$  if and only if  $f$  has complex multiplication.
\end{cor}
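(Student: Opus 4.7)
The plan is to combine Corollary~\ref{cor: app1} with Emerton's characterization of the image of the $\theta$-operator \cite{emerton2004} and Ghate's reformulation \cite[Prop.~11]{ghate2005} linking that image to local indecomposability, as already indicated by the remark preceding the corollary.

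First I would unwind the dictionary. By the $R=\T$ theorem, any classical weight-$k$ newform $f$ of level $N$ with residual representation $\orho$ corresponds to a height-one arithmetic prime $P_f\subset\T$, and its associated Galois representation satisfies $\rho_f\cong\rho_\T\bmod P_f$. Since the universal deformation carried by $\T^\text{\rm CM}$ is induced from $K$, the form $f$ has complex multiplication by $K$ precisely when $P_f\in\Spec(\T^\text{\rm CM})$. The $p$-critical stabilization $g$ of $f$ shares its global Galois representation with $f$, so the arithmetic input is entirely controlled by $\rho_f=\rho_{P_f}$.

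I would then apply the Emerton--Ghate criterion: $g$ lies in the image of $\theta^{k-1}=(q\frac{d}{dq})^{k-1}$ if and only if $\rho_f|_{I_p}$ is completely reducible. For the ``if'' direction, if $f$ has CM then $\rho_f=\Ind_K^\Q\psi_f$, and because (H0) together with $p$-ordinarity forces $p$ to split in $K$, the restriction $\rho_f|_{G_p}$ splits as a direct sum of two characters; the criterion then yields $g\in\image(\theta^{k-1})$. For the converse, suppose $g\in\image(\theta^{k-1})$ but $f$ is not CM. Then $P_f\in\Spec(\T^\text{\rm nCM})$, and by the criterion $\rho_{P_f}|_{I_p}$ is completely reducible, so condition~(1) of Corollary~\ref{cor: app1} holds at $P_f$. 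That corollary then forces $P_f\in\Spec(\T^\text{\rm nCM})\cap\Spec(\T^\text{\rm CM})$; hence $\rho_f=\rho_{P_f}$ is induced from $K$, contradicting the assumption that $f$ is non-CM.

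The hard part will be the careful translation between modular-forms objects and the local Hecke algebra $\T$ of Corollary~\ref{cor: app1}: that a classical weight-$k$ newform with residual representation $\orho$ does arise from a height-one arithmetic prime of the specific minimal ordinary local Hecke algebra $\T$ constructed above, that its $p$-critical stabilization shares its global Galois representation with the newform itself, and that the Emerton--Ghate criterion is invoked in exactly the form needed---complete reducibility on the inertia group $I_p$, rather than mere Frobenius-semisimplicity on the full decomposition group. Once this bookkeeping is in place, the logic above is essentially tautological given Corollary~\ref{cor: app1}.
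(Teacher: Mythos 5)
Your proposal is correct and follows the same route the paper takes: the paper simply asserts the corollary as a consequence of Corollary~\ref{cor: app1} together with Emerton's theorem and Ghate's reformulation, and your write-up fills in exactly that dictionary between arithmetic primes of $\T$, CM-ness, the Emerton--Ghate criterion for membership in $\image(\theta^{k-1})$, and condition (1) of Corollary~\ref{cor: app1}. The one small phrasing to tighten is in the ``if'' direction: it is $p$-ordinarity alone (forcing $\rho_f|_{D_p}$ to have an unramified quotient, which is impossible if $p$ is inert in $K$) that makes $p$ split in $K$; (H0) is not the operative hypothesis there.
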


\subsection{Presentation of a Galois deformation ring}\label{univR}
For a set  $Q$  of Taylor--Wiles primes satisfying the conditions (Q0--10) in \cite[\S\S3-4]{hida-cyclicity},
we write  $K(\orho)^{(pQ)}$  for the maximal $p$-profinite extension of  $K(\orho)$  unramified outside  $\{p\}\sqcup Q$.
We simply write  $K(\orho)^{(p)}$  for  $K(\orho)^{(pQ)}$  if  $Q=\emptyset$.
Let  $G_Q:=\Gal(K(\orho)^{(pQ)}/\Q)$  and  $H_Q:=\Gal(K(\orho)^{(pQ)}/K)$  with  $G=G_\emptyset$  and  $H=H_\emptyset$.
We first note that  $G_Q=\Gal(K(\orho)^{(pQ)}/K(\orho))\rtimes\Gal(K(\orho)/\Q)$  and
$H_Q=\Gal(K(\orho)^{(pQ)}/K(\orho))\rtimes\Gal(K(\orho)/K)$  as  $p>2$  and  $p\nmid[K(\orho):\Q]$.
We fix such a decomposition; so, $\Gal(K(\orho)/\Q)\cong\Delta_G$
for a subgroup  $\Delta_G$  of  $\Gal(K(\orho)^{(p)}/\Q)$.
Write  $\Delta\subset\Delta_G$  for the subgroup isomorphic to  $\Gal(K(\orho)/K)$; so,
$[\Delta_G:\Delta]=2$.

Let  $N=DN_{K/\Q}({\frc'})$.
Let  $\hb^Q$  be the big Hecke algebra described in \cite[\S1]{hida-adjoint} for each  $Q$.
We have a local ring  $\T^Q$  of  $\hb^Q$  whose residual representation is isomorphic to  $\orho$.
Let  $\rhob^Q:G_Q\ra\GL_2(\T^Q)$  be the Galois representation of  $\T^Q$  such that  $\Tr(\rhob^Q(\Frob_l))$  for primes  $l$  outside  $\{l|Np\}\sqcup Q$
is given by the image in  $\T^Q$  of the Hecke operator  $T(l)$.
On  $\T^Q$,
we have an involution  $\s$  with the property that  $(\rhob^Q)^\s\cong\chi\otimes\rhob^Q$
for the quadratic character  $\chi=\left(\frac{K/\Q}{}\right)$.
Put  $\T^Q_\pm:=\{h\in\T^Q|\s(h)=\pm h\}$.
Let  $\Ib^Q:=\T^Q(\s-1)\T^Q=\T^Q\T^Q_-$ (the $\s$-different) and  $\T^Q_\text{\rm CM}:=\T^Q/\Ib^Q$.
It is known that  $\T^Q$  and  $\T^Q_\text{\rm CM}$  are reduced algebras finite flat over  $\Lambda$.
Further we have an algebra decomposition  $\T^Q\otimes_\Lambda\Frac(\Lambda)=\Frac(\T^Q_\text{\rm CM})\times\Frac(\T^Q_\text{\rm nCM})$
for  $\T^Q_\text{\rm nCM}\cong\T^Q/(\Frac(\T^Q_\text{\rm CM})\times0)\cap\T^Q$.  
In the above notation,
if  $Q=\emptyset$,
we remove the superscript or subscript  $Q$  from the notation.
If  $\s$  is the identity on  $\T$,
we have  $\T^\text{\rm nCM}=0$.
Otherwise the subring  $\T^\text{\rm nCM}_+$  fixed by  $\s$  is a non-trivial $\Lambda$-algebra.
The theorem proven in \cite[Thms.~B and 5.4]{hida-cyclicity} is:

\begin{thm}\label{TWinvthm}
Assume {\em (H0-H4)},  $\s\ne\id$  on  $\T$  and that  $p$  splits in  $K$.
Let  $\Spec(\T)$  be a connected component of  $\Spec(\hb)$
associated to the induced Galois representation $\orho=\Ind_K^\Q\ov\psi$  
for the reduction  $\ov\psi$  of  $\psi$ modulo  $\mG_W$
for the maximal ideal  $\mG_W$  of  $W$.
Then the following assertions hold:
\begin{enumerate}
  \item  We have presentations  $\T\cong\Lambda[[T_-]]/(T_-S_+)$,
$\T_+\cong\Lambda[[T_-^2]]/(T_-^2S_+)$, $\T^\text{\rm nCM}\cong\Lambda[[T_-]]/(S_+)$
and  $\T_+^\text{\rm nCM}\cong\Lambda[[T_-^2]]/(S_+)$  such that the involution $\s_\infty:T_-\mapsto-T_-$  over  $\Lambda$
fixes the power series  $S_+\in \Lambda[[T_-^2]]$  and induces  $\s$  on  $\T$.
  \item  The rings   $\T$,  $\T_+$, $\T^\text{\rm nCM}$, $\T_+^\text{\rm nCM}$  
are all local complete intersections free of finite rank over $\Lambda$.
  \item  The $\T^\text{\rm nCM}$-ideal $\Ib=\T(\s-1)\T\subset\T^\text{\rm nCM}$  is principal
and is generated by the image  $\Theta$  of  $T_-$
with $\theta:=\Theta^2 \in \T_+$, and  $\Theta$  is not a zero divisor.
The element $\Theta$ generates the $\T_+^\text{\rm nCM}$-module $\T_-^\text{\rm nCM}$  which is free over  $\T_+^\text{\rm nCM}$, and
$\T^\text{\rm nCM}= \T_+^\text{\rm nCM}[\Theta]$  is free of rank $2$ over $\T_+^\text{\rm nCM}$.
\end{enumerate}
\end{thm}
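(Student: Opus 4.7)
The plan is to deduce the theorem by $\s$-equivariant Taylor--Wiles patching, following the method of \cite{hida-cyclicity}. For each Taylor--Wiles set $Q$ satisfying (Q0--10), consider the universal ordinary minimal deformation ring $R^Q$ for $\orho$ unramified outside $\{p\}\cup Q$ with controlled ramification at primes of $Q$. An $R=T$ theorem in this setting identifies $R^Q\cong\T^Q$, and the involution $\s$ (induced by $\rho\mapsto\rho\otimes\chi$) acts compatibly on both sides.

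The central computation is the $\s$-eigenspace decomposition of the tangent and obstruction spaces of $R^Q$ over the auxiliary group ring $\Lambda^Q:=\Lambda[\Delta_Q]$, carried out via Poitou--Tate duality. Using (H0), (H2), and the splitting of $p$ in $K$, the $\s$-anti-invariant tangent space turns out to be one-dimensional; the decisive input is (H3), which kills the class-group contribution of $K$ to the anti-invariant Selmer group and thereby forces exactly one anti-invariant generator $T_-$. In parallel, (H4) controls the $\s$-invariant obstruction term and produces a single $\s$-invariant relation $S_+$. Patching over a sequence $Q_n$ then yields a limit ring $R_\infty$ which is $\s$-equivariantly of the form $\Lambda_\infty[[T_-]]/(T_-S_+)$, with $\s_\infty(T_-)=-T_-$ and $\s_\infty(S_+)=S_+$ (hence $S_+\in\Lambda_\infty[[T_-^2]]$). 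The specific factored shape of the relation, rather than a more general power series, is forced by the decomposition of $\Spec(\T)$ into non-CM and CM components: on the CM component $\s$ acts trivially so the anti-invariant element $T_-$ must vanish, while on the non-CM component the complementary factor $S_+$ must vanish, so the single relation is necessarily their product.

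Descending from $R_\infty$ to $R=\T$ by killing the augmentation ideal of $\Lambda_\infty/\Lambda$ yields the presentation in assertion~(1); the presentations for $\T_+$, $\T^\text{nCM}$, $\T_+^\text{nCM}$ then follow by taking $\s_\infty$-invariants and by modding out by $T_-$ or $S_+$ respectively. Assertion~(2) is immediate: each of the four rings is the quotient of a regular local ring by a single regular element, hence a local complete intersection, and finite freeness over $\Lambda$ follows from Weierstrass preparation applied to the distinguished power series $T_-S_+$ and $S_+$. For assertion~(3), in $\T^\text{nCM}=\Lambda[[T_-]]/(S_+)$ the image $\Theta$ of $T_-$ is not a zero divisor because $S_+$ is prime to $T_-$ in the UFD $\Lambda[[T_-]]$; since $S_+\in\Lambda[[T_-^2]]$, the subring $\T_+^\text{nCM}=\Lambda[[T_-^2]]/(S_+)$ embeds in $\T^\text{nCM}$ with module decomposition $\T^\text{nCM}=\T_+^\text{nCM}\oplus\Theta\T_+^\text{nCM}$, exhibiting the rank-$2$ freeness; finally $\Ib=\T(\s-1)\T$ is principally generated by $(\s-1)T_-=-2T_-$, a unit multiple of $\Theta$ since $p\ge5$.

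The principal obstacle is the $\s$-equivariant Taylor--Wiles patching itself: one must arrange that the auxiliary primes $Q_n$ can be chosen compatibly with $\s$, and that the Galois cohomology splits cleanly into $\s$-eigenspaces. The global hypotheses (H3) and (H4) are precisely what ensures that class-group contributions vanish on the critical $\s$-anti-invariant side, producing the one-variable presentation \eqref{presen}; without $p\nmid h_K$ one would pick up additional anti-invariant variables corresponding to the $p$-part of the class group of $K$, breaking the one-variable shape of the presentation.
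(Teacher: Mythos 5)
Your proposal follows the same strategy as the paper — both ultimately rest on the $\s$-equivariant Taylor--Wiles patching of \cite{hida-cyclicity}, with the single relation $S$ inheriting the property $\s_\infty(S)=-S$ so that $T_-\mid S$, hence $S=T_-S_+$ with $S_+$ $\s_\infty$-fixed. Your derivations of (2) (complete intersection plus Weierstrass) and (3) (freeness of rank $2$ via $\Lambda[[T_-]]=\Lambda[[T_-^2]]\oplus T_-\Lambda[[T_-^2]]$, and $\Ib$ generated by $(\s-1)\Theta=-2\Theta$) are correct and are essentially what the paper derives from \cite[Thm.~B]{hida-cyclicity}.

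There are two issues worth flagging. First, you have the role of (H3) backwards. You write that $p\nmid h_K$ ``kills the class-group contribution of $K$ to the anti-invariant Selmer group and thereby forces exactly one anti-invariant generator $T_-$,'' and later that without (H3) one would ``pick up additional anti-invariant variables.'' But the appendix states the opposite: if $p\mid h_K$, the presentation would acquire extra variables \emph{fixed} by $\s_\infty$ (i.e.\ $\s_\infty$-invariant), coming from extra $p$-abelian extensions of $K$ unramified outside $p$ beyond the cyclotomic one; the anti-invariant generator $T_-$ is present regardless, being the direction that leaves the CM locus (it exists precisely because $\s\ne\id$ on $\T$). So (H3) is what guarantees there are no $\s_\infty$-invariant variables over $\Lambda$, giving the one-variable shape $\Lambda[[T_-]]$.

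Second, the paper's proof is not merely a citation of \cite{hida-cyclicity}: roughly half of it is devoted to explaining why the extra hypotheses (h2) (that $N_{K/\Q}(\frc')$ is square-free, so $N$ is cube-free) and (h3) (that $p\nmid N\prod_{l\mid N}(l-1)$), which are assumed in \cite{hida-cyclicity}, can be dispensed with under (H0--4) alone. This is done by rerunning the minimal deformation problem over the pro-$p$ quotient $G=\Gal(K(\orho)^{(p)}/\Q)$ as in \cite{DFG2004}, through which all minimal deformations factor when $p\nmid|\orho(I_l)|$, and by invoking newform theory for reducedness in place of (h2). Your proposal does not address this; if one simply re-derives the presentation of \cite{hida-cyclicity} verbatim, the theorem is obtained only under the stronger hypotheses (h2), (h3), not under (H0--4) as stated. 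You would need to add the same reduction to the minimal problem over $G$ (or an equivalent device) to close this gap.

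Finally, a minor remark: your justification of the factored shape $S=T_-S_+$ via ``on the CM component $T_-$ must vanish'' can be made rigorous (using that $\T^{\text{\rm CM}}$ is a nontrivial $\Lambda$-flat quotient of $\T/(\Theta)$, forcing $S(0)=0$), but it is less direct than the paper's route, which reads $\s_\infty(S)=-S$ straight off the $\s$-eigenspace structure of the patched module of relations; and without that anti-invariance you do not automatically get $\s_\infty(S_+)=S_+$, which you need for the description of $\T_+$.
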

\begin{proof}
The result \cite[Thm.\ 4.10 and Prop.\ 6.2]{hida-cyclicity} asserts that  $\T=\Lambda[\Theta]$  with  $\s(\Theta)=-\Theta$; 
so, we have a surjection  $\pi:\Lambda[[T_-]]\twra\T$  with  $\pi(T_-)=\Theta$,  
and \cite[Thms.~A and B]{hida-cyclicity} asserts that  $\T$  is a local complete intersection over  $\Lambda$.
Thus  $\T\cong\Lambda[[T_-]]/(S_-)$  for a power series  $S_-\in\Lambda[[T_-]]$.
By the construction of  $\pi$  of \cite[\S4]{hida-cyclicity}    via a Taylor--Wiles patching argument,
we have an involution  $\s_\infty$  of $\Lambda[[T_-]]$  lifting  $\s$  such that  $\s(T_-)=-T_-$  and  $\s(S_-)=-S_-$;
so, we have  $T_-|S_-$  and hence  $S_-=T_-S_+$.
Since  $\T$  is reduced,  $T_-$  and  $S_+$  are co-prime in  $\Lambda[[T_-]]$.
This shows the assertion (1).
The assertions (2) and (3) follow from \cite[Thm.~B]{hida-cyclicity}.

Strictly speaking, the patching argument is given in \cite{hida-cyclicity} under the following extra assumptions:
\begin{enumerate}
  \item[(h2)]  $N:=DN_{K/\Q}({\frc'})$  for an  $O$-ideal  ${\frc'}$  prime to  $D$  
with square-free  $N_{K/\Q}({\frc'})$ 
(so, $N$  is cube-free),
  \item[(h3)]  $p$  is prime to  $N\prod_{l|N}(l-1)$  for prime factors  $l$  of  $N$.
\end{enumerate}
Here is the reason why we can remove these two assumptions:
We studied the minimal deformation problem in \cite{hida-cyclicity} over the absolute Galois group  $G_\Q$,
but as was explained in \cite[pg.~717]{DFG2004}, under the condition that  $p\nmid|\orho(I_l)|$  (which holds in our case),
all minimal deformations factor through  $G$, and
considering the deformation problem over  $\{G_Q\}_Q$  for appropriate sets  $Q$  of Taylor--Wiles primes
satisfying \cite[\S3 (Q0--8)]{hida-adjoint},
every argument in the proof of \cite[Thm.~5.4]{hida-cyclicity} goes through for the above choice of  $\T^Q$ (as easily checked),
and thus we obtain the theorem.
Indeed,
we used (h3) in \cite{hida-cyclicity} just because
the universal minimal ordinary Galois representation of prime-to-$p$ conductor  $N$ 
(considered in \cite{hida-cyclicity}) factors through  $G$; so,
just imposing deformations to factor through  $G$  the arguments simply work; so,
we do not need to assume (h3).
The condition (h2) is assumed to guarantee the big Hecke algebra 
is reduced, but again, all deformations over  $G$  has prime-to-$p$ conductor equal to  $N$  which is equal to the prime-to-$p$ conductor of its determinant
(the Neben character).
Then, by the theory of new forms,
the Hecke algebra is reduced if its tame character has conductor equal to the tame level;
so, we do not need (h2).
\end{proof}
Since  $\s$  acts trivially on  $\T^\text{\rm CM}=\T/(\Theta)$,
writing  $\rho:=(\rho_\T\ \modulo(\Theta))$,
we find  $\rho\cong\rho\otimes\chi$  for  $\chi=\left(\frac{K/\Q}{}\right)$.
Note that  $\rho$  is a minimal deformation of  $\orho$; so, 
it factors through  $G$.
Thus by \cite[Lem.~3.2]{doi-hida-ishii}  applied to  $\gc=G$  and  $\hc=H$  (under the notation of the lemma), 
we find  $\rho\cong\Ind_K^\Q\Psi$  for a character  $\Psi:H\ra\T^{\text{\rm CM},\times}$  unramified outside  ${\frc'}\pG$  deforming  $\ov\psi$.
Let  $\Gamma_\pG$  be the Galois group over  $K(\orho)$  of the maximal $p$-abelian extension of  $K$  inside  $K(\orho)^{(p)}$
unramified outside  $\pG$.
By  $p\nmid h_K$,
$\Gamma_\pG\cong O_\pG^\times\otimes_\Z\Zp$,
and hence  $W[[\Gamma_\pG]]\cong\Lambda$ canonically via  $\Zp^\times= O_\pG^\times$.
We identify the two rings.
Since  $p\nmid[K(\orho):K]$,
there exists a class field  $K(\pG)/K$  in  $K(\orho)^{(p)}$  with  $\Gal(K(\pG)/K)\cong \Gamma_\pG$  by Artin symbol.
Define a character  $\Phi:G_K\ra W[[\Gamma_\pG]]^\times=\Lambda^\times$
given by  $\Phi(\tau)=\psi(\tau)\tau|_{K(\pG)}$.
Then  $\Phi$  factors through  $H$.
Since  $(\Lambda,\Phi)$  for the character  $\Phi:H\ra\Lambda^\times$
is a universal pair for the deformation problem of  $\ov\psi$  unramified outside  $\pG{\frc'}$  over the group  $H$,
we have a canonical surjective algebra homomorphism  $\Lambda\twra\T^\text{\rm CM}$  inducing  $\Psi$.
By the same argument which proves \cite[Cor.~2.5]{hida-cyclicity}, this is an isomorphism.
We record this fact as
\begin{cor}\label{Lambdavep}
We have isomorphisms  $\T_+/(\theta)\cong\T/(\Theta)=\T^\text{\rm CM}\cong\Lambda$,
where  $\theta=\Theta^2\in\T_+^\text{\rm nCM}$.
\end{cor}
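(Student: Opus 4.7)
The plan splits naturally in two parts. The isomorphism $\T_+/(\theta)\cong\T/(\Theta)$ is immediate from \thmref{TWinvthm}(1): from $\T_+\cong\Lambda[[T_-^2]]/(T_-^2 S_+)$ we get $\T_+/(\theta)\cong\Lambda[[T_-^2]]/(T_-^2)=\Lambda$, and similarly $\T/(\Theta)\cong\Lambda[[T_-]]/(T_-)=\Lambda$. The substantive content is to exhibit a \emph{canonical} isomorphism $\T^\text{\rm CM}\cong\Lambda$ by identifying $\T^\text{\rm CM}$ with a universal deformation ring of $\ov\psi$; canonicity matters because this identification will be used later to relate $\Theta$ with the Katz anticyclotomic $p$-adic $L$-function.

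For this, I would first extract a character. Since $\s$ acts trivially on $\T^\text{\rm CM}$, the reduction $\rho:=\rho_\T\bmod(\Theta)$ satisfies $\rho\otimes\chi\cong\rho$. As $\rho$ is a minimal deformation of $\orho$ it factors through $G$, and \cite[Lem.~3.2]{doi-hida-ishii} then yields $\rho\cong\Ind_K^\Q\Psi$ for some character $\Psi:H\ra(\T^\text{\rm CM})^\times$ lifting $\ov\psi$ and unramified outside $\pG\frc'$.

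Next, I would construct the candidate universal ring explicitly. Using $p\nmid h_K$, the group $\Gamma_\pG$ of the maximal $p$-abelian extension of $K$ inside $K(\orho)^{(p)}$ unramified outside $\pG$ is canonically isomorphic to $O_\pG^\times\otimes_\Z\Zp$, yielding a canonical identification $W[[\Gamma_\pG]]\cong\Lambda$. Via the class field $K(\pG)$, define $\Phi:H\ra\Lambda^\times$ by $\Phi(\tau)=\psi(\tau)\tau|_{K(\pG)}$. The pair $(\Lambda,\Phi)$ is universal for deformations of $\ov\psi$ over $H$ unramified outside $\pG\frc'$, so $\Psi$ induces a canonical $\Lambda$-algebra homomorphism $\pi:\Lambda\ra\T^\text{\rm CM}$.

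The main obstacle is to promote $\pi$ to an isomorphism; here I would follow \cite[Cor.~2.5]{hida-cyclicity}. Surjectivity comes from the trace formula $\Tr\rho(\tau)=\Psi(\tau)+\Psi(\tau^c)$ for $\tau\in H$ (and vanishing on $G\setminus H$): the traces of $\rho$ are images of Hecke operators generating $\T^\text{\rm CM}$, and they all factor through $\pi$. For injectivity, \thmref{TWinvthm}(1)--(2) show that $\T^\text{\rm CM}\cong\Lambda[[T_-]]/(T_-)$ is free of $\Lambda$-rank one, so a $\Lambda$-linear surjection from the domain $\Lambda$ has kernel of rank zero, hence trivial. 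The delicate point is simply tracking the two independent identifications with $\Lambda$ (the one from the presentation and the one from universality) to confirm they genuinely coincide; once this compatibility is unwound, the rank count forces $\pi$ to be an isomorphism.
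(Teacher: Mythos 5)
Your proposal is correct and follows the paper's argument step for step: the trivial action of $\s$ on $\T^\text{\rm CM}=\T/(\Theta)$ gives $\rho\cong\rho\otimes\chi$, \cite[Lem.~3.2]{doi-hida-ishii} produces the inducing character $\Psi$, the identification $W[[\Gamma_\pG]]\cong\Lambda$ under $p\nmid h_K$ and the universal pair $(\Lambda,\Phi)$ yield the canonical surjection $\Lambda\twra\T^\text{\rm CM}$, and the presentation of \thmref{TWinvthm} forces it to be an isomorphism (the paper delegates this last step to the argument of \cite[Cor.~2.5]{hida-cyclicity}). One small remark on your injectivity step: the compatibility check you flag can be sidestepped entirely by composing $\pi$ with the abstract ring isomorphism $\T^\text{\rm CM}\cong\Lambda$ from the presentation and using that a surjective endomorphism of a Noetherian ring is an automorphism; this avoids having to match the universal $\Lambda$-structure with the weight $\Lambda$-structure.
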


Recall  $G=\Gal(K(\rho)^{(p)}/\Q)$  and  $H=\Gal(K(\rho)^{(p)}/K)$.  
Let  $\rho_A:G_K\ra\GL_2(A)$  be a minimal $p$-ordinary deformation of  $\orho$  
for a $p$-profinite local $W$-algebra  $A$  with residue field  $\ov\fb$.
The representation  $\rho_A$  factors through  $G$  by minimality  (so, hereafter, we consider the deformation problem over  $G$).
By $p$-ordinarity,
we have
$$\rho_A|_{G_p}\cong\left(\begin{smallmatrix}\epsilon_A&*\\0&\delta_A\end{smallmatrix}\right)
\ \text{ with  $(\delta_A\ \modulo\mG_A)=\ov\psi^c$,}$$
where  $\mG_A$  is the maximal ideal of the local ring  $A$.
This gives rise to an exact sequence  $\epsilon_A\hra\rho_A\twra\delta_A$.
Realize  $\sG\lG_2(A)$  inside the $A$-linear endomorphism algebra  $\End_A(\rho_A)$,
and write  $F_+(\rho_A)$  the subspace of  $\{T\in\sG\lG_2(A)|T(\epsilon)=0\}=\Hom_A(\delta_A,\epsilon_A)$  
on which  $Ad(\rho_A)$  acts by the character  $\epsilon_A/\delta_A$
(the upper nilpotent Lie subalgebra if  $\rho_A|_{G_p}$  has upper triangular form as above).
Write  $Ad(\rho_A)^*$  for the Galois module  $Ad(\rho_A)\otimes_AA^\vee$  for the Pontryagin dual  $A^\vee$  of  $A$,
where  $G_\Q$  acts on the factor  $Ad(\rho_A)$.
Similarly we put  $F_+(\rho_A)^*:=F_+(\rho_A)\otimes_AA^\vee$  which is a $p$-local Galois module.
Then we define
\begin{equation}\label{sel}
\Sel_\Q(Ad(\rho_A)):=\Ker(H^1(G,Ad(\rho_A)^*)\ra H^1(I_l,\frac{Ad(\rho_A)^*}{F_+(\rho_A)^*})
\times\prod_{l|N}H^1(I_l,Ad(\rho_A)^*))
\end{equation}
for the product of restriction maps to the inertia group  $I_l\subset G$  of  $l$.
In the Galois group  $G$,
for  $l\nmid N$,  $I_l$  is trivial  (as  $K(\orho)^{(p)}/\Q$  is unramified outside  $Np$); so,
in the right-hand-side of the above definition,
$H^1(I_l,Ad(\rho_A)^*))$  for  $l\nmid N$  does not show up.
We write  $M^\vee$  for the Pontryagin dual of a module  $M$.

\medskip
Recall  $K^-_\infty/K$  which is the maximal sub-extension of  $K(\orho)^{(p)}$   $p$-abelian and anticyclotomic over  $K$,
where the word ``anti-cyclotomic"  means complex conjugation $c$  acts on  $\tau\in\Gal(K^-_\infty/K)$  by  $c\tau c^{-1}=\tau^{-1}$.
Lifting  $\tau\in\Gal(K_C/K)$  to   $h\in H$  and restricting  $h$  to  $K^-_\infty$,
we have an isomorphism  $\Gamma_\pG=\Gal(K(\pG)/K)\cong\Gal(K^-_\infty/K)$  (see \cite[pg.~636]{hida2015} and the main text \S3).
Recall:
\begin{defn}\label{Yc}{\em 
Let  $\phi:G_K\ra W^\times$  be a character of order prime to  $p$
whose image generates  $\Zp[\phi]$  in $W$  over  $\Zp$.
Let  $Y_\infty^-$  
be the Galois group over $K^-_\infty(\phi)$ 
of the maximal $p$-abelian extension of  $K^-_\infty(\phi)$  unramified outside  $\pG$  
and totally split at  $\pG^*$.
Regarding  $\Gal(K(\phi)/K)$  as a subgroup of  $\Gal(K^-_\infty(\phi)/K)\cong\Gal(K(\phi)/K)\times\Gal(K^-_\infty/K)$,
define 
$Y_\infty^-(\phi):=Y_\infty^-\otimes_{\Zp[\Gal(K(\phi)/K)],\phi}\Zp(\phi).$
Here  $\Zp(\phi)$  is the  $\Zp[\phi]$-module free of rank  $1$  on which  $\Gal(K(\phi)/K)$  acts by  $\phi$.}
\end{defn}
\begin{cor}\label{Ycor}
We have canonical isomorphisms of  $\T$-modules
\begin{equation*}\begin{split}
&\Sel_\Q(Ad(\Ind_K^\Q\Phi))\cong(Y_\infty^-(\psi^-)\otimes_{\Zp[\psi^-]}W)^\vee,\\
&\Sel_\Q(Ad(\Ind_K^\Q\Phi))^\vee
\cong(\Theta)/(\Theta)^2\cong\T_-/\theta\T_-\cong\T^\text{\rm nCM}/(\Theta)\cong\T^\text{\rm nCM}_+/(\theta),\\
&\T^\text{\rm nCM}_+/(\theta)\wot_W\WG\stackrel{(*)}{\cong}\Lambda_\WG/(\lc^-_p(\psi^-)).
\end{split}
\end{equation*}
\end{cor}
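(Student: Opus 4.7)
The plan is to establish the three successive assertions in turn: (i) the Galois-cohomological identification $\Sel_\Q(Ad(\Ind_K^\Q\Phi))\cong(Y_\infty^-(\psi^-)\otimes_{\Zp[\psi^-]}W)^\vee$ via Shapiro's lemma; (ii) the chain of tangent-cotangent and ring-theoretic isomorphisms, using the explicit presentation from \thmref{TWinvthm}; and (iii) the main-conjecture input needed to pin down the Katz $p$-adic $L$-function.

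For (i), I would first decompose the adjoint as a $G$-module: $Ad(\Ind_K^\Q\Phi)=\mathbf{1}\oplus\chi\oplus\Ind_K^\Q(\Phi^-)$ with $\Phi^-:=\Phi/\Phi^c$. The $\mathbf{1}$-summand contributes nothing under the determinant-fixing deformation condition, and the $\chi$-summand vanishes by (H3) because $p\nmid h_K$ kills the relevant Greenberg $H^1$. Applying Shapiro to the remaining summand gives $H^1(G,\Ind_K^\Q(\Phi^-)^*)\cong H^1(H,(\Phi^-)^*)$. Since $p=\pG\pG^*$ splits, the Mackey decomposition yields $\Ind_K^\Q(\Phi^-)\mid_{G_p}\cong\Phi^-\mid_{G_\pG}\oplus(\Phi^-)^{-1}\mid_{G_\pG}$; the ordinary filtration $F_+$ corresponds to the first summand, while $Ad/F_+$ exposes the second. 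Translating the Greenberg local conditions through Shapiro produces exactly the combination: unramified at $\pG$ (from $F_+$), totally split at $\pG^*$ (from $Ad/F_+$), and unramified at all $l\nmid p$ (minimality). These are the defining conditions of the extension cut out by $Y_\infty^-$; extracting the $\psi^-$-branch (using $\Phi^-\equiv\psi^-\pmod{\mG_\Lambda}$) and extending scalars to $W$ yields the stated Pontryagin-dual identification.

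For (ii), by Mazur's deformation theory combined with $R=\T$, the dual Selmer group $\Sel_\Q(Ad(\Ind_K^\Q\Phi))^\vee$ is identified with the relative cotangent space $(\Theta)/(\Theta)^2$ of the surjection $\T\twoheadrightarrow\T^\text{CM}=\T/(\Theta)$. The remaining identifications are ring-theoretic consequences of $\T\cong\Lambda[[T_-]]/(T_-S_+)$. From $\Theta\cdot S_+=0$ and the coprimality of $T_-$ and $S_+$ one reads $\mathrm{Ann}_\T(\Theta)=(S_+)$, hence $(\Theta)\cong\T/(S_+)=\T^\text{nCM}$ as $\T$-modules; tensoring with $\T/(\Theta)$ gives $(\Theta)/(\Theta)^2\cong\T^\text{nCM}/(\Theta)$. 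By \thmref{TWinvthm}(3), $\T_-=\T^\text{nCM}_-=\T^\text{nCM}_+\cdot\Theta$ is free of rank one over $\T^\text{nCM}_+$, so $\T_-/\theta\T_-\cong\T^\text{nCM}_+/(\theta)$. Finally, the decomposition $\T^\text{nCM}=\T^\text{nCM}_+\oplus\T^\text{nCM}_+\cdot\Theta$ collapses modulo $(\Theta)$ to $\T^\text{nCM}_+/(\Theta^2)=\T^\text{nCM}_+/(\theta)$, closing the chain.

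For (iii), combining (i) and (ii) identifies $\T^\text{nCM}_+/(\theta)\wot_W\WG$ canonically with $Y_\infty^-(\psi^-)\otimes_{\Zp[\psi^-]}\WG$, and the comparison with $\Lambda_\WG/(\lc_p^-(\psi^-))$ is the anticyclotomic main conjecture for $K$ with branch character $\psi^-$: equality of characteristic ideals is Rubin's theorem (together with its CM refinements due to Hida--Tilouine in our generality), and the upgrade from characteristic-ideal equality to an honest cyclic-module presentation follows because $\T^\text{nCM}_+/(\theta)$ is visibly a cyclic $\Lambda$-module via \corref{Lambdavep}, which forces all pseudo-null pieces to vanish under (H0--H4). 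The main obstacle is step (i), specifically the careful matching of the $F_+/(Ad/F_+)$ dichotomy at $p$ with the asymmetric ``unramified at $\pG$, totally split at $\pG^*$'' definition of $Y_\infty^-$: this requires tracking which side of the Mackey decomposition carries $F_+$, a choice governed by the ordering of the ordinary characters $\epsilon,\delta$ together with the action of complex conjugation swapping $\pG\leftrightarrow\pG^*$. In (iii) the delicate point is not the divisibility half of the main conjecture but the passage to an isomorphism, which would fail without the cyclicity output of \corref{Lambdavep}.
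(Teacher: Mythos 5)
Your proposal follows the same route as the paper: decompose the adjoint, identify the induced summand with the anticyclotomic Iwasawa module $Y_\infty^-(\psi^-)$, pass through the congruence/cotangent module to the explicit presentation of \thmref{TWinvthm}, and finish with the anticyclotomic main conjecture plus cyclicity. The paper simply cites \cite[Thm.~5.33]{hida-HMF} for step (i) and \cite[Thm.~B]{hida-cyclicity} for the cyclicity feeding into steps (ii) and (iii), whereas you unfold these references; your ring-theoretic chain in step (ii) ($\mathrm{Ann}_\T(\Theta)=(S_+)$, hence $(\Theta)\cong\T^\text{\rm nCM}$ as $\T$-modules, etc.) is correct and matches what \thmref{TWinvthm} supplies. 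The one place to flag is a slip in the translation of the $p$-local Greenberg condition under Shapiro: you write ``unramified at $\pG$ (from $F_+$),'' but by \defnref{Yc}, $Y_\infty^-$ is the Galois group of an extension unramified \emph{outside} $\pG$ (and totally split at $\pG^*$), i.e.\ ramification is \emph{allowed} at $\pG$. The correct dictionary is that the $\pG$-branch of $\Ind_K^\Q(\Phi^-)|_{G_p}$ is entirely absorbed by $F_+$, so the Greenberg condition there is vacuous, which dually permits $\pG$-ramification on the Iwasawa side; the $\pG^*$-branch, sitting in $Ad/F_+$, is what forces the ``totally split at $\pG^*$'' constraint. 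With that correction your argument lines up with the paper's.
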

\begin{proof}
By the decomposition  $Ad(\Ind_K^\Q\Phi)\cong\chi\oplus\Ind_K^\Q\psi^-$  for  $\chi=\left(\frac{K/\Q}{}\right)$
combined with the functoriality of Greenberg's Selmer group,
we have  $\Sel_\Q(Ad(\Ind_K^\Q\Phi))\cong\Sel_\Q(\chi)\oplus\Sel_\Q(\Ind_K^\Q\psi^-)$.
The first isomorphism is \cite[Thm.~5.33]{hida-HMF},
where we get   $\Sel_\Q(\Ind_K^\Q\psi^-)=Y_\infty^-(\psi^-)^\vee$.  
Note that $\Sel_\Q(\chi)$  vanishes by  $p\nmid h_K$.
The second follows from cyclicity over  $\Lambda$  
proven in \cite[Thm.~B]{hida-cyclicity} and \thmref{TWinvthm}.
The third identity ($*$) follows from the proof of the anticyclotomic main conjecture shown by Rubin and Mazur--Tilouine:  
$\cha_{\Lambda_\WG}(Y_\infty^-(\psi^-))=(\lc^-_p(\psi^-))$ (see \cite{rubin1991}, \cite{rubin1994}, \cite{tilouine1989}, \cite{MT})
combined with the first two identities.
\end{proof}

\subsection{Modular Cayley--Hamilton representations}\label{modrep}
We introduce representations 
with values in a generalized matrix algebra (GMA) as in \cite{BC2009}, \cite{chen2014}  and \cite{WE2018}.
We refer to \cite[\S5.9]{WWE1} for the notion of ordinarity over  $\Q$  for GMA representations (not treated in \cite{BC2009} and \cite{chen2014}).
Since we have two conjugacy classes of $p$-decomposition groups  $D_\pG$  and  $D_{\pG^*}$,
we modify the definition (see below) of ordinarity depending on each factor  $\pG$  and  $\pG^*$.     
We follow  \cite[\S1.3]{BC2009}  to define a GMA $A$-algebra  $E$. 
Let $A$  be a commutative ring and $E$ an A-algebra. 
We say
that $E$ is a generalized matrix algebra (GMA) of type $(d_1,\dots , d_r)$ if $R$ is equipped
with:
\begin{itemize}
  \item  a family orthogonal idempotents $\Ec=\{e_1,\dots , e_r\}$  with  $\sum_ie_i=1$,
\item for each $i$, an $A$-algebra isomorphism  $\psi_i : e_iEe_i \xra{\sim} M_{d_i}(A)$,
such that the trace map $T : R \ra A$, defined by $T(x) :=\sum_i\Tr(\psi_i(e_ixe_i))$
satisfies $T(xy) = T(yx)$ for all $x, y \in E$. 
We call $\Ec = \{e_i,  \psi_i, i = 1, \dots , r\}$ the
data of idempotents of $E$.
\end{itemize}
In this appendix,
we assume that  $r=2$  and  $d_1=d_2=1$;
so, we can forget about  $\psi_i$  as an $A$-algebra automorphism of  $A$  is unique.
Once we have  $\Ec$,
we identify  $e_iEe_i=A$  and put  $B=e_1Ee_2$  and  $C=e_2Ee_1$.
Then a generalized matrix algebra over 
$A$ is a pair of an associative $A$-algebra  $E$  and  $\Ec$.
It is isomorphic to  $A\oplus B\oplus C\oplus A$  as  $A$-modules; so,
we write instead  $(E,\Ec)=\left(\begin{smallmatrix}A&B\\
C&A\end{smallmatrix}\right)$
which we call a GMA structure. 
There is an A-linear map $B\otimes_A C \ra A$ such that the multiplication in  $E$ is given by 2-by-2
matrix product. In this case, $A$ is called the scalar subring of $(E,\Ec)$ and $(E,\Ec)$ is called an $A$-GMA.
A Cayley--Hamilton representation with coefficients in $A$ and residual representation  
$\left(\begin{smallmatrix}\ov\psi&0\\0&\ov\psi^c\end{smallmatrix}\right)$  
(with this order  $\ov\psi$  at the top)
is a homomorphism
$\rho : H \ra E^\times$, such that $(E,\Ec)$ is an $A$-GMA, and such that in matrix coordinates, $\rho$ is given by
$
\s\mapsto
\left(\begin{smallmatrix}\rho^\Ec_{11}(\s)&\rho^\Ec_{12}(\s)\\
\rho^\Ec_{21}(\s)&\rho^\Ec_{22}(\s)\end{smallmatrix}\right)
$
with  $(\rho_{11}(\s)\ \modulo\mG_A) = \ov\psi(\s)$, $(\rho_{22}(\s)\ \modulo\mG_A) = \ov\psi^c(\s)$, 
and  $\rho_{12}(\s)\rho_{21}(\s) \equiv 0\mod\mG_A$. 
For a given  $\rho$,
if we change the set $\Ec$  of idempotents,
the matrix expression changes; so, we added the superscript  $\Ec$  to the matrix entries  $\rho_{ij}^\Ec$
to indicate its dependence on  $\Ec$.
If the input of  $\Ec$  is clear from the context,
we omit the superscript  $\Ec$.

In  $H$,
we have two conjugacy classes of the $p$-decomposition groups depending on prime factors of  $p$  in  $K$.
Fix a decomposition subgroup  $D_\pG\subset H$  for  $\pG$  and put  $D_{\pG^*}$  for  $\pG^*$.
We define $\pG$-ordinarity (resp.  $\pG^*$-ordinarity) 
of  $\rho$  to have  $\Ec$  (resp.  $\Ec^*$)  such that  $\rho^\Ec_{12}(\s)=0$  for all  $\s\in D_\pG$  
and  $\rho^\Ec_{22}(I_\pG)=1$  (resp. $\rho^{\Ec^*}_{21}(\s)=0$  for all  $\s\in D_{\pG^*}$  and  $\rho^{\Ec^*}_{11}(I_{\pG^*})=1$).
We say  $\rho$  is {\em ordinary} if it is $\pG$  and  $\pG^*$-ordinary at the same time.
This definition does not depends on the choice of  $D_\pG$  and  $D_{\pG^*}$.
For example, if we replace  $D_\pG$  by  $\s D_\pG\s^{-1}$,
$(E,\rho(\s)\Ec\rho(\s)^{-1})$  satisfies the required conditions.

If  $(E,\Ec)$  can be embedded into the matrix algebra  $M_2(\wt A)$  for a complete local $W$-algebra  $\wt A$ with residue field  $\fb$
containing  $A$,
the Cayley--Hamilton representation  $\rho:H\ra E^\times$  can be regarded as a representation into  $\GL_2(\wt A)$.
Since  $\orho=\Ind_K^\Q\ov\psi$  is irreducible over  $G$,
we may have an extension  $\wt\rho$  of the GMA representation  $\rho$  to  $G$.
If an extension  $\wt\rho$  exists, 
the extension is a usual representation into  $\GL_2(\wt A)$.
As usual, we call  $\wt\rho$  $p$-ordinary if $\wt\rho|_{G_p}\cong\left(\begin{smallmatrix}\epsilon&*\\0&\delta\end{smallmatrix}\right)$
with unramified $\delta\equiv\psi^c\mod\mG_{\wt A}$.  
The ordering of the residual representation  $\left(\begin{smallmatrix}\ov\psi&0\\0&\ov\psi^c\end{smallmatrix}\right)$  
(with this order  $\ov\psi$  at the top)  is fixed; so, plainly, to have compatibility of ordinarity of  $\rho$ over  $H$
and $\Q$-ordinarity of  $\wt\rho$ (and
to preserve residual order of the characters  $\ov\psi$  and  $\ov\psi^c$),
we need to define  $\pG^*$-ordinarity to have a set of idempotent  $\Ec^*$  so that  $\rho^{\Ec^*}|_{D_\pG^*}$
in the lower triangular form.
Indeed, if  $\wt\rho(c)=\left(\begin{smallmatrix}0&1\\ 1&0\end{smallmatrix}\right)$,
$\rho$  is  $\pG$-ordinary for  $\Ec$  if and only if
$\rho$  is  $\pG^*$-ordinary for the same  $\Ec$  by choosing  $D_{\pG^*}=cD_\pG c^{-1}$.
As we describe in the following proposition,
this phenomenon occurs if we take  $\rho:=\rho_\T|_H$  for  $A=\T_+$  and  $\wt A=\T$.
Details of the deformation theory of  $\orho$  in the category of representations over  $G$  and in the category of Cayley--Hamilton representations
over  $H$  will be discussed in a forthcoming paper \cite{hida2019-ord-real}.

\begin{prop}\label{+rep}
The Galois representation  $\rhob=\rho_{\T}|_H$  associated to  $\T$   
restricted to  $H$  is an ordinary Cayley-Hamilton representation with values in the following $\T_+$-GMA  
$$(\E,\Ec=\Ec^*)=\left(\begin{smallmatrix}\T_+&B_+\\C_+&\T_+\end{smallmatrix}\right)
\cong\left(\begin{smallmatrix}\T_+&\T_-\\\T_-&\T_+\end{smallmatrix}\right)\
\text{ with  $B_+\otimes_{\T_+}C_+\cong\T_-\otimes_{\T_+}\T_-\ra\T_+$}$$
given by  $\Theta b\otimes\Theta c\mapsto\theta bc$  for  $\theta=\Theta^2$ (the product in  $\T$).
\end{prop}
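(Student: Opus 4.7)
The plan is to exploit the involution $\sigma$ on $\T$ --- which corresponds to twisting the $G$-representation by $\chi=\left(\frac{K/\Q}{}\right)$ --- to pin down the matrix structure of $\rho_\T|_H$. Since $\ov\rho=\Ind_K^\Q\ov\psi$ is absolutely irreducible and $\rho_\T^\sigma\cong\chi\otimes\rho_\T$, Schur's lemma yields a unique-up-to-scalar intertwiner $J\in\GL_2(\T)$ with $\sigma(\rho_\T(g))=\chi(g)J\rho_\T(g)J^{-1}$ for all $g\in G$. Specializing at $g=c$ (using $c^2=1$ and $\chi(c)=-1$) normalizes $J$ to $J=\wt\rho(c)$ with $J^2=I$. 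Since $p\ge5$, $J$ is diagonalizable over $\T$ with eigenvalues $\pm1$.

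I would work in the basis of $\T^2$ in which $J=\left(\begin{smallmatrix}1&0\\0&-1\end{smallmatrix}\right)$. For $g\in H$ (so $\chi(g)=1$), the identity $\sigma(\rho_\T(g))=J\rho_\T(g)J^{-1}$ says that $\sigma$ fixes the diagonal entries and negates the off-diagonal entries of $\rho_\T(g)$. Hence $\rho_\T(H)$ lies in the $\T_+$-submodule $E\subset M_2(\T)$ of matrices with $\T_+$-diagonal and $\T_-$-off-diagonal. Because $\T_-\cdot\T_-\subseteq\T_+$ and $\T_+\cdot\T_-\subseteq\T_-$ (by multiplicativity of $\sigma$), $E$ is closed under matrix multiplication, hence is a $\T_+$-subalgebra. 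With the standard idempotents $\Ec=\{\left(\begin{smallmatrix}1&0\\0&0\end{smallmatrix}\right),\left(\begin{smallmatrix}0&0\\0&1\end{smallmatrix}\right)\}$, this is exactly the $\T_+$-GMA $\left(\begin{smallmatrix}\T_+&\T_-\\\T_-&\T_+\end{smallmatrix}\right)$ claimed in the proposition. The off-diagonal pairing $B_+\otimes_{\T_+}C_+\to\T_+$ is plain matrix multiplication, restricting to multiplication in $\T$; combined with $\T_-^{\text{\rm nCM}}=\Theta\cdot\T_+^{\text{\rm nCM}}$ from \thmref{TWinvthm}(3) (and $\T_-\subseteq\T^{\text{\rm nCM}}$ since $\sigma$ is trivial on $\T^{\text{\rm CM}}$), this yields the explicit formula $\Theta b\otimes\Theta c\mapsto\theta bc$.

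The main obstacle is verifying ordinarity with a common $\Ec=\Ec^*$. The $p$-ordinary filtration of $\rho_\T$ is most directly visible in the basis of $p$-ordinary lines $\{v_{\pG^*},v_\pG\}$, where $\wt\rho(c)$ takes the anti-diagonal form $\left(\begin{smallmatrix}0&1\\1&0\end{smallmatrix}\right)$; as the discussion preceding the proposition notes, in that basis the anti-diagonal form of $\wt\rho(c)$ converts lower triangularity on $D_\pG$ into upper triangularity on $D_{\pG^*}=cD_\pG c^{-1}$, so a single $\Ec$ simultaneously witnesses $\pG$- and $\pG^*$-ordinarity. I propose to establish ordinarity in that basis and transport to the $J$-diagonal basis via the change of basis $U=\left(\begin{smallmatrix}1&1\\1&-1\end{smallmatrix}\right)$ (which intertwines the two forms of $J$). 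The delicate point is reconciling the two natural bases --- one exhibiting the $\T_+$-GMA structure transparently, the other exhibiting simultaneous ordinarity --- into one set of idempotents in $E$ that plays both roles; this can be accomplished by observing that the trace and determinant of $\rho_\T|_H$ already take values in $\T_+$ (being $\sigma$-invariant), so the Cayley--Hamilton universal algebra for this pseudocharacter is intrinsically a $\T_+$-GMA, independent of matrix realization, and the ordinary filtration at $\pG$ (resp.\ $\pG^*$) yields abstract idempotents in this algebra that must coincide under the compatibility forced by $c$-conjugation.
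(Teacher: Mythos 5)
Your high-level idea -- using the involution $\s$ and a diagonal intertwiner to force $\rho_\T(H)$ into the block structure $\left(\begin{smallmatrix}\T_+&\T_-\\\T_-&\T_+\end{smallmatrix}\right)$ -- is the right one, and your observation that $\Tr\rho_\T|_H$ and $\det\rho_\T|_H$ take values in $\T_+$ is correct. But the normalization $J=\wt\rho(c)$ is an error, and it is not a cosmetic one. The intertwiner for $\rho_\T^\s\cong\rho_\T\otimes\chi$ reduces modulo $\mG_\T$ to $j=\left(\begin{smallmatrix}1&0\\0&-1\end{smallmatrix}\right)$ (one checks $\ov\rho\otimes\chi=j\ov\rho j^{-1}$ directly in the standard induced-representation basis), whereas $\wt\rho(c)$ reduces to the anti-diagonal matrix $\left(\begin{smallmatrix}0&1\\1&0\end{smallmatrix}\right)$; these are different, so $J$ cannot be normalized to $\wt\rho(c)$, and your ``$J^2=I$'' has no source. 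Worse, if you diagonalize $\rho_\T(c)$, then $\rho_\T(\tau)$ for $\tau\in\Delta\cap H$ is no longer diagonal (it does not commute with $\rho_\T(c)$), so the $\T_+$-GMA structure does not appear in those coordinates -- the idempotents must come from the eigenprojections of $\rho_\T(\tau)$ with $\psi(\tau)\ne\psi^c(\tau)$, not from $\rho_\T(c)$. Relatedly, the tension you perceive between ``$J$ diagonal'' and ``$\wt\rho(c)$ anti-diagonal'' is illusory: in the induced-representation basis both hold simultaneously, so the basis change by $U=\left(\begin{smallmatrix}1&1\\1&-1\end{smallmatrix}\right)$ is not only unnecessary, it swaps exactly the two properties you need to hold at once.

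Even with $j$ correctly identified, there is a genuine lifting problem your sketch skips. A priori the intertwiner $J$ for $\rho_\T^\s\cong\rho_\T\otimes\chi$ lies only in $j\cdot(1+\Theta M_2(\T))$; Schur's lemma (really: absolute irreducibility of $\ov\rho$ making the centralizer scalar) gives uniqueness of $J$ up to scalar, but does not let you take $J=j$. The paper's proof is devoted to exactly this point: writing $U=uj$ with $u\in 1+\Theta M_2(\T)$, it shows $U^\s U$ is scalar, then uses a Hilbert-90-type vanishing for the $p$-profinite group $(1+\Theta M_2(\T))/Z$ (with $Z=1+\Theta\T$ and $p>2$) under the twisted involution $x\mapsto jx^\s j$ to write $u=v^{\wt\s-1}$, and then replaces $\rho_\T|_H$ by the strictly equivalent $\rhob=v^{-1}j\rho_\T jv|_H$ satisfying $j\rhob j^{-1}=\rhob^\s$ on the nose. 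Nothing in your proposal does this work. Finally, the closing appeal to an ``intrinsic $\T_+$-GMA'' with abstract idempotents is a plan rather than a proof of simultaneous $\pG$- and $\pG^*$-ordinarity with a single $\Ec$; the paper instead exhibits $e_1,e_2$ explicitly from $\rho_\T(\tau)$, derives $\pG$-ordinarity from $p$-ordinarity of $\rho_\T$ on $D_\pG$, and then uses $\rho_\T(c)e_1\rho_\T(c)=e_2$ together with the fixed residual ordering $\left(\begin{smallmatrix}\ov\psi&0\\0&\ov\psi^c\end{smallmatrix}\right)$ to force $\Ec=\Ec^*$.
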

\begin{proof}
Recall  $\T_-:=\{x\in\T|\s(x)=-x\}$.
Then  $\T_-=\Theta\T_+$, and  $\Theta\in\T^\text{\rm nCM}$  under the inclusion $\T\hra\T^\text{\rm CM}\oplus\T^\text{\rm nCM}$; 
so, $\Theta$  is a zero-divisor in  $\T$  but is not a zero-divisor in $\T^\text{\rm nCM}$.
Similarly  $\theta\in\T_+^\text{\rm nCM}$.
Extend the character  $\ov\psi$  to a function on  $G$  just by  $0$  outside  $H$,
and decompose  $G=H\sqcup c H$.
Then we have the following standard realization of the induced representation:
$$\orho(\tau)=\left(\begin{smallmatrix}\ov\psi(\tau)&\ov\psi(\tau c)\\
\ov\psi(c^{-1}\tau)&\ov\psi(c^{-1}\tau c)\end{smallmatrix}\right).$$
Then if  $\chi(\tau)=-1$  ($\Leftrightarrow\tau\not\in H$),
we have  
$$(\orho\otimes\chi)(\tau)=\left(\begin{smallmatrix}0&-\ov\psi(\tau c)\\
-\ov\psi(c^{-1}\tau)&0\end{smallmatrix}\right)=j\orho(\tau)j^{-1}$$
for  $j:=\left(\begin{smallmatrix}1&0\\
0&-1\end{smallmatrix}\right)$.
If  $\chi(\tau)=1$  ($\Leftrightarrow\tau\not\in G_K$),
$\orho(\tau)$  is diagonal commuting with  $j$; so,
$$(\orho\otimes\chi)(\tau)=\orho(\tau)=j\orho(\tau)j^{-1}.$$
Thus we conclude  $\orho\otimes\chi=j\orho j^{-1}$.

The deformation functor represented by  $\T$  is given by:
$$D(A):=\{\rho:G\ra \GL_2(A):\text{$p$-ordinary }|\ (\rho\ \modulo\mG_A=\orho\}/\approx,$$
where  ``$\approx$"  is the strict equivalence 
(i.e., conjugation by  $1+M_2(\mG_A)$).
Thus we can let  $\chi$  act on  $D$  by  
$$\rho\mapsto j(\rho\otimes\chi)j^{-1}=\rho^\s.$$
Since  $j\left(\begin{smallmatrix}a&b\\c&d\end{smallmatrix}\right)j^{-1}=\left(\begin{smallmatrix}a&-b\\-c&d\end{smallmatrix}\right)$
and  $(\rho_\T|_H\modulo(\Theta))=\Phi\oplus\Phi^c$  is diagonal,
we have  $uj(\rho_\T\otimes\chi)(uj)^{-1}=\rho_\T^\s$  with  $u\in1+\Theta M_2(\T)$.
Write  $U=uj$.
Applying  $\s$,
we get  $U^\s(\rho_\T^\s\otimes\chi)U^{-\s}=\rho_\T$; so, we have
$$U\rho_\T U^{-1}=U(\rho_\T\otimes\chi)U^{-1}\otimes\chi=\rho_\T^\s\otimes\chi=U^{-\s}\rho_\T U^\s.$$
Thus  $ju^\s ju=U^\s U=z\in Z:=1+\Theta\T$.
Since  $1+\Theta M_2(\T)$  is  $p$-profinite, letting  $\s$  act on  $1+\Theta M_2(\T)$  by  $x\mapsto x^{\wt\s}:=jx^\s j$,
we can thus write  $u=v^{\wt\s-1}\in (1+\Theta M_2(\T))/Z$  for  $v\in 1+\Theta M_2(\Theta)$.
Thus replacing  $\rho_\T|_H$  by  $\rhob:=v^{-1}j\rho_\T jv|_H$,
we find  $j\rhob j^{-1}=\rhob^\s$.
In other words,
$\rhob$  has values in  $\E=\left(\begin{smallmatrix}\T_+&\T_-\\\T_-&\T_+\end{smallmatrix}\right)$, as desired

Since  $\psi^-|_{D_\pG}\ne1$ (H0),
we can choose first $\tau\in\Delta$  with  $\psi(\tau)\ne\psi^c(\tau)$  
so that  $\rho_\T(\tau)=\left(\begin{smallmatrix}\psi(\tau)&0\\0&\psi^c(\tau)\end{smallmatrix}\right)$, we can define
the set  $\Ec$  of idempotents of  $\Eb$  having the GMA form as above by
$$e_1=\frac{\rho_\T(\tau)-\psi^c(\tau)}{\psi(\tau)-\psi^c(\tau)}
\ \text{ and }\ e_2=\frac{\rho_\T(\tau)-\psi(\tau)}{\psi^c(\tau)-\psi(\tau)}.$$
Writing  $\Eb=\T_+\oplus B\oplus C\oplus \T_+$  with  $B\cong C\cong\T_-$,
we note that  $B$ (resp.  $C$) is the eigenspace 
under the conjugation action of  $\rho_\T(\tau)$  with eigenvalue  $\psi^-(\tau)$  (resp.  $\psi^-(\tau)^{-1}$).
Thus our expression of  $\rho_\T|_H$  is associated to  $(\Eb,e_1,e_2)$.
By ordinarity of  $\rho_\T$  on  $G_p$  (inducing  $D_\pG$),
we see  $\rho_\T|_H$  is  $\pG$-ordinary.  
Plainly  $c\in G$  interchanges  $e_1$  and  $e_2$; i.e.,
$\rho_\T(c)e_1\rho_\T(c)=e_2$.
Thus over  $D_{\pG^*}=cD_\pG c$,
we conclude  $\rho_\T|_H$  with values in  $(\Eb,\Ec)$  is also  $\pG^*$-ordinary. 
Since the residual representation is exactly  $\left(\begin{smallmatrix}\ov\psi&0\\0&\ov\psi^c\end{smallmatrix}\right)$  
(with this order  $\ov\psi$  at the top),
the choice of  $(e_2,e_1)$  is impossible violating the residual order of the characters
(the definition of  $\pG^*$-ordinarity is lower triangular on  $D_{\pG^*}$  to accommodate to preserve this residual order).
Therefore we need to choose  $\Ec=(e_1,e_2)$  for  $\pG^*$-ordinary.
\end{proof}
Under the normalization as above,
we may and do assume that  $\rho_\T(c)=\left(\begin{smallmatrix}0&1\\1&0\end{smallmatrix}\right)$.

\subsection{Local Iwasawa theory}\label{localIw}
Let  $k/\Qp$ (inside  $\oQ_p$)  is a Galois extension with  $p\nmid[k:\Qp]$.
Write  $F/k$  for the cyclotomic $\Zp$-extension inside  $\oQ_p$.
Let  $\Gamma:=\Gal(F/k)=\gamma^{\Zp}$  and put  $\Gamma_n=\Gamma^{p^n}$.
Set  $F_n:=F^{\Gamma_n}$  with  $p$-adic integer ring  $\oG_n$.
Let  $L$  (resp.  $L_n$)  be the maximal abelian  $p$-extension of  $F$  (resp.  $F_n$).
Write  $X_n:=\Gal(L_n/k_n)$  and  $X:=\Gal(L/F)$.
We have  $\Gal(F/\Qp)=\Gal(F/\Qp)\ltimes X$.
The exact sequence
$$1\ra X\ra\Gal(L/k)\ra\Gamma\ra1$$
is split just by lifting  $\gamma$  to an element  $\wt\gamma\in\Gal(L/k)$  taking splitting image  $\wt\gamma^{\Zp}$.
Therefore the commutator subgroup of  $\Gal(L/k_n)$  is given by  $(\gamma^{p^n}-1)X$,
and we have the corresponding exact sequence at each level  $n$:
$
1\ra X/(\gamma^{p^n}-1)X\ra\Gal(L_nF/F)\ra\Gamma_n\ra1.
$

Let  $k_\infty/k$  be the unramified $\Zp$-extension inside  $\oQ_p$  with its $n$-th layer  $k_n$,
and put  $\fc_n=Fk_n$.
Let  $\lc$  (resp.  $\lc_n$)  be the maximal abelian $p$-extension of  $\fc_\infty$  (resp.  $\fc_n$).
Set  $\Xc:=\Gal(\lc/\fc_\infty)$.
Pick a lift  $\phi\in\Gal(\lc/k)$  of the Frobenius element  $[p,\Qp]^f$  (for the residual degree  $f$  of  $k/\Qp$)  generating  $\Gal(k_\infty F/k)$  
and a lift  $\wt\gamma\in\Gal(\lc/k)$  of the generator $\gamma$  of  $\Gal(k\Q_{p,\infty}/k_0)=\Gamma$.
The commutator  $\tau:=[\phi,\wt\gamma]$, 
acts on  $\Xc$  by conjugation,
and  $(\tau-1)x:=[\tau,x]=\tau x\tau^{-1}x^{-1}$  for  $x\in\Xc$  is uniquely determined independent of the choice of  $\gamma$  and  $\phi$.
Define  $L'\subset\lc$  and  $L'_n\subset\lc_n$  by the fixed field of  $(\tau-1)\Xc$ (i.e., the fixed field of  $\tau$), 
which is independent of the choice of  $\wt\gamma$  and  $\phi$.
Let  $X'=\Gal(L'/\fc_\infty)$  and  $X'_n=\Gal(L'_n/\fc_n)$.   

\begin{prop}\label{Iwprop}
Let the notation and the assumptions be as above.
\begin{enumerate}
  \item 
We have a canonical decomposition 
$$X=\varprojlim_nX_n=\varprojlim_nX/(\gamma^{p^n}-1)X\cong\begin{cases}\Zp[[\Gal(F/\Qp)]]&\text{ if  $\mu_p(k)=\{1\}$,}\\
\Zp[[\Gal(F/\Qp)]]\oplus\Zp(1)&\text{ if  $\mu_p(k)=\mu_p(\oQ_p)$}\end{cases}$$  
as  $\Zp[[\Gal(F/\Qp)]]$-modules.
Thus for each finite dimensional $\Qp$-irreducible abelian representation  $\eta$  
of  $\Gal(k/\Qp)$  with values in  $\GL_{\dim(\eta)}(\Zp)$  of order prime to  $p$,
writing  $X[\eta]$  for the maximal $\eta$-isotypical quotient of  $X$, we have  
$$X[\eta]\cong\begin{cases}W(\kp)[[\Gamma]]&\text{ if $\eta\ne\omega$,}\\
\Zp[[\Gamma]]\oplus\Zp(1)&\text{ if  $\eta=\omega$}\end{cases}$$  
as  $\Gal(F/\Qp)$-modules.
Here  $\kp$  is the residue field of the subalgebra of  $M_{\dim(\eta)}(\Zp)$  generated 
by the values of  $\eta$  over  $\Zp$,  $\omega$  is the Teichm\"uller character 
and  $\s\in\Gal(F/\Qp)$  acts on  $W(\kp)$  via  $\eta$  regarded as having values in  $W(\kp)^\times$.
\item  
The restriction map  $X'\ra X$  induces an isomorphism of  $X'/(\phi-1)X'$  
onto the augmentation ideal of  $\Zp[[\Gal(F/\Q)]]\subset X$.
\item For the character  $\eta:\Gal(k/\Qp)\ra W(\kp)$  in (1),  
the factor  $X'[\eta]$  is a cyclic  $W(\kp)[[\Gamma\times\Upsilon]]$-module
(i.e., it is generated topologically over  $W(\kp)[[\Gamma\times\Upsilon]]$  by one element).
\end{enumerate}
\end{prop}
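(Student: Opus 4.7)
My plan is to prove (1) by local class field theory applied levelwise to the tower $\{F_n/k\}$, prove (2) by a direct Galois-theoretic argument inside $\Gal(\lc/k)$, and deduce (3) from (1) and (2) via topological Nakayama.

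For (1), the Artin reciprocity map identifies $X_n$ with the pro-$p$ completion of $F_n^\times$, and passing to the inverse limit gives $X \cong \varprojlim_n (F_n^\times \otimes_\Z \Zp)$ as a $\Zp[[\Gal(F/\Qp)]]$-module. I would decompose $F_n^\times \cong \pi^\Z \times \mu(F_n) \times U^{(1)}(F_n)$ and analyze each factor separately. The uniformizer contributes a trivial $\Zp$; the $p$-adic logarithm identifies the principal units in the limit with $\varprojlim_n \ogg_{F_n}$, which by a normal-basis theorem for the tower $F/\Qp$ (using $p \nmid [k:\Qp]$) is free of rank one over $\Zp[[\Gal(F/\Qp)]]$, absorbing the uniformizer piece; the torsion $\mu_{p^\infty}(F)$ equals $\mu_{p^\infty}$ if $\mu_p \subset k$ (since $F/k$ is the cyclotomic $\Zp$-extension) and is trivial otherwise, producing the $\Zp(1)$ summand exactly in the former case. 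The $\eta$-isotypic statement then follows by applying the idempotent $e_\eta \in \Zp[\Gal(k/\Qp)]$, noting that the Tate module $\Zp(1)$ has $\eta$-weight exactly $\omega$.

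For (2), I would use the sequence $1 \to \Xc \to \Gal(\lc/k) \to \Gal(\fc_\infty/k) \to 1$ and the abelianness of $\Gal(\fc_\infty/k)$ to conclude $\tau=[\phi,\wt\gamma]\in\Xc$, so that $L'$ is the fixed field of $(\tau-1)\Xc$ by construction. Conjugation by (a lift of) $\phi$ defines the $\Upsilon$-action on $X'$, and a Hochschild--Serre argument applied to $F \subset \fc_\infty \subset L'$ together with bookkeeping of the commutator with $\phi$ identifies the kernel of the restriction $X'\to X$ with $(\phi-1)X'$ (co-invariants under $\Upsilon$) and the image with the claimed augmentation ideal of $\Zp[[\Gal(F/\Qp)]] \subset X$ described in (1).

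For (3), I would apply topological Nakayama over the compact local ring $W(\kp)[[\Gamma\times\Upsilon]]$: the module $X'[\eta]$ is cyclic if and only if $X'[\eta]/(\phi-1)X'[\eta]$ is cyclic over $W(\kp)[[\Gamma]]$. By (2) this quotient is the $\eta$-isotypic component of the augmentation ideal from (1), which is all of $W(\kp)[[\Gamma]]$ when $\eta\neq 1$ and the principal ideal $(\gamma-1)\Zp[[\Gamma]]$ when $\eta=1$ -- cyclic in either case. The main obstacle I anticipate lies in part (1): establishing honest rank-one freeness of $\varprojlim_n \ogg_{F_n}$ over $\Zp[[\Gal(F/\Qp)]]$ through the wildly ramified cyclotomic direction (not merely up to pseudo-isomorphism, which is what one gets from Iwasawa theory without further work), and in part (2) pinning down the image of the restriction as precisely the augmentation ideal rather than a slightly larger or smaller submodule.
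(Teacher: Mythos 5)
Your parts (2) and (3) are in the spirit of the paper: (3) is literally the same argument (topological Nakayama applied to the $\eta$-component of the augmentation ideal coming from (2)); (2) is vaguer than the paper's treatment, which splits $\wh{k_n^\times}\cong\Xc_n\oplus\Zp$ via the normalized valuation, tracks the disappearing $\Zp(1)$ in the limit by noting that restriction multiplies $\Zp(1)$ by $p^{m-n}$, and identifies the image with the augmentation ideal from the lost $\Zp$-quotient — you would need to flesh out your Hochschild--Serre/commutator bookkeeping to match this precision, but the strategy is compatible.

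The genuine gap is in (1), and it is the one you flagged but did not resolve — and the particular route you sketch to close it does not work as stated. First, the decomposition $F_n^\times\cong\pi^\Zb\times\mu(F_n)\times U^{(1)}(F_n)$ is not $\Gal(F_n/\Qp)$-equivariant: in the wildly ramified cyclotomic tower one cannot choose a Galois-stable uniformizer, so the "trivial $\Zp$" summand does not carry the Galois module structure you need (the paper instead peels off the $\Zp$ only at the level of $\wh{k_n^\times}$, where $e^{-1}\Zp$ is at least a $\Gal(k/\Qp)$-stable line under the valuation map). Second, the claim that $\varprojlim_n\ogg_{F_n}$ (under trace, matching the norm on units via class field theory) is free of rank one over $\Zp[[\Gal(F/\Qp)]]$ is false on its face: because the different of $F_n/F_{n-1}$ grows without bound, the trace map has image in increasingly deep powers of the maximal ideal and the naive inverse limit of the $\ogg_{F_n}$ collapses. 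Relatedly, the $p$-adic logarithm does not commute with the norm/trace transition maps in the way your sketch requires. There is no normal integral basis in a wildly ramified tower, so "a normal-basis theorem for the tower" is not available as an input.

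The paper sidesteps all of this by taking Iwasawa's structure theorem (\cite[Thm.~25]{iwasawa1973}) as a black box for the $\Zp[[\Gamma]]$-module structure of $X$ (free of rank $[k:\Qp]$, plus $\Zp(1)$ when $\mu_p\subset k$), and then determines the remaining $\Gal(k/\Qp)$-action by a much easier argument: since $p\nmid[k:\Qp]$, the $\Gal(k/\Qp)$-action on the free quotient $Y$ is determined by its action on the $\Gamma$-coinvariants $Y_0=Y/(\gamma-1)Y$, and there the semisimplicity of $\Qp[\Gal(k/\Qp)]$ plus a finite-level class field theory computation forces $Y_0\cong\Zp[\Gal(k/\Qp)]$. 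In other words, the hard Iwasawa-theoretic input (honest freeness, not just pseudo-isomorphism) is cited rather than reproved, and all the new work is done at level zero where $p\nmid[k:\Qp]$ makes things semisimple. If you want to keep your level-by-level route, you would essentially be re-deriving Iwasawa's theorem, and the Coleman-map machinery (not a normal basis argument) is the standard way to control $\varprojlim U^{(1)}(F_n)$.
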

Note that the subalgebra of   $M_{\dim(\eta)}(\Zp)$  generated by the values of  $\eta$  over  $\Zp$
is isomorphic to the Witt vector ring  $W(\kp)$  with coefficients in its residue field  $\kp$.
\begin{proof}
We first prove the assertion (1).
The statement of \cite[Thm.~25]{iwasawa1973}
asserts  $X\cong\Zp[[\Gamma]]^{[k:\Qp]}$  or  $\Zp[[\Gamma]]^{[k:\Qp]}\oplus\Zp(1)$  as  $\Zp[[\Gamma]]$-modules.
Write  $Y$  be the maximal $\Zp[[\Gamma]]$-free quotient of  $X$.
Since  $\Gal(k/\Qp)$  has order prime to  $p$,  $\Gal(K/\Qp)\cong\Gal(k/\Qp)\ltimes\Gamma$, and
its action on  $Y$  is determined by its action on  $Y_0=Y/(\gamma-1)Y$.
We need to show  $Y_0\cong\Zp[\Gal(k/\Qp)]$  as  $\Gal(k/\Qp)$-modules
(which implies  $Y\cong Y_0[[\Gamma]]\cong \Zp[[\Gal(K/\Qp]]$).
Let  $\Q_{p,\infty}\subset\Qp[\mu_{p^\infty}]$  be the cyclotomic $\Zp$-extension.
Writing  $\wh M:=\varprojlim_nM/p^nM$  for a module  $M$,
by class field theory,  $\Gal(L_0K/K)$  fits into the following commutative diagram with exact rows and surjective vertical maps:
$$\begin{CD}
\Gal(L_0K/K)@>\hra>> \wh{k^\times}@>{N_{k/\Qp}}>>\wh{\Qp^\times}\\
@V{||}VV @V\text{Artin rec.}VV @V{a}VV\\
\Gal(L_0K/K)@>>\hra>\Gal(L_0/k)@>\twra>{\Res}>\Gal(\Q_{p,\infty}/\Qp),
\end{CD}$$
where the composite  $a\circ N_{k/\Qp}$  for the norm map  $N_{k/\Qp}$  has image  $\Gal(\Q_{p,\infty}/\Qp)\cong 1+p\Zp\cong\Gamma$.  

First suppose that  $\mu_p(k)=\{1\}$.
Then   $\wh{k^\times}$  is torsion-free.
The isomorphism class of a torsion-free  $\Zp[\Gal(k/\Qp)]$-module  $M$   of finite rank over  $\Zp$
is determined by the $\Qp[\Gal(k/\Qp)]$-module  $M\otimes_{\Zp}\Qp$.
Since  $\Qp[\Gal(k/\Qp)]$  is semi-simple, we conclude  $\wh{k^\times}\cong\Zp[\Gal(k/\Qp)]\oplus\Gamma$  
with  $\Gal(k/\Qp)$  acting on $\Gamma$ trivially.  
Thus we conclude
$Y_0\cong\Zp[\Gal(k/\Qp)]$  in which the $\eta$-isotypical component has rank  $\dim(\eta)=\rank_{\Zp}W(\kp)$  over  $\Zp$.

Now assume that  $\mu_p(k)$  is non-trivial.
Since  $p\nmid[k:\Qp]$,
$\mu_{p^\infty}(k)=\mu_p(k)$; so,
the torsion part of  $\wh{k^\times}$  is cyclic of order  $p$.
Let  $\wh{k^\times_f}$  be the maximal torsion-free quotient of  $\wh{k^\times}$.
Then by the same argument as in the case where  $\mu_p(k)=\{1\}$,
we find  $\wh{k^\times_f}\cong\Zp[\Gal(k/\Qp)]\oplus\Gamma$  as  $\Zp[\Gal(k/\Qp)]$-modules.
By Iwasawa's expression,  $X/(\gamma-1)X\cong\Zp^{[k:\Qp]}\oplus\mu_p(k)$
in which  $\mu_p(k)$  is identified with  $\Zp(1)/(\gamma-1)\Zp(1)$.
Again we have  $(X/(\gamma-1)X)/\mu_p(k)\cong\Zp[\Gal(k/\Qp)]$  as $\Zp[\Gal(k/\Qp)]$-modules.
We have a commutative diagram with exact row
$$\begin{CD}\Zp(1)/(\gamma-1)\Zp(1)@>\hra>>X/(\gamma-1)X@>\twra>>Y/(\gamma-1)Y\\
@V{\wr}VV @V{||}VV @VVV\\
\mu_p(k)@>>\hra> X/(\gamma-1)X@>>\twra>\Zp[\Gal(k/\Qp)]\end{CD}$$
of  $\Zp[\Gal(k/\Qp)]$-modules.
This shows  $Y_0=Y/(\gamma-1)Y\cong\Zp[\Gal(k/\Qp)]$  as  $\Zp[\Gal(k/\Qp)]$-modules, and hence
$Y\cong\Zp[[\Gal(F/\Qp)]]$.
Therefore the surjective $\Zp[[\Gal(F/\Qp)]]$-morphism  $X\twra Y$  splits,
and hence  $X\cong\Zp(1)\oplus\Zp[[\Gal(K/\Qp)]]$ as desired.

Now we prove (2).
Let  $k_\infty/k_n/k_0$  be the intermediate $n$-th layer of the unramified  $\Zp$-extension of  $k_0$  (so,  $\Gal(k_n/k_0)\cong\Z/p^n\Z$)).
Recall the integer ring  $\oG_n$  of  $k_n$.
Let  $\Xc_n=\Gal(\lc_n/\fc_n)$.
Then we have an exact sequence of  $\Zp[\Gal(k_n/\Q)]$-modules
$$\begin{CD}
\wh{\oG_n^\times}@>\hra>>\wh{k^\times_n}@>\twra>v>\Zp\\
@V{\wr}VV @V{\wr}VV @V{\wr}VV\\
\Xc_n@>>\hra>\Gal(\lc_n/k_n)@>>\twra>\Gal(k_\infty/k_n)
\end{CD}$$
where the map  $v$  is induced from the valuation  $\ord_p$  of  $k$  normalized so that  $\ord_p(p)=1$.
Writing  $\vpi$  for a prime element in  $\oG_n$,
we have  $v(\vpi)=e^{-1}$.
Then this exact sequence is split by  $v(p^{\Zp})=\Zp=e^{-1}\Zp=v(\vpi^{\Zp})$; so,
$\wh{k_n^\times}\cong \Xc_n\oplus\Zp$  as  $\Zp[\Gal(k_n/\Qp)]$-modules.
By this diagram and  $L'_n\supset k_\infty$,
we still have  $\Gal(L'_n/k_n)=X'_n\oplus\Gal(k_\infty/k_n)$  with  $\Gal(k_\infty/k_n)\cong\Zp$.

By the same argument as in the case proving (1), if  $\mu_p(k)=\mu_p(\oQ_p)$,
we have  
$\Xc_n\cong \Yc_n\oplus\Zp(1)$  as  $\Zp[[\Gal(k_n\Q_{p,\infty}/\Qp)]]$--modules for a unique direct summand  $\Yc_n$.
On  $\Zp(1)$,  $\phi$  acts trivially  (as $\nu_p([p,\Qp])=1$  for the $p$-adic cyclotomic character  $\nu_p$); so,
$[\wt\gamma,\phi]$  acts trivially on the factor  $\Zp(1)$. 
Hence we still have the decomposition  $X'_n=Y'_n\oplus\Zp(1)$.
The restriction from  $X'_m\ra X'_n$  for  $m>n$  induces on  $\Zp(1)$  multiplication by  $p^{m-n}$  as  $\phi=[p,\Qp]^f$
acts trivially on  $\mu_{p^\infty}(\oQ_p)$.
Thus passing to the limit,
the factor  $\Zp(1)$  disappears.
Therefore, by Kummer theory,  $\Coker(X'\xra{\Res} X)=\Zp\oplus\Zp(1)$  if  $\mu_p(k)=\mu_p(\oQ_p)$  and otherwise  $\Zp$; so,
by definition, 
the restriction map  $Y'_m\ra Y'_n$  is onto, and 
its image passing to the limit is the augmentation ideal of  $\Zp[[\Gal(F/\Qp)]]$
(as we lose the augmentation quotient  $\Zp$  which corresponds to the factor  $\Zp$  in  $\Gal(L'_n/k_n)$).
Since  $\Ker(X'\ra X)$  is plainly  $(\phi-1)X'$,
we find that  $X'/(\phi-1)X'$  is isomorphic to the augmentation ideal of  $\Zp[[\Gal(F/\Qp)]]$  by (1).

The same argument works well when  $\mu_p(k)=\{1\}$.
In this case, the argument is easier as the factor  $\Zp(1)$  does not show up.

We prove (3).
Note that $\Zp[[\Gal(F/\Qp)]]=\bigoplus_\chi W(\kp_\chi)[[\Gamma]]$
for  $\chi$  running over all characters of  $\Gal(k/\Q)$,
where  $\kp_\chi$  is the finite field generated by the values of  $\chi\ \modulo p$
over  $\fb_p$.
Then its augmentation ideal is given by  $(\gamma-1)\Zp[[\Gamma]]\oplus\bigoplus_{\chi\ne1} W(\kp_\chi)[[\Gamma]]$.
Thus  $X'[\eta]/(\phi-1)X'[\eta]\cong W(\kp_\eta)[[\Gamma]]$  as  $W(\kp_\eta)[[\Gamma]]$-modules
by \propref{Iwprop} (2).
This is clear if  $\eta$  is non-trivial.
If  $\eta=1$,
we note that  $(\gamma-1)\Zp[[\Gamma]]\cong\Zp[[\Gamma]]$  as  $\Zp[[\Gamma]]$-modules.
So $X'[\eta]/(\phi-1)X'[\eta]$  is cyclic over  $W(\kp)[[\Gamma]]$.
By the Nakayama's lemma, we get the desired cyclicity of  $X'[\eta]$  over  $W(\kp)[[\Gamma\times\Upsilon]]$.
\end{proof}

\subsection{Proof of Theorem \ref{thm: app main} and Corollary \ref{cor: app1}}\label{pfBC}
Recall the  $\T_+$-GMA
$\E=\left(\begin{smallmatrix}\T_+&\T_-\\\T_-&\T_+\end{smallmatrix}\right)$  given in \propref{+rep}.
Set  $\E^\text{\rm nCM}=\E\otimes_{\T_+}\T^\text{\rm nCM}_+$  and  $\E^\text{\rm CM}=\E\otimes_{\T_+}\T^\text{\rm CM}_+$,  
and write  $\rhob:W[H]\ra\E$,  $\rhob^\text{\rm nCM}:W[H]\ra\E^\text{\rm nCM}$  
and    $\rhob^\text{\rm CM}:W[H]\ra\E^\text{\rm CM}$  
for the associated Cayley--Hamilton representations.
Pick a prime  $\wp$  of  $K(\rhob)$  above  $\pG$. 
Let  $\ov I_\pG$  (resp.  $\ov I_{\pG^*}$, $\ov D_\pG$)  
be the  $\pG$-inertia (resp.  $\pG^*$-inertia, $\pG$-decomposition) subgroup of  $\Gal(K(\rhob)/K(\orho))$
corresponding to  $\wp$  and  $\wp^c$.
Regard  $[p,\Qp]^f\in D_\pG$  for the residual degree  $f$  of  $\PG=\wp\cap K(\orho)$, 
and recall  $\vp':=\rhob([p,\Qp]^f)=\left(\begin{smallmatrix}u^{-f}&*\\0&u^f\end{smallmatrix}\right)$  with  $u^f\in\T_+$.  
Put $\Lambda_0:=\Zp[[T]]\subset\Lambda_1:=W_1[[T,a]]\subset\T$  for  $a=u^{2f}-1$, and recall  $t=1+T$.
We restate Theorem~A.1.2 in the introduction in the following way:
\begin{thm}\label{inertiathm}
Let the notation be as above.
Suppose  {\em(H0--4)}.
Then we can choose conjugacy classes of  $\ov I_\pG$  and  $\ov I_{\pG^*}$  
in  $G$  and a generator  $\Theta$  of the $\s$-different  $\Ib=\T(\s-1)\T$  with  $\Theta^\s=-\Theta$  so that we have  
$$
\rhob(\ov I_\pG)=\left\{\left(\begin{smallmatrix}a&b\\0&1\end{smallmatrix}\right)\big|a\in t^{\Zp}, 
b\in\Theta\Lambda_1\right\}\subset \E^\times
$$
and
$$
\rhob^\text{\rm nCM}(\ov I_\pG)=\left\{\left(\begin{smallmatrix}a&b\\0&1\end{smallmatrix}\right)\big|a\in t^{\Zp}, b\in\Theta\Lambda_1\right\}\subset \E^{\text{\rm nCM},\times}
$$
and  $\rho(\ov I_{\pG^*})= J\rho(\ov I_\pG)J^{-1}$,
where  $J=\left(\begin{smallmatrix}0&1\\1&0\end{smallmatrix}\right)$  for  $\rho=\rho_\T$
and  $\rho_{\T^\text{\rm nCM}}$.
Here  $t^{\Zp}\subset\Lambda$  is embedded in  $\E$  and  $\E^\text{\rm nCM}$  by the structure homomorphism.
\end{thm}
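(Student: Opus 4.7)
\subsection*{Proof plan for Theorem~\ref{inertiathm}}

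The starting point is Proposition~\ref{+rep}, which provides the GMA structure $\E=\left(\begin{smallmatrix}\T_+ & \T_- \\ \T_- & \T_+\end{smallmatrix}\right)$ and exhibits $\rhob=\rho_\T|_H$ as an ordinary Cayley--Hamilton representation. By $\pG$-ordinarity (using the reordering normalisation discussed in Proposition~\ref{+rep} together with the convention $\rho_\T(c)=J$ fixed at the end of \S\ref{modrep}), after choosing a suitable conjugate of the $\pG$-decomposition group we may present
$\rhob|_{\ov D_\pG}\sim\bigl(\begin{smallmatrix}\epsilon & \beta \\ 0 & \delta\end{smallmatrix}\bigr)$
with $\delta$ unramified. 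Restricting to wild inertia kills $\delta$ (so $\delta|_{\ov I_\pG}=1$), so $\rhob(\ov I_\pG)$ consists of matrices $\left(\begin{smallmatrix}\epsilon(\s) & b(\s) \\ 0 & 1\end{smallmatrix}\right)$, where $\epsilon:\ov I_\pG\to\T_+^\times$ is a character and $b:\ov I_\pG\to\T_-=\Theta\T_+$ is a $1$-cocycle twisted by $\epsilon/\delta=\epsilon$. The $\pG^*$-statement will then be immediate from the identity $\rho_\T(c\,\s\,c^{-1})=J\rho_\T(\s)J^{-1}$, so the work reduces to identifying the precise images of $\epsilon$ and $b$.

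For the diagonal, I would note that minimality of the deformation at all primes $l\mid N$ and the class-number assumptions (H3--4) force the wild ramification at $p$ to factor through the cyclotomic quotient: the only wild abelian $p$-extension of $\Q$ ramified only at $p$ is $\Q_\infty$. Hence $\epsilon|_{\ov I_\pG}$ factors through $\Gal(\Q_\infty/\Q)\cong\Zp$, and by the universal property of the weight Iwasawa algebra $\Lambda=W[[T]]=W[[1+p\Zp]]$ built into $\T$, the image is exactly $t^{\Zp}$ with $t=1+T$. This already pins down the semisimple decomposition $\ov I_\pG=\Uc\rtimes\Gal(\Q_\infty/\Q)$ claimed in the main text.

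For the unipotent part, I would write $b(\s)=\Theta\cdot b'(\s)$ with $b':\ov I_\pG\to\T_+$ (possible since $\T_-=\Theta\T_+$ by Theorem~\ref{TWinvthm}(3)). Conjugation by $\rhob(\phi)$ for $\phi\in\ov D_\pG$ lifting $[p,\Qp]^f$ multiplies the upper-right entry by $\epsilon(\phi)\delta(\phi)^{-1}=u^{-2f}=(1+a)^{-1}$, and conjugation by $\Gal(\Q_\infty/\Q)$ multiplies it by $t^{\pm1}$; therefore the $W_1$-module generated by $b(\ov I_\pG)$ is stable under multiplication by both $1+a$ and $t$, which yields the upper bound $b(\ov I_\pG)\subseteq\Theta\cdot W_1[[T,a]]=\Theta\Lambda_1$.

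The lower bound is the main obstacle: I must show that the cocycle class $[b]$ generates the full cyclic $\Lambda_1$-module $\Theta\Lambda_1$. Here the plan is to combine the local Iwasawa-theoretic cyclicity of $X'[\eta]$ (Proposition~\ref{Iwprop}(3)), the Selmer-group identification of Corollary~\ref{Ycor}, and the presentation $\T\cong\Lambda[[T_-]]/(T_-S_+)$: the class $[b]$ is, by the $R=\T$ construction, the universal $p$-ordinary cohomology class, and its image controls the tangent space $(\Theta)/(\Theta)^2\cong\T^\text{nCM}_+/(\theta)$, which after scalar extension to $\WG$ equals $\Lambda_\WG/(\lc_p^-(\psi^-))$. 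Any strict submodule of $\Theta\Lambda_1$ hit by $b$ would force the universal Selmer group to be smaller than Corollary~\ref{Ycor} permits, a contradiction. The statement for $\rhob^\text{nCM}$ follows at once from the $\T$-statement by passing to $\T^\text{nCM}=\T/(\T^\text{CM}\text{-factor})$, using that $\Theta$ remains a non-zero-divisor in $\T^\text{nCM}$ (Theorem~\ref{TWinvthm}(3)); and the $\pG^*$-identity is obtained by the $J$-conjugation recorded above.
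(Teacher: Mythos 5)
Your skeleton (GMA structure from Proposition~\ref{+rep}, decomposition of $\rhob(\ov I_\pG)$ into a diagonal torus part and a unipotent part $\Uc$, $J$-conjugation for $\pG^*$, reduction to $\rhob^{\text{nCM}}$) matches the paper, and the identification of the torus factor with $t^{\Zp}$ via the $\Lambda$-algebra structure is fine. The two substantive steps, however, both have gaps. For the claimed upper bound $b(\ov I_\pG)\subseteq\Theta\Lambda_1$: the adjoint action of $\vp$ and $\Gal(\Q_\infty/\Q)$ indeed makes the closure of $u(I_1)$ a $\Lambda_1$-submodule of $\T_-$, but that is all it gives --- being a $\Lambda_1$-module does \emph{not} mean being contained in the particular cyclic module $\Theta\Lambda_1$ (it could, a priori, be all of $\T_-$, which is much bigger than $\Theta\Lambda_1$ when $\T_+\ne\Lambda_1$). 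What is actually needed and what the paper proves is that $\Uc$ is \emph{cyclic} over $\Lambda_1$; this comes from Proposition~\ref{Iwprop}(3): the local Galois group $X'[\psi^-]$ is cyclic over $W_1[[\Gamma\times\Upsilon]]$, the action factors through $\Lambda_1$, and $X'[\psi^-]$ surjects onto $\Uc$. You cite Proposition~\ref{Iwprop}(3) but only as a vague ingredient in your ``lower bound,'' whereas it is in fact the essential input for what you call the upper bound; the conjugation-stability observation alone cannot replace it.

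For the identification of the generator (your ``lower bound''), the paper's argument is a concrete class-field-theory contradiction using (H4), and your sketch does not reproduce it. Concretely: if $\T_+^{\text{nCM}}\Uc\subsetneq\T_-$, the reduced cocycle $\ov u:=u\bmod\mG_+^{\text{nCM}}\T_-$ vanishes on $I_1$ (hence is unramified at $\pG$), is unramified at $\pG^*$ because $\ov I_{\pG^*}$ sits in the lower-triangular mirabolic, and is unramified at primes dividing $\frc'\frc'^c$ because $H(\Phi^-)$ ramifies only at $p$; so $\ov u$ cuts out a nontrivial everywhere-unramified $p$-abelian extension of $K(\psi^-)$ after quotienting by $\gamma-1$, contradicting $p\nmid h_{K(\psi^-)}$. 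Then $\T_+u(I_1)=\T_-$ by Nakayama, which together with cyclicity forces $\Uc=\Theta\Lambda_1$ for a suitable generator $\Theta$ of $\T_-$. Your plan instead appeals to the Selmer-group isomorphism of Corollary~\ref{Ycor} and says a strict submodule ``would force the universal Selmer group to be smaller,'' but no mechanism is given for why the image of the local cocycle $u(I_1)$ controls the global Selmer group in the required way, and in particular why it must contain a $\T_+$-generator of $\T_-$ rather than just a submodule with the right characteristic ideal. As written, the proposal identifies most of the relevant ingredients but does not assemble them into a correct argument, and (H4) --- the crucial hypothesis for the generator step --- never actually enters your reasoning.
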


\noindent
We can conjugate  $\rhob$  by  $\left(\begin{smallmatrix}a&0\\0&1\end{smallmatrix}\right)$  for any  $a\in\T^\times$,
and by doing this,  $\Theta$  will be replaced by  $a\Theta$; so, actually, 
we can always assume that for any choice of the generator  $\Theta$  with  $\Theta^\s=-\Theta$
of the ideal  $(\Theta)$,
we can arrange  $\rhob(\ov I_\pG)$  (and  $\rhob(\ov I_{\pG^*})$)  as in the corollary.
\begin{proof}
Write simply  $I=\rhob(\ov I_\pG)$  and  $D=\rhob(\ov D_\pG)$.
From the definition of  $\Lambda$-algebra structure of  $\T$  
and $p$-ordinarity (e.g., \cite[(Gal), pg.~604]{hida2015}),
we know  
$I\subset M(\T)\cap \E\ \text{ and }\ \rhob(\ov I_{\pG^*})\subset JM(\T)J^{-1}\cap \E$
for the mirabolic subgroup  $M(\T):=\left\{\left(\begin{smallmatrix}a&b\\0&1\end{smallmatrix}\right)\big|a\in\T^\times, b\in\T\right\}$.
Since  $\Gal(\Qp^{ab}/\Qp)=[p,\Qp]^{\wh\Z}\ltimes\Zp^\times$  for the maximal abelian extension  $\Q^{ab}/\Q$  and the local Artin symbol  $[p,\Qp]$,
we find  
$$I\subset\left\{\left(\begin{smallmatrix}a&b\\0&1\end{smallmatrix}\right)\big|a\in t^{\Zp}, b\in\Theta\T_+\right\}\
\text{ and }\ D={\vp'}^{\Zp}\ltimes I$$
by the shape of  $\E$, and  $\det(\rhob(\ov I_\pG))=\Tc:=t^{\Zp}\subset\Lambda_0^\times$.
Thus we have an extension  $1\ra\Uc\ra I\ra\Tc\ra1$  with  $\Uc=\Ker(\det(\rhob):I\ra\Lambda^\times)$.

By \cite[Lem.~1.4]{hida2015},
this extension is split by the action of  $\Delta$    
for  $\Uc$  being an eigenspace on which  $\Delta$  acts by  $\psi^-$; 
so, we may assume to have a section  $s:\Tc\hra I$  identifying  $\Tc$  with
$\left\{\left(\begin{smallmatrix}a&0\\0&1\end{smallmatrix}\right)\big|a\in\Tc\right\}$.
Replacing  $\vp'$  by an element  $\vp\in\vp'\Uc$,
we may assume that  $\vp=\left(\begin{smallmatrix}u^{-f}&0\\0&u^f\end{smallmatrix}\right)$
commuting with  $\left(\begin{smallmatrix}t^{\Zp}&0\\0&1\end{smallmatrix}\right)=\Gal(\Q_{p,\infty}K(\orho)/K(\orho))$.
Take  $\phi\in D_\pG$  such that  $\rhob(\phi)=\vp$  
and  $\wt\gamma\in D_\pG$  with  $\rhob(\wt\gamma)=\left(\begin{smallmatrix}t&0\\0&1\end{smallmatrix}\right)$.
For the commutator  $[\phi,\wt\gamma]$,
we have  $\rhob([\phi,\wt\gamma])=1$ (i.e. it acts on  $K(\rhob)_\PG$  trivially; the requirement for
the validity of \propref{Iwprop} (3)).
The module  $\Uc$  is a $\Lambda_1$-module by the adjoint action of  $\Tc\cdot\vp^{\Zp}$.
Since  $\rhob^\text{\rm CM}|_I$  has kernel  $\Uc$,
we see that  $I=\rhob(\ov I_\pG)\cong\rhob^\text{\rm nCM}(\ov I_\pG)$; so, we only need to prove the assertion for  $\rhob$.
If  $\T_+^\text{\rm nCM}\cdot\Uc\subsetneq\T_-=\Theta\T_+=\Theta\T_+^\text{\rm nCM}$,
we have  $\Uc\T_+^\text{\rm nCM}\subset\Theta\mG^\text{\rm nCM}_+\T_+^\text{\rm nCM}=\mG_+^\text{\rm nCM}\T_-$  
for the maximal ideal  $\mG_+^\text{\rm nCM}$  of  $\T_+^\text{\rm nCM}$.

Write  $\PG|\pG$  for the prime factor in  $K(\psi^-)$  corresponding to $I_\pG$.
We apply \propref{Iwprop} to the  $\PG$-adic completion  $k$  of  $K(\psi^-)$, its cyclotomic $\Zp$-extension  $F$
and the composite  $\fc_\infty$  of  $F$  and the unramified $\Zp$-extension of  $k$.
Thus  $\Uc$  is made of unipotent matrices, and writing  
$$I_1:=\{\tau\in\ov I_\pG:\tau|_F=1\}=\{\tau\in\ov I_\pG:\tau|_{\fc_\infty}=1\},$$ 
we have  $\Uc=\rhob(I_1)$.
Therefore we may write 
$\rhob(\tau)=\left(\begin{smallmatrix}1&u(\tau)\\0&1\end{smallmatrix}\right)$  for  $\tau\in I_1$.
Let  $\ov u:=u\mod\mG_+^\text{\rm nCM}\T_-$  with values in  $\T_-/\mG_+^\text{\rm nCM}\T_-\cong\fb$.
Let  $H(\Phi^-):=\Ker(\Phi^-:H\ra \Lambda^\times)$  for the universal character  $\Phi$.
Since  $\T_-/\theta\T_-=Y_\infty^-(\psi^-)\otimes_{\Zp[\psi^-]}W$  by \corref{Ycor}
and  $\theta\T_+$  is the ideal of reducibility in  $\T_+$  of  $\rhob$  
in the sense of \cite[\S1.5]{BC2009},
this homomorphism extends to a non-zero homomorphism  $\ov u:H(\Phi^-)\ra\fb$  with  $\ov u(\tau h\tau^{-1})=\Phi^-(\tau)\ov u(h)$  unramified outside  $\pG$
over  $K(\Phi^-)=K(\orho)K^-_\infty$.
Since  $H(\Phi^-):=\Gal(K(\orho)^{(p)}/K(\Phi^-))$  only ramifies at  $p$,  $u$  is unramified at  ${\frc'}{\frc'}^c$.
Since  $\ov I_{\pG^*}$  is lower triangular contained in  $JM(\T)J^{-1}$,
$\ov u$  is unramified everywhere.
Let  $N_\infty\subset K(\orho)^{(p)}$  be the fixed field by  $\Ker(u:\Gal(K(\orho)^{(p)}/K(\Phi^-))\ra\T_-/\mG_+^\text{\rm nCM}\T_-)$  
and put  $X:=\Gal(N_\infty/K(\Phi^-))$.
Then  $N_\infty/K(\Phi^-)$  is an everywhere unramified $p$-abelian extension.
Since  $K(\Phi^-)/K(\psi^-)$  is a fully $p$-ramified $\Zp$-extension generated by an element  $\gamma$,
we find  $X/(\gamma-1)X$  is a Galois group of an everywhere unramified $p$-abelian extension of  $K(\psi^-)$,
which is non-trivial by our assumption.
Since  $p\nmid h_{K(\psi^-)}$,
this is a contradiction.
Thus the  $\T_+$-span of  $\ov u(I_1)$  is  $\fb$; so,
the  $\T_+$-span of  $u(I_1)$  is equal to  $\T_-$  by Nakayama's lemma.
Thus  $\T_+u(I_1)\not\equiv0\mod\mG_+^\text{\rm nCM}\T_-$; so,
we may assume that  $\Theta\in u(I_1)$.

Regard  $\psi^-$  as an abelian irreducible $\Zp$-representation acting on  $W$  regarded as a $\Zp$-module.
By \propref{Iwprop} (3), under the notation there,
the Galois group  $X'[\psi^-]$  is cyclic over  $W_1[[\Gamma\times\Upsilon]]$  ($\Gamma=t^{\Zp}$) 
and surjects onto  $\Uc$.
Since the action of  $W_1[[\Gamma\times\Upsilon]]$  factors through  $\Lambda_1$,
by \propref{Iwprop} (3),  $\Uc$  is cyclic over  $\Lambda_1$; so,
we have  $\Uc\cong\Lambda_1$.
Thus we conclude
$\rhob(I_1)=\Uc=\left\{\left(\begin{smallmatrix}1&a\\0&1\end{smallmatrix}\right)\big|a\in\Theta\Lambda_1\right\}$  
inside  $\rhob(H)$
(for a suitable choice of  $\Theta$).
This shows the desired expression for  $\rhob(\ov I_\pG)$.
By the same argument applied to  $\pG^*$,
we have  $\rhob(H)$  contains  $J\Uc J^{-1}$, $\Tc$  and  $J\Tc J^{-1}$,
and we obtain the form of  $\rhob(\ov I_{\pG^*})$.
\end{proof}

\medskip\noindent
{\bf Proof of Corollary~\ref{cor: app1}.} 
We have by \thmref{inertiathm},
$$\text{$\rho_P|_{I_p}$  is indecomposable $\Leftrightarrow(\Uc\ \modulo P)\ne1\Leftrightarrow P\nmid(\Theta)$.}$$
By \corref{Ycor}, $\T^\text{\rm nCM}/(\Theta)\wot_W \WG\cong\Lambda_\WG/(\lc^-_p(\psi^-))$,
we conclude that  $P\nmid(\Theta)\Leftrightarrow P\nmid(\lc^-_p(\psi^-))$.
By $\Coker(\T\wot_W \WG\ra\T^\text{\rm CM}\wot_W \WG\times\T^\text{\rm nCM}\wot_W \WG)\cong\Lambda_\WG/(\lc^-_p(\psi^-))$,
we see 
$$P\nmid(\lc^-_p(\psi^-))\Leftrightarrow P\not\in\Spec(\T^\text{\rm nCM})\cap\Spec(\T^\text{\rm CM})$$
as desired.\qed



\markleft{FRANCESC CASTELLA AND CARL WANG-ERICKSON}
\markright{CLASS GROUPS AND LOCAL INDECOMPOSABILITY FOR NON-CM FORMS}

\bibliographystyle{alpha}
\bibliography{CWEbib-2018-Gb}

\newcommand{\etalchar}[1]{$^{#1}$}
\def\cprime{$'$} \def\Dbar{\leavevmode\lower.6ex\hbox to 0pt{\hskip-.23ex
  \accent"16\hss}D} \def\cfac#1{\ifmmode\setbox7\hbox{$\accent"5E#1$}\else
  \setbox7\hbox{\accent"5E#1}\penalty 10000\relax\fi\raise 1\ht7
  \hbox{\lower1.15ex\hbox to 1\wd7{\hss\accent"13\hss}}\penalty 10000
  \hskip-1\wd7\penalty 10000\box7}
  \def\cftil#1{\ifmmode\setbox7\hbox{$\accent"5E#1$}\else
  \setbox7\hbox{\accent"5E#1}\penalty 10000\relax\fi\raise 1\ht7
  \hbox{\lower1.15ex\hbox to 1\wd7{\hss\accent"7E\hss}}\penalty 10000
  \hskip-1\wd7\penalty 10000\box7} \def\Dbar{\leavevmode\lower.6ex\hbox to
  0pt{\hskip-.23ex \accent"16\hss}D}
  \def\cfac#1{\ifmmode\setbox7\hbox{$\accent"5E#1$}\else
  \setbox7\hbox{\accent"5E#1}\penalty 10000\relax\fi\raise 1\ht7
  \hbox{\lower1.15ex\hbox to 1\wd7{\hss\accent"13\hss}}\penalty 10000
  \hskip-1\wd7\penalty 10000\box7}
  \def\cftil#1{\ifmmode\setbox7\hbox{$\accent"5E#1$}\else
  \setbox7\hbox{\accent"5E#1}\penalty 10000\relax\fi\raise 1\ht7
  \hbox{\lower1.15ex\hbox to 1\wd7{\hss\accent"7E\hss}}\penalty 10000
  \hskip-1\wd7\penalty 10000\box7}
\begin{thebibliography}{BCG{\etalchar{+}}19}

\bibitem[BC09]{BC2009}
Jo{\"e}l Bella{\"{\i}}che and Ga{\"e}tan Chenevier.
\newblock Families of {G}alois representations and {S}elmer groups.
\newblock {\em Ast\'erisque}, (324):xii+314, 2009.

\bibitem[BCG{\etalchar{+}}19]{BCGKPST2017}
F.~Bleher, T.~Chinburg, R.~Greenberg, M.~Kakde, G.~Pappas, R.~Sharifi, and
  M.~Taylor.
\newblock Higher {C}hern classes in {I}wasawa theory.
\newblock \emph{Amer. J. Math.}, to appear, 2019.

\bibitem[BE10]{Breuil-Emerton}
Christophe Breuil and Matthew Emerton.
\newblock Repr\'esentations {$p$}-adiques ordinaires de {${\rm GL}_2(\bold
  Q_p)$} et compatibilit\'e local-global.
\newblock {\em Ast\'erisque}, (331):255--315, 2010.

\bibitem[BT99]{Buzzard-Taylor}
Kevin Buzzard and Richard Taylor.
\newblock Companion forms and weight one forms.
\newblock {\em Ann. of Math. (2)}, 149(3):905--919, 1999.

\bibitem[Buz03]{Buzzard-JAMS}
Kevin Buzzard.
\newblock Analytic continuation of overconvergent eigenforms.
\newblock {\em J. Amer. Math. Soc.}, 16(1):29--55, 2003.

\bibitem[Che14]{chen2014}
Ga\"{e}tan Chenevier.
\newblock The $p$-adic analytic space of pseudocharacters of a profinite group,
  and pseudorepresentations over arbitrary rings.
\newblock In {\em Automorphic Forms and {G}alois Representations: {V}ol.\ {I}},
  volume 414 of {\em London Mathematical Society Lecture Note Series}, pages
  221--285. Cambridge Univ. Press, Cambridge, 2014.

\bibitem[Col96]{coleman-classical}
Robert~F. Coleman.
\newblock Classical and overconvergent modular forms.
\newblock {\em Invent. Math.}, 124(1-3):215--241, 1996.

\bibitem[DFG04]{DFG2004}
Fred Diamond, Matthias Flach, and Li~Guo.
\newblock The {T}amagawa number conjecture of adjoint motives of modular forms.
\newblock {\em Ann. Sci. \'{E}cole Norm. Sup. (4)}, 37(5):663--727, 2004.

\bibitem[DHI98]{doi-hida-ishii}
Koji Doi, Haruzo Hida, and Hidenori Ishii.
\newblock Discriminant of {H}ecke fields and twisted adjoint {$L$}-values for
  {$\rm GL(2)$}.
\newblock {\em Invent. Math.}, 134(3):547--577, 1998.

\bibitem[Dia96]{diamond1996}
Fred Diamond.
\newblock On deformation rings and {H}ecke rings.
\newblock {\em Ann. of Math. (2)}, 144(1):137--166, 1996.

\bibitem[Dia97]{diamond1997}
Fred Diamond.
\newblock The {T}aylor-{W}iles construction and multiplicity one.
\newblock {\em Invent. Math.}, 128(2):379--391, 1997.

\bibitem[dS87]{deShalit}
Ehud de~Shalit.
\newblock {\em Iwasawa theory of elliptic curves with complex multiplication},
  volume~3 of {\em Perspectives in Mathematics}.
\newblock Academic Press, Inc., Boston, MA, 1987.
\newblock $p$-adic $L$ functions.

\bibitem[DW18]{DW2018}
Shaunak Deo and Gabor Wiese.
\newblock Dihedral universal deformations.
\newblock arXiv:1805.05438v2 [math.NT], 2018.

\bibitem[Eme97]{emerton2004}
Matthew Emerton.
\newblock A $p$-adic variational {H}odge conjecture and modular forms with
  complex multiplication.
\newblock Available at
  \url{http://www.math.uchicago.edu/~emerton/pdffiles/cm.pdf}, 1997.

\bibitem[EPW06]{EPW2006}
Matthew Emerton, Robert Pollack, and Tom Weston.
\newblock Variation of {I}wasawa invariants in {H}ida families.
\newblock {\em Invent. Math.}, 163(3):523--580, 2006.

\bibitem[Fin06]{finis2006}
Tobias Finis.
\newblock The {$\mu$}-invariant of anticyclotomic {$L$}-functions of imaginary
  quadratic fields.
\newblock {\em J. Reine Angew. Math.}, 596:131--152, 2006.

\bibitem[Gha04]{ghate2004}
Eknath Ghate.
\newblock On the local behavior of ordinary modular {G}alois representations.
\newblock In {\em Modular curves and abelian varieties}, volume 224 of {\em
  Progr. Math.}, pages 105--124. Birkh\"auser, Basel, 2004.

\bibitem[Gha05]{ghate2005}
Eknath Ghate.
\newblock Ordinary forms and their local {G}alois representations.
\newblock In {\em Algebra and number theory}, pages 226--242. Hindustan Book
  Agency, Delhi, 2005.

\bibitem[Gre01]{greenberg2001}
Ralph Greenberg.
\newblock Iwasawa theory---past and present.
\newblock In {\em Class field theory---its centenary and prospect ({T}okyo,
  1998)}, volume~30 of {\em Adv. Stud. Pure Math.}, pages 335--385. Math. Soc.
  Japan, Tokyo, 2001.

\bibitem[GV04]{GV2004}
Eknath Ghate and Vinayak Vatsal.
\newblock On the local behaviour of ordinary {$\Lambda$}-adic representations.
\newblock {\em Ann. Inst. Fourier (Grenoble)}, 54(7):2143--2162 (2005), 2004.

\bibitem[GV11]{GV2011}
Eknath Ghate and Vinayak Vatsal.
\newblock Locally indecomposable {G}alois representations.
\newblock {\em Canad. J. Math.}, 63(2):277--297, 2011.

\bibitem[GV18]{GV2018}
S.~Galatius and A.~Venkatesh.
\newblock Derived {G}alois deformation rings.
\newblock {\em Adv. Math.}, 327:470--623, 2018.

\bibitem[Hid85]{hida1985}
Haruzo Hida.
\newblock Congruences of cusp forms and {H}ecke algebras.
\newblock In {\em S\'{e}minaire de th\'{e}orie des nombres, {P}aris 1983--84},
  volume~59 of {\em Progr. Math.}, pages 133--146. Birkh\"{a}user Boston,
  Boston, MA, 1985.

\bibitem[Hid86a]{hida1986}
Haruzo Hida.
\newblock Galois representations into {${\rm GL}_2({\bf Z}_p[[X]])$} attached
  to ordinary cusp forms.
\newblock {\em Invent. Math.}, 85(3):545--613, 1986.

\bibitem[Hid86b]{hida1986a}
Haruzo Hida.
\newblock Iwasawa modules attached to congruences of cusp forms.
\newblock {\em Ann. Sci. \'Ecole Norm. Sup. (4)}, 19(2):231--273, 1986.

\bibitem[Hid00]{hida-MFG}
Haruzo Hida.
\newblock {\em Modular forms and {G}alois cohomology}, volume~69 of {\em
  Cambridge Studies in Advanced Mathematics}.
\newblock Cambridge University Press, Cambridge, 2000.

\bibitem[Hid06]{hida-HMF}
Haruzo Hida.
\newblock {\em Hilbert modular forms and {I}wasawa theory}.
\newblock Oxford Mathematical Monographs. The Clarendon Press, Oxford
  University Press, Oxford, 2006.

\bibitem[Hid09]{hida-quadratic}
Haruzo Hida.
\newblock Quadratic exercises in {I}wasawa theory.
\newblock {\em Int. Math. Res. Not. IMRN}, (5):912--952, 2009.

\bibitem[Hid10]{hida2010}
Haruzo Hida.
\newblock The {I}wasawa {$\mu$}-invariant of {$p$}-adic {H}ecke
  {$L$}-functions.
\newblock {\em Ann. of Math. (2)}, 172(1):41--137, 2010.

\bibitem[Hid13]{hidaJAMS2013}
Haruzo Hida.
\newblock Local indecomposability of {T}ate modules of non-{CM} abelian
  varieties with real multiplication.
\newblock {\em J. Amer. Math. Soc.}, 26(3):853--877, 2013.

\bibitem[Hid15]{hida2015}
Haruzo Hida.
\newblock Big {G}alois representations and {$p$}-adic {$L$}-functions.
\newblock {\em Compos. Math.}, 151(4):603--664, 2015.

\bibitem[Hid18a]{hida-cyclicity}
Haruzo Hida.
\newblock Anticyclotomic cyclicity conjecture.
\newblock Preprint. Available at
  \url{http://www.math.ucla.edu/~hida/AntCv9.pdf}, 2018.

\bibitem[Hid18b]{hida-adjoint}
Haruzo Hida.
\newblock Cyclicity of adjoint {S}elmer groups and fundamental units.
\newblock To appear in \textit{Development of Iwasawa theory - The Centennial
  of {K}.\ {I}wasawa's Birth}. Available at
  \url{http://www.math.ucla.edu/~hida/TWrealv8.pdf}, 2018.

\bibitem[Hid19]{hida2019-ord-real}
Haruzo Hida.
\newblock The universal ordinary deformation ring associated to a real
  quadratic field.
\newblock Preprint. Available at \url{http://www.math.ucla.edu/~hida/IndR.pdf},
  2019.

\bibitem[HT93]{HT-ENS}
H.~Hida and J.~Tilouine.
\newblock Anti-cyclotomic {K}atz {$p$}-adic {$L$}-functions and congruence
  modules.
\newblock {\em Ann. Sci. \'Ecole Norm. Sup. (4)}, 26(2):189--259, 1993.

\bibitem[HT94]{HT1994}
H.~Hida and J.~Tilouine.
\newblock On the anticyclotomic main conjecture for {CM} fields.
\newblock {\em Invent. Math.}, 117(1):89--147, 1994.

\bibitem[Iwa73]{iwasawa1973}
Kenkichi Iwasawa.
\newblock On {${\bf Z}_{l}$}-extensions of algebraic number fields.
\newblock {\em Ann. of Math. (2)}, 98:246--326, 1973.

\bibitem[Kat78]{katz-CM}
Nicholas~M. Katz.
\newblock {$p$}-adic {$L$}-functions for {CM} fields.
\newblock {\em Invent. Math.}, 49(3):199--297, 1978.

\bibitem[Lim12]{lim2012}
Meng~Fai Lim.
\newblock Poitou-{T}ate duality over extensions of global fields.
\newblock {\em J. Number Theory}, 132(11):2636--2672, 2012.

\bibitem[LS13]{LS2013}
Meng~Fai Lim and Romyar~T. Sharifi.
\newblock Nekov\'a\v r duality over {$p$}-adic {L}ie extensions of global
  fields.
\newblock {\em Doc. Math.}, 18:621--678, 2013.

\bibitem[Maz89]{mazur1989}
B.~Mazur.
\newblock Deforming {G}alois representations.
\newblock In {\em Galois groups over {${\bf Q}$} ({B}erkeley, {CA}, 1987)},
  volume~16 of {\em Math. Sci. Res. Inst. Publ.}, pages 385--437. Springer, New
  York, 1989.

\bibitem[MT90]{MT}
B.~Mazur and J.~Tilouine.
\newblock Repr\'esentations galoisiennes, diff\'erentielles de {K}\"ahler et
  ``conjectures principales''.
\newblock {\em Inst. Hautes \'Etudes Sci. Publ. Math.}, (71):65--103, 1990.

\bibitem[Nek06]{nekovar2006}
Jan Nekov{\'a}{\v{r}}.
\newblock Selmer complexes.
\newblock {\em Ast\'erisque}, (310):viii+559, 2006.

\bibitem[Rub91]{rubin1991}
Karl Rubin.
\newblock The ``main conjectures'' of {I}wasawa theory for imaginary quadratic
  fields.
\newblock {\em Invent. Math.}, 103(1):25--68, 1991.

\bibitem[Rub94]{rubin1994}
Karl Rubin.
\newblock More ``main conjectures'' for imaginary quadratic fields.
\newblock In {\em Elliptic curves and related topics}, volume~4 of {\em CRM
  Proc. Lecture Notes}, pages 23--28. Amer. Math. Soc., Providence, RI, 1994.

\bibitem[Ser68]{serre1968}
Jean-Pierre Serre.
\newblock {\em Abelian {$l$}-adic representations and elliptic curves}.
\newblock McGill University lecture notes written with the collaboration of
  Willem Kuyk and John Labute. W. A. Benjamin, Inc., New York-Amsterdam, 1968.

\bibitem[Shi71]{shimura-IAT}
Goro Shimura.
\newblock {\em Introduction to the arithmetic theory of automorphic functions}.
\newblock Publications of the Mathematical Society of Japan, No. 11. Iwanami
  Shoten, Publishers, Tokyo; Princeton University Press, Princeton, N.J., 1971.
\newblock Kan\^{o} Memorial Lectures, No. 1.

\bibitem[Til88]{tilouine1989-CM}
Jacques Tilouine.
\newblock Th\'{e}orie d'{I}wasawa classique et de l'alg\`ebre de {H}ecke
  ordinaire.
\newblock {\em Compositio Math.}, 65(3):265--320, 1988.

\bibitem[Til89]{tilouine1989}
J.~Tilouine.
\newblock Sur la conjecture principale anticyclotomique.
\newblock {\em Duke Math. J.}, 59(3):629--673, 1989.

\bibitem[WE18]{WE2018}
Carl Wang-Erickson.
\newblock Algebraic families of {G}alois representations and potentially
  semi-stable pseudodeformation rings.
\newblock {\em Math. Ann.}, 371(3-4):1615--1681, 2018.

\bibitem[Wil95]{wiles1995}
Andrew Wiles.
\newblock Modular elliptic curves and {F}ermat's last theorem.
\newblock {\em Ann. of Math. (2)}, 141(3):443--551, 1995.

\bibitem[WWE17]{WWE2}
Preston Wake and Carl Wang-Erickson.
\newblock Ordinary pseudorepresentations and modular forms.
\newblock {\em Proc. Amer. Math. Soc. Ser. B}, 4:53--71, 2017.

\bibitem[WWE18]{WWE1}
Preston Wake and Carl Wang-Erickson.
\newblock Pseudo-modularity and {I}wasawa theory.
\newblock {\em Amer. J. Math.}, 140(4):977--1040, 2018.

\bibitem[Zha14]{zhao}
Bin Zhao.
\newblock Local indecomposability of {H}ilbert modular {G}alois
  representations.
\newblock {\em Ann. Inst. Fourier (Grenoble)}, 64(4):1521--1560, 2014.

\end{thebibliography}
\end{document}